\pdfoutput=1
\documentclass[a4paper,english]{jnsao}
\usepackage[utf8]{inputenc}
\usepackage{enumitem}

\renewcommand{\phi}{\varphi}
\newcommand{\N}{\mathbb{N}}

\newcommand{\R}{\mathbb{R}}
\newcommand{\calF}{\mathcal{F}}
\newcommand{\calG}{\mathcal{G}}
\newcommand{\calE}{\mathcal{E}}

\newcommand{\calS}{\mathcal{S}}

\newcommand{\calM}{{\mathcal{M}}}
\newcommand{\calO}{{\mathcal{O}}}
\newcommand{\scalprod}[1]{\langle #1 \rangle}
\newcommand{\abs}[1]{| #1 |}
\newcommand{\norm}[1]{\| #1 \|}
\newcommand{\set}[2]{\left\{#1:#2\right\}}

\DeclareMathOperator{\sign}{\mathrm{sign}}
\DeclareMathOperator{\dom}{\mathrm{dom}}
\DeclareMathOperator{\ran}{\mathrm{ran}}
\DeclareMathOperator{\Id}{\mathrm{Id}}
\DeclareMathOperator{\co}{\mathrm{co}}
\DeclareMathOperator*{\argmin}{\mathrm{arg\,min}}

\newcommand{\prox}{\mathrm{prox}}

\DeclareMathOperator{\divergence}{div}
\DeclareMathOperator{\grad}{grad}

\DeclareMathOperator{\extR}{\overline{\R}}
\newcommand{\defgl}{\mathrel{\mathop:}=}

\newcommand{\dd}{{\mathrm d}}
\renewcommand{\u}{{\bar u}}

\usepackage[centercolon]{mathtools}

\usepackage{siunitx}
\sisetup{round-mode=places,round-precision=1, exponent-product=\cdot}
\usepackage{booktabs}

\usepackage{graphicx}
\usepackage{subcaption}
\usepackage{siunitx}
\usepackage{tikz,pgfplots}
\pgfplotsset{compat=newest}
\pgfplotsset{plot coordinates/math parser=false}
\usetikzlibrary{arrows.meta}
\usetikzlibrary{plotmarks}
\usepgfplotslibrary{polar}

\usepackage[nameinlink,capitalize]{cleveref}

\manuscripteprinttype{arxiv}
\manuscripteprint{2108.10077}
\date{2021-08-23}
\manuscriptlicense{}
\manuscriptcopyright{}

\title{Convex relaxation of discrete vector-valued optimization problems}
\author{Christian Clason\thanks{Faculty of Mathematics, Universit\"at Duisburg-Essen, 45117 Essen, Germany; \emph{current address:} Institute of Mathematics and Scientific Computing, University of Graz, Heinrichstrasse 36, 8010 Graz, Austria (\email{c.clason@uni-graz.at}, \orcid{0000-0002-9948-8426})}
    \and
    Carla Tameling\thanks{Institute for Mathematical Stochastics, Universit\"at G\"ottingen, Goldschmidtstr.~7,
    37077 G\"ottingen, Germany (\email{carla.tameling@mathematik.uni-goettingen.de})}
    \and
    Benedikt Wirth\thanks{Applied Mathematics, Universit\"at M\"unster, Einsteinstr.~62,
    48149 M\"unster, Germany (\email{benedikt.wirth@uni-muenster.de}, \orcid{0000-0003-0393-1938})}
}
\shortauthor{Clason, Tameling, Wirth}

\AtBeginDocument{
    \hypersetup{
        pdftitle={Convex relaxation of discrete vector-valued optimization problems},
        pdfauthor={Christian Clason, Carla Tameling, Benedikt Wirth},
        pdfkeywords={multibang control, convex relaxation, Bloch equation, linear elasticity, branched transport, semismooth Newton method}
    }
}

\acknowledgments{{CC} was supported by the German Science Fund (DFG) under grant CL 487/1-1. CT gratefully acknowledges support by the DFG Research Training Group 2088 Project A1. BW is supported by the German Science Fund (DFG) under the priority program SPP 1962, grant WI 4654/1-1, and under Germany's Excellence Strategy EXC 2044 -- 390685587, Mathematics M\"unster: Dynamics--Geometry--Structure.}

\begin{document}

\maketitle

\begin{abstract}
    We consider a class of infinite-dimensional optimization problems in which a distributed vector-valued variable should pointwise almost everywhere take values from a given finite set $\mathcal{M}\subset\mathbb{R}^m$. Such hybrid discrete--continuous problems occur in, e.g., topology optimization or medical imaging and are challenging due to their lack of weak lower semicontinuity.
    To circumvent this difficulty, we introduce as a regularization term a convex integral functional with an integrand that has a polyhedral epigraph with vertices corresponding to the values of $\mathcal{M}$; similar to the $L^1$ norm in sparse regularization, this ``vector multibang penalty'' promotes solutions with the desired structure while allowing the use of tools from convex optimization for the analysis as well as the numerical solution of the resulting problem.

    We show well-posedness of the regularized problem and analyze stability properties of its solution in a general setting. We then illustrate the approach for three specific model optimization problems of broader interest:
    optimal control of the Bloch equation, optimal control of an elastic deformation, and a multimaterial branched transport problem. In the first two cases, we derive explicit characterizations of the penalty and its generalized derivatives for a concrete class of sets $\mathcal{M}$. For the third case, we discuss the algorithmic computation of these derivatives for general sets. These derivatives are then used in a superlinearly convergent semismooth Newton method applied to a sequence of regularized optimization problems.

    We illustrate the behavior of this approach for the three model problems with numerical examples.
\end{abstract}

\section{Introduction}\label{sec:introduction}

Many optimization problems involve minimizing the distance of a quantity $S(u)$ to some given $z$, where $u$ is the optimization variable and $S(u)$ denotes the output of some model depending on $u$.
This may arise either from an optimal control problem, in which we try to choose the control $u$ such that the state $y=S(u)$ -- commonly the solution to a differential equation -- comes close to a desired state $z$,
or from an inverse problem, in which a measurement $z$ has been obtained via a forward operator $S$ from a physical configuration $u$, which we try to recover.
Typically $u$ is from an infinite-dimensional function space.
To make the problem well-posed, a regularization usually has to be incorporated, which encodes some a priori knowledge or requirement of $u$.
Such a priori knowledge could for instance be the fact that $u$ pointwise almost everywhere takes values only from a prescribed finite set $\calM\subset\R^m$ for some $m\in\N$,
which is the situation we will focus on.
Examples include topology optimization, where the spatial material composition of a (mechanical) structure is optimized
and in which $\calM$ comprises the material parameters of the available material components,
or inverse problems in which the spatial distribution of a few known materials (or, in medical imaging, tissues with known properties) has to be identified.
Our goal is to achieve this using a \emph{convex} regularization so that we can apply elegant and powerful tools from convex optimization for its analysis and numerical solution.

Specifically, in this work we consider the optimization problem
\begin{equation}\label{eq:optControlPb}
    \min_{u \in U} \frac12\norm{S(u) - z}_Y^2 + \int_{\Omega} g(u(x))\,\dd x,
\end{equation}
where $\Omega \subset \R^n$ is an open bounded domain, $U=L^2(\Omega;\R^m)$ for some $m\geq 2$, $Y$ is a Hilbert space, $z\in Y$, $S\colon U \to Y$ is a compact and Fr\'{e}chet differentiable (possibly nonlinear) operator, and the pointwise \emph{vector multibang penalty} $g \colon \R^m \to \R \cup \left\{ \infty\right\}$ has a convex polyhedral epigraph and superlinear growth at infinity. This extends the class of scalar problems considered in \cite{CK:2013,CK:2015} to the vector-valued case, and our main interest in this article is the behavior and influence of this vector multibang penalty on the solution, which we will study by way of examples for three different operators $S$ (the solution operators of the Bloch equation and of linearized elasticity as well as the graph divergence for a branched transport model) and specific costs $g$ (whose graph is given by a polyhedral cone, a square frustum, and a more general polyhedron in $\R^{m+1}$, respectively).
The basic underlying intuition for our specific choice of the term $\int_{\Omega} g(u(x))\,\dd x$ is that this regularization in combination with a quadratic discrepancy term increasingly promotes values of $u$ on lower-dimensional facets and, in particular, at the vertices of the graph of $g$, since the linear growth away from a vertex will lead to a comparatively greater increase in the penalty than the corresponding decrease in the discrepancy term. The same mechanism is responsible for the sparsity-promoting property (i.e., the preference for $u=0$) of $L^1$ regularization; it is also related to the fact that in linear optimization, minima are always found at a vertex of the polyhedral feasible set.
The central idea of our approach is to design the penalty $g$ such that these vertices correspond precisely to the elements of the set $\calM$, which we will make more precise in the following.

\paragraph{Motivation}
Formulating the original optimal control or inverse problem directly over the set of discrete-valued desired solutions leads to the minimization of the energy
\begin{equation*}
    \calE^\calM(u)= \frac12\norm{S(u) - z}_Y^2 + \int_{\Omega} \delta_\calM(u(x))\,\dd x
    \qquad\text{with }
    \delta_\calM(v)=\begin{cases}0&\text{if }v\in\calM,\\\infty&\text{otherwise.}\end{cases}
\end{equation*}
Unfortunately, $\calE^\calM$ is not weakly lower semicontinuous \cite[Cor.~2.14]{Braides:2002}
so that the problem is ill-posed (unless the inverse operator $S^{-1}$ is compact into $L^1(\Omega;\R^m)$,
in which case the energy is strongly coercive in $L^1(\Omega;\R^m)$ and one would only require strong lower semicontinuity):
generically there are no minimizers, and controls $u$ with small energy $\calE^\calM(u)$ will rapidly oscillate between different values in $\calM$.
There are (at least) two possible ways out:
\begin{enumerate}[label=(\roman*)]
    \item
        The first approach adds a penalty of variations of $u$, for instance the total variation seminorm $\norm{u}_{TV}=\int_\Omega\dd|\nabla u|$ or a Mumford--Shah-type regularization functional,
        which has the effect of preventing oscillations and penalizing the interfaces between regions of different values of $u$.
        A disadvantage of this approach is that it quite explicitly regularizes the geometry of the material distribution, which is the sought quantity.
        For instance, such a regularization will lead to rounded-off interfaces that cannot have corners.
    \item
        The second approach considers instead the relaxation (i.e., the lower semi-continuous envelope) of $\calE^\calM$,
        thereby admitting also mixed control values $u(x)\notin\calM$ that represent mixtures of values in $\calM$.
        This is an obvious disadvantage; however, it might be alleviated by adding a convex (to ensure weak lower semicontinuity) cost $\int_\Omega c(u(x))\,\dd x$ that may, for instance, encode a known preference for a certain material.
        If this is done before relaxation, then mixed control values will no longer have equal costs to pure control values so that the relaxation may again lead to pure control values $u(x)\in\calM$. This has for instance been observed in \cite{CK:2013}.
\end{enumerate}
The additional cost regularization of the latter approach acts on the material amounts rather than the geometry of their distribution
and therefore is worthwhile studying as an alternative to the standard regularization via penalization of interfaces.
Specifically, the relaxation of $\int_\Omega \delta_\calM(u(x))\,\dd x+\alpha \int_\Omega c(u(x))\,\dd x$ for some $\alpha >0$ and $c:\R^m\to\R$ nonnegative, strictly convex, and lower semicontinuous, is given by $\int_\Omega g(u(x))\,\dd x$ with
\begin{equation}\label{eq:reg}
    g=g_\infty^{**}\qquad\text{for }g_\infty:=\alpha c+\delta_\calM,
\end{equation}
where the double asterisk denotes the biconjugate or convex envelope.
The functions $g$ are precisely those with a convex polyhedral epigraph (since this epigraph is the convex hull of the finitely many points $\{(v,\alpha c(v))):v\in\calM\}$, and any function $g$ with convex polyhedral epigraph can be obtained via $c=g/\alpha$ and an appropriate choice of $\calM$), which motivates our problem formulation \eqref{eq:optControlPb}.
While our theoretical results will hold for any such choice of $c$, the explicit computation of $g$ and the numerical solution will be carried out as in \cite{CIK:2014,CK:2013,CK:2015} mostly for the two specific choices
\begin{equation*}
    c(v)=\frac12|v|_2^2 \qquad\text{and}\qquad c(v) = |v|_2.
\end{equation*}

In the case of a scalar function $u$ (i.e., for $m=1$) and the first choice of $c$, this optimization problem reduces to the one considered in \cite{CK:2013};
the difference in the vector-valued case is that now several (or even all) values in $\calM$ can be assigned the same control cost,
therefore allowing for multiple equally preferred discrete values.
Providing explicit and numerically implementable characterizations of the required generalized derivatives is one of the main contributions of this work. Furthermore, we provide an extended analysis of the stability and multibang properties of the optimal controls in the general case.

\paragraph{Model problems}
To illustrate the broad applicability of the proposed approach, we consider as specific examples three different forward operators $S$ and admissible sets $\calM$
(the analysis in \crefrange{sec:existence}{sec:penalty} will be independent of these models, though, beyond some general assumptions).

The first example follows \cite{Spincontrol:15}, where the authors try to drive a collection of spin systems using external electromagnetic fields to a desired spin state in the context of NMR spectroscopy or tomography.
The hardware here only allows a discrete set of control values (the radiofrequency pulse phases and amplitudes).
The underlying model is given by the Bloch equation in a rotating reference frame without relaxation (see \cite{Epstein:2006} for an introduction), which relates the magnetization vector $\mathbf M:[0,T]\to\R^3$ and the applied magnetic field $\mathbf B:[0,T]\to \R^3$ via the bilinear differential equation
\begin{equation*}
    \frac{\dd}{\dd t}{\mathbf{M}}(t) = \mathbf{M}(t) \times \mathbf B(t),
    \qquad\mathbf{M}(0)=\mathbf{M}_0.
\end{equation*}
The goal is to shift the magnetization vector from the initial state $\mathbf{M}_0$ (e.g., aligned to a strong external field) to a desired state $\mathbf{M}_d$ (e.g., orthogonal to the external field) at time $T$.
The control $u\in L^2((0,T);\R^2)$ enters the equation as $\mathbf{B}(t) = (u_1(t),u_2(t),\omega)$, where $\omega$ is a fixed resonance frequency (which coincides with the rotation frequency of the domain),
and thus the (nonlinear) operator $S$ maps the control $u$ onto the magnetization vector $\mathbf M(T)$ at time $T$.
For details, see \cref{sec:BlochStateEq}.

The second example deals with linearized elasticity as the most basic model of coupled PDEs as state equations, i.e., we consider $S$ to be the solution operator of the elliptic problem
\begin{equation*}
    \left\{\begin{aligned}
            -2\mu \divergence \epsilon(y) - \lambda \grad \divergence y &= u \text{ in } \Omega, \\
            y &= 0 \text{ on } \Gamma, \\
            (2\mu \epsilon(y) + \lambda \divergence y) n &= 0 \text{ on } \partial\Omega\setminus\Gamma,
    \end{aligned}\right.
\end{equation*}
with distributed control $u$;
see \cref{sec:elasticityStateEq} for details.

The third example illustrates applications to more general variational problems and concerns multimaterial branched transport as introduced in \cite{MaMaTi17}.
Here, different materials have to be transported through a street or pipe network, where for each material $i$ the amount $m_i$ has to be routed from its source $x_i$ to its sink $y_i$.
The flux along a street or pipe is described by the vector-valued function $u$ defined on the network,
where $u_i$ describes the flux of material $i$ with its sign indicating the flow direction.
To avoid an uneconomic splitting of each material, $\calM$ should only contain vectors corresponding to combinations of (positive or negative, depending on direction) fluxes $m_i$ of different materials.
The cost $c$ may in addition favor certain combinations over others (e.g., if joint transport of two materials is particularly economic).
The operator $S$ here describes the divergence of the flux,
and the deviation of $Su$ from $z=\sum_im_ie_i(\delta_{x_i}-\delta_{y_i})$
(with $e_i$ the $i$th standard unit vector and $\delta_x$ the Dirac measure at $x$) is penalized to avoid material loss.

\bigskip

Regarding the admissible set $\calM$, we consider for the case of the Bloch equation -- again following \cite{Spincontrol:15} -- radially distributed control values together with the origin, i.e.,
\begin{equation*}
    \calM = \left\{ \begin{pmatrix} 0 \\ 0 \end{pmatrix}, \begin{pmatrix} \omega_0 \cos\theta_1 \\ \omega_0 \sin \theta_1 \end{pmatrix}, \dots, \begin{pmatrix} \omega_0 \cos\theta_M \\ \omega_0 \sin \theta_M \end{pmatrix}\right\}
\end{equation*}
for a fixed amplitude $\omega_0>0$ and $M>2$ equidistributed phases
\begin{equation*}
    0\leq\theta_1<\cdots<\theta_M<2\pi.
\end{equation*}
In this example, all admissible control values apart from $0$ have the same magnitude; it also provides a link to classical sparsity promotion and allows a closed-form treatment of an arbitrary number of such states.

For the case of linearized elasticity, we consider in addition an admissible set containing control values of different magnitudes but not the origin. As an example, we make the concrete choice
\begin{equation*}
    \calM = \left\{ \begin{pmatrix} 1 \\ 1 \end{pmatrix}, \begin{pmatrix} 1 \\ -1 \end{pmatrix}, \begin{pmatrix} -1 \\ 1 \end{pmatrix}, \begin{pmatrix} -1 \\ -1 \end{pmatrix}, \begin{pmatrix} 2 \\ 2 \end{pmatrix}, \begin{pmatrix} 2 \\ -2 \end{pmatrix}, \begin{pmatrix} -2 \\ 2 \end{pmatrix}, \begin{pmatrix} -2 \\ -2 \end{pmatrix}\right\} .
\end{equation*}

For multimaterial branched transport, the admissible control values are
\begin{equation*}
    \calM = \set{ u\in\R^m}{u_i\in\{0,m_i\}\text{ for }i=1,\ldots,m\text{ or }u_i\in\{0,-m_i\}\text{ for }i=1,\ldots,m},
\end{equation*}
with $m_1,\ldots,m_m>0$ fixed material amounts.
Note that $\calM$ only contains vectors with either all nonnegative or all nonpositive components;
components with opposite sign would represent different materials flowing in opposite directions,
for which there is no economic preference.

Beyond illustrating the general procedure, these examples are meant to be useful prototypes that should be directly applicable.

\paragraph{Related work}
Convex relaxation of problems lacking weak lower semicontinuity has a long history; here we only mention the monograph \cite{Ekeland:1999a}. In the context of optimal control of partial differential equations, convex relaxation of discrete control constraints was discussed in \cite{CK:2013,CK:2015,CKK:2017,CKT:2019}; the latter two treating controls in the principal coefficient -- leading to challenges related to homogenization theory -- in combination with total variation regularization required to overcome these challenges. In the context of inverse problems, the use of the scalar multibang penalty as a regularization term was investigated in \cite{CD:2017,TramDo}; multibang regularization was applied to different imaging problems in \cite{HR:2020,WL:2021}. Error estimates for the finite element approximation of such problems can be found in \cite{CDP:2018}. A similar convex relaxation approach was applied to optimal controls with switching structure in \cite{CIK:2014}.
Special cases were treated much earlier for scalar controls. In particular, if $\calM$ contains only two points, problem \eqref{eq:optControlPb} coincides with a (regularized) bang-bang control problem; see, e.g., \cite{bergounioux_optimality_1996,tiba_optimal_1990,troltzsch_minimum_1979}. For $\calM=\{0\}$, the relaxation reduces to the well-known $L^1$ norm used to promote sparse controls; see, e.g., \cite{stadler_elliptic_2007,casas_optimality_2012,ito_optimal_2014}.

There is a vast literature concerning pulse design in magnetic resonance imaging and spectroscopy via optimal control of the Bloch equation, e.g.,
\cite{conolly_optimal_1986,smith_solvent-suppression_1991,rosenfeld_design_1996,khaneja_optimal_2005,xu_designing_2008,gershenzon_design_2009,grissom_fast_2009,skinner_new_2012}. A mathematical treatment of this problem can be found in, e.g., \cite{bonnard_geometric_2014}. Numerical methods for the computation of optimal pulses are based on conjugate gradient methods (see, e.g., \cite{mao_selective_1986}), Krotov methods \cite{vinding_fast_2012}, quasi-Newton and Newton methods with approximate second derivatives \cite{anand_designing_2012}, and Newton methods using exact second derivatives computed via the adjoint approach \cite{Aigner:2015} (which was also the basis of the winning approaches in the 2015 ISMRM RF Pulse Design Challenge \cite{ISMRMChallenge}). The latter is the basis for the numerical treatment in this work.
To the best of our knowledge, there is so far only a very limited number of works dealing with the design of discrete-valued pulses, which is of interest since the hardware often allows only a finite set $\calM$ of pulses \cite{cruickshank_kilowatt_2009,prigl_theoretical_1996}.
In \cite{Spincontrol:15}, this problem is treated via an extension of the approach from \cite{khaneja_optimal_2005} together with a quantization of a continuous control field obtained via standard optimization methods.

The interest in branched transport as a nonconvex version of optimal transport arose during the past two decades, and
the textbook \cite{BeCaMo09} can serve as a comprehensive starting point into the theory.
The multimaterial version that we consider here was introduced in \cite{MaMaTi17} as a convexification of the original branched transport problem.
So far this approach has numerically only been exploited by computing dual certificates for solutions to particular types of branched transport problems \cite{MaOuVe17}.

\paragraph{Organization}
\Cref{sec:existence} provides the abstract convex analysis framework, including existence of solutions of the optimal control problem, necessary optimality conditions, as well as an appropriate regularization for numerical purposes.
\Cref{sec:stability} then derives stability results based on rather general assumptions on the state operator and the multibang penalty.
\Cref{sec:penalty} gives an explicit characterization of the convex analysis framework for the specific examples of the multibang penalty used in this work, while \cref{sec:stateEq} gives more detail about the model state equations and, in particular, verifies for them the previously exploited assumptions.
\Cref{sec:semiSmoothNewton} discusses the numerical solution using a semismooth Newton method. Finally, \cref{sec:examples} presents and discusses illustrative numerical examples for the three model problems.

\section{Convex analysis framework}\label{sec:existence}

To obtain existence of minimizers and numerically feasible optimality conditions, we follow the general framework of \cite{CK:2015} (stated there for the scalar case), which we briefly summarize in this section and adapt to the vector-valued case. We refer to, e.g., \cite{Bauschke:2011,Schirotzek:2007,Clarke:2013} as well as \cite{CV:2020} for a general introduction to nonsmooth analysis and optimization.
Recall that $U=L^2(\Omega;\R^m)$ for some bounded open domain $\Omega\subset\R^n$ and $m\geq 2$, $Y$ is a Hilbert space, and
\begin{align*}
    \calF&\colon U \to \R \cup \left\{ \infty \right\}, \quad u \mapsto \tfrac12\norm{S(u) - z}_Y^2,\\
    \calG&\colon U \to \R \cup \left\{ \infty \right\}, \quad u \mapsto \textstyle\int_{\Omega} g(u(x))\, \dd x
\end{align*}
for $g:\R^m\to\R\cup\{\infty\}$ proper, convex, lower semicontinuous with $\dom g = \co\calM$ (the convex hull of $\calM$) for some finite set $\calM\subset\R^m$. Relating properties of the integrand $g$ to the corresponding integral functional $\calG$ will be crucial in what follows.
For the operator $S$ we will require the following assumptions:
\begin{enumerate}[label=(\textsc{a}\arabic*),ref={\normalfont(\textsc{a}\arabic*)}]
    \item\label{enm:weakWeakContinuity}
        Weak-to-weak continuity, i.e.,\quad
        $u_i\rightharpoonup u\text{ in }U
        \quad\Rightarrow\quad
        S(u_i)\rightharpoonup S(u)\text{ in }Y$.
    \item\label{enm:FrechetDifferentiability}
        Fr\'echet differentiability.
\end{enumerate}
Throughout, we identify the dual space $U^*$ with $U$ via the Riesz isomorphism.

We now consider the problem
\begin{equation}\label{eq:formal_prob}
    \min_{u\in U} \calE(u)\quad\text{ for }\calE(u):=\calF(u)+\calG(u).
\end{equation}
The following statements are analogous to \cite[Props.~2.1 and 2.2]{CK:2015} for the vector-valued case.

\begin{proposition}[existence of minimizers] \label{thm:existence}
    Let $S$ satisfy \ref{enm:weakWeakContinuity}. Then there exists a solution $\bar u\in U$ to \eqref{eq:formal_prob}.
\end{proposition}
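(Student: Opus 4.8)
The plan is to apply the direct method of the calculus of variations. First I would check that $\calE$ is proper and bounded below: since $\calM$ is a finite set, $\co\calM$ is a compact convex subset of $\R^m$, so $g$ is bounded on its domain and hence $\calG$ takes values in a bounded interval on functions with $u(x)\in\co\calM$ a.e.; together with $\calF\geq 0$ this gives $\inf_{u\in U}\calE(u)=:E^*\in\R$ (finiteness also uses that $\calG$ is finite at, e.g., any constant function with value in $\calM$, so the infimum is not $+\infty$). Take a minimizing sequence $(u_i)\subset U$ with $\calE(u_i)\to E^*$; without loss of generality $u_i\in\dom\calG$, so $u_i(x)\in\co\calM$ a.e.\ for every $i$. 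Because $\co\calM$ is bounded, $(u_i)$ is bounded in $U=L^2(\Omega;\R^m)$, so after passing to a subsequence (not relabeled) $u_i\rightharpoonup\bar u$ weakly in $U$ for some $\bar u\in U$.

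Next I would pass to the limit in each term separately. For $\calF$: by assumption \ref{enm:weakWeakContinuity}, $S(u_i)\rightharpoonup S(\bar u)$ weakly in $Y$, hence $S(u_i)-z\rightharpoonup S(\bar u)-z$, and the squared norm $\tfrac12\|\cdot\|_Y^2$ is convex and continuous, therefore weakly lower semicontinuous, so $\calF(\bar u)\leq\liminf_i\calF(u_i)$. For $\calG$: since $g$ is proper, convex, and lower semicontinuous, the integral functional $\calG$ is convex, and it is strongly lower semicontinuous on $L^2$ (this is the standard result that a normal convex integrand with a summable lower bound induces an l.s.c.\ integral functional — here the lower bound on $\co\calM$ is a constant); a convex strongly l.s.c.\ functional is weakly l.s.c., so $\calG(\bar u)\leq\liminf_i\calG(u_i)$. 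Combining, $\calE(\bar u)\leq\liminf_i\calE(u_i)=E^*$, and since $E^*$ is the infimum we get $\calE(\bar u)=E^*$, i.e., $\bar u$ is a minimizer. (In particular $\bar u\in\dom\calG$, so $\bar u(x)\in\co\calM$ a.e.)

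The only delicate point is the weak lower semicontinuity of $\calG$; everything else is routine. There are two standard routes: either invoke a theorem on integral functionals with convex normal integrands (e.g.\ from \cite{Ekeland:1999a} or the references on convex analysis cited above), or argue directly — convexity of $g$ makes $\calG$ convex on $U$, and convex functionals that are strongly sequentially l.s.c.\ are weakly sequentially l.s.c.\ by Mazur's lemma, so it suffices to establish strong $L^2$-lower semicontinuity of $\calG$, which follows from Fatou's lemma after extracting a further (a.e.-convergent) subsequence along which the integrands are bounded below by the constant $\min_{\co\calM} g$. Since this is precisely the vector-valued analogue of \cite[Props.~2.1 and 2.2]{CK:2015}, I would keep the presentation brief and point to that reference for the details that are identical to the scalar case.
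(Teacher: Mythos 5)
Your proposal is correct and follows essentially the same route as the paper's proof: direct method, boundedness of the minimizing sequence because $\dom g=\co\calM$ is bounded, weak-to-weak continuity of $S$ combined with weak lower semicontinuity of $\norm{\cdot}_Y^2$, and weak lower semicontinuity of $\calG$ from convexity of $g$. You merely spell out the last step (Mazur plus Fatou) that the paper asserts in one line, which is a harmless elaboration.
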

\begin{proof}
    Consider a minimizing sequence $\{u_i\}_{i\in\N}$.
    Since $g$ is infinite outside of $\co\calM$, we know that $\|u_i\|_{L^\infty(\Omega)}$ is uniformly bounded so that we may extract a subsequence, again denoted by $\{u_i\}_{i\in\N}$, weakly converging in $U$ to some $\bar u\in U$.
    Now $\int_\Omega g(u(x))\,\dd x$ is sequentially weakly lower semicontinuous by the convexity of $g$,
    while property \ref{enm:weakWeakContinuity} implies weak convergence $S(u_i)\rightharpoonup S(\bar u)$ so that
    \begin{equation*}
        \frac12\norm{S(\bar u) - z}_Y^2 + \int_{\Omega} g(\bar u(x))\,\dd x
        \leq\liminf_{i\to\infty}\frac12\norm{S(u_i) - z}_Y^2 + \int_{\Omega} g(u_i(x))\,\dd x.
    \end{equation*}
    Hence $\bar u$ must be a minimizer.
\end{proof}

\begin{proposition}[optimality conditions] \label{thm:optCond}
    Let $S$ satisfy \ref{enm:FrechetDifferentiability} and let $\bar u\in U$ be a local minimizer of \eqref{eq:formal_prob}. Then there exists a $\bar p \in U$ satisfying
    \begin{equation}
        \label{eq:optsys}
        \left\{\begin{aligned}
                -\bar p &= \calF'(\bar u)=S'(\bar u)^*(S(\bar u)-z),\\
                \bar u &\in \partial\calG^*(\bar p),
        \end{aligned}\right.
    \end{equation}
    where $S'(u)^*:Y\to U$ denotes the (Hilbert-space) adjoint of the Fr\'echet derivative of $S:U\to Y$,
    \begin{equation*}
        \calG^*: U \to\R\cup\{\infty\},\qquad p \mapsto \sup_{u\in U}\, \langle p,u \rangle - \calG(u),
    \end{equation*}
    denotes the Fenchel conjugate of $\calG$, and $\partial\calG^*$ denotes its convex subdifferential.
\end{proposition}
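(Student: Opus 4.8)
The plan is to run the standard first-order argument for minimizing the sum of a smooth (but possibly nonconvex) functional $\calF$ and a convex functional $\calG$, and then to dualize the resulting subgradient inclusion. Two ingredients are essentially free. First, $\calE$ is proper: taking $u\equiv v_0$ for any $v_0\in\co\calM=\dom g$ gives $\calG(u)=|\Omega|\,g(v_0)<\infty$, and $\calF\geq0$; hence a local minimizer $\bar u$ of the proper functional $\calE$ necessarily satisfies $\calE(\bar u)<\infty$, so in particular $\calG(\bar u)<\infty$. Second, $\calF$ is Fréchet differentiable by \ref{enm:FrechetDifferentiability} together with the smoothness of $y\mapsto\tfrac12\norm{y-z}_Y^2$, and the chain rule gives $\calF'(\bar u)h=\scalprod{S(\bar u)-z,S'(\bar u)h}_Y=\scalprod{S'(\bar u)^*(S(\bar u)-z),h}$, which under the Riesz identification $U^*\cong U$ is exactly $\calF'(\bar u)=S'(\bar u)^*(S(\bar u)-z)$, i.e.\ the first equation in \eqref{eq:optsys} with $\bar p:=-\calF'(\bar u)$.

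The core step is the variational inequality. Fix an arbitrary $v\in U$. For $t\in(0,1]$, convexity of $g$ yields $\calG(\bar u+t(v-\bar u))\leq(1-t)\calG(\bar u)+t\calG(v)$, and for $t$ sufficiently small (depending on $v$) local optimality of $\bar u$ gives $\calF(\bar u+t(v-\bar u))+\calG(\bar u+t(v-\bar u))\geq\calF(\bar u)+\calG(\bar u)$. Combining these, subtracting $\calF(\bar u)+\calG(\bar u)$, dividing by $t$, and letting $t\to0^+$ using the Fréchet differentiability of $\calF$ gives $\scalprod{\calF'(\bar u),v-\bar u}+\calG(v)-\calG(\bar u)\geq0$. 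Since $v\in U$ was arbitrary, this is precisely the statement $\bar p=-\calF'(\bar u)\in\partial\calG(\bar u)$.

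It remains to pass from $\bar p\in\partial\calG(\bar u)$ to $\bar u\in\partial\calG^*(\bar p)$. Since $g$ is proper, convex and lower semicontinuous, the integral functional $\calG$ is proper, convex and lower semicontinuous on $U$ (lower semicontinuity follows either from the weak lower semicontinuity already established in the proof of \cref{thm:existence}, or directly from Fatou's lemma, using that $g$ is bounded below on the compact set $\co\calM$ and $+\infty$ elsewhere). Therefore the Fenchel--Young equality $\calG(\bar u)+\calG^*(\bar p)=\scalprod{\bar p,\bar u}$ holds, and this is equivalent both to $\bar p\in\partial\calG(\bar u)$ and to $\bar u\in\partial\calG^*(\bar p)$, which gives the second inclusion of \eqref{eq:optsys} and finishes the proof.

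I do not expect a genuine obstacle here; the statement is an adaptation of the scalar result. The only points requiring a little care are: verifying that $\calG$ is proper (so that $\bar u$ indeed has finite energy and the convex-combination argument is non-vacuous) and lower semicontinuous (so that $\partial\calG^*$ can be related back to $\partial\calG$ via Fenchel--Young), and observing that the convexity/limit argument must be carried out for each fixed $v$ with a $v$-dependent step size $t$, since $\calF$ is in general not convex and only a \emph{local} minimizer is assumed.
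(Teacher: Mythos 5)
Your proposal is correct and follows essentially the same route as the paper's proof: the difference quotient along $u_t=\bar u+t(v-\bar u)$, the convexity bound on $\calG(u_t)$, the limit $t\to0^+$ yielding $\bar p=-\calF'(\bar u)\in\partial\calG(\bar u)$, and the passage to $\bar u\in\partial\calG^*(\bar p)$ via Fenchel--Young. The only additions are your explicit checks that $\calG$ is proper and lower semicontinuous and the remark about the $v$-dependent step size for a merely local minimizer, which the paper leaves implicit.
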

\begin{proof}
    Abbreviate $u_t=\bar u+t(u-\bar u)$ for arbitrary $t>0$ and $u\in U$.
    Due to the optimality of $\bar u$ we have
    \begin{equation*}
        0\leq[\calF(u_t)+\calG(u_t)]-[\calF(\bar u)+\calG(\bar u)].
    \end{equation*}
    Dividing by $t$ and rearranging, we arrive at
    \begin{equation*}
        0\leq\frac{\calF(u_t)-\calF(\bar u)}t+\frac{\calG(u_t)-\calG(\bar u)}t
        \leq\frac{\calF(u_t)-\calF(\bar u)}t+\frac{(1-t)\calG(\bar u)+t\calG(u)-\calG(\bar u)}t,
    \end{equation*}
    where in the second inequality we used the convexity of $\calG$.
    Taking the limit $t\to0$ and setting $\bar p=-\calF'(\bar u)$, we arrive at
    \begin{equation*}
        0\leq\langle-\bar p,u-\bar u\rangle+\calG(u)-\calG(\bar u).
    \end{equation*}
    As this holds for all $u\in U$, we have by definition of the convex subdifferential that $\bar p\in\partial\calG(\bar u)$.
    By the Fenchel--Young Lemma (e.g., \cite[Lem.~5.8]{CV:2020}), this is equivalent to $\bar u\in\partial\calG^*(\bar p)$.
\end{proof}

By, e.g., \cite[Thms.~4.11 and 5.5]{CV:2020}, we have the pointwise a.e.~expression
\begin{equation}\label{eq:subdiff_pointwise}
    \partial\calG^*(p)=\{u\in U:u(x)\in\partial g^*(p(x))\text{ for a.e.~}x\in\Omega\},
\end{equation}
where $g^*$ is the Fenchel conjugate of $g$.
It is readily seen that for $g$ chosen as in \eqref{eq:reg}, $g^*$ is piecewise affine and thus $\partial g^*$ is single valued in each affine region, the values being precisely the elements of $\calM$ (see \cref{sec:penalty}).
More precisely, for each $u\in\calM$ there is an open convex polyhedron $Q(u)\subset\R^m$ such that $\R^m=\bigcup_{u\in\calM}\overline{Q(u)}$ and $\partial g^*(q)=\{u\}$ for all $q\in Q(u)$.
This property suggests that solutions to \eqref{eq:optsys} generically satisfy $u\in\calM$ almost everywhere, which will be exploited in \cref{sec:stability} to derive corresponding stability properties of optimal controls.

\bigskip

Our goal is now to solve \eqref{eq:optsys} using a semismooth Newton method in function spaces; see \cite{Kunisch:2008a,Ulbrich:2011} as well as \cite[Chap.~14]{CV:2020}.
This requires constructing a so-called Newton derivative that, used in place of the non-existing Fr\'echet derivative of $\partial\calG^*$ in a Newton step for \eqref{eq:optsys}, will lead to local superlinear convergence; see, e.g., \cite[Thm.~14.1]{CV:2020}.
This is challenging in general; however, we know by, e.g., \cite[Thm.~14.10]{CV:2020} that if (and only if) $r>s$, then a superposition operator $H:L^r(\Omega,\R^m)\to L^s(\Omega;\R^m)$ given by $p(x)\mapsto h(p(x))$ for a.e.~$x\in \Omega$ and locally Lipschitz continuous $h:\R^m\to\R^m$ is Newton differentiable with Newton derivative $D_N H(p)$ given pointwise a.e.~by an arbitrary element of the Clarke subdifferential
\begin{equation}\label{eq:newtonderiv}
    \partial_C h(q) = \co \left\{\lim_{n\to\infty} \nabla h(q_n)\right\},
\end{equation}
where $\{q_n\}_{n\in\N}\subset \R^m$ is a sequence of points where $h$ is differentiable with $q_n\to q$; such a sequence always exists in finite dimensions by Rademacher's Theorem; see, e.g., \cite[Thm.~13.26]{CV:2020}.

Since the second relation in \eqref{eq:optsys} involves a set-valued mapping, we first need to apply a regularization.
Here we replace the subdifferential $\partial\calG^*(p)$ by its Yosida approximation
\begin{equation}
    (\partial\calG^*)_\gamma(p) \defgl \frac1\gamma\left(p - \prox_{\gamma\calG^*}(p)\right): U \to U
\end{equation}
for some $\gamma>0$ and the proximal mapping
\begin{equation}
    \prox_{\gamma\calG^*}(p) \defgl \left(\Id + \gamma\partial\calG\right)^{-1}(p) = \argmin_{\tilde p\in U}\frac1{2\gamma}\norm{\tilde p-p}_{U}^2 + \calG^*(\tilde p),
\end{equation}
which is single-valued and Lipschitz continuous since $\calG^*$ is convex and lower semicontinuous; see, e.g., \cite[Thm.~6.11 and Cor.~6.14]{CV:2020}.
We thus consider instead of \eqref{eq:optsys} for $\gamma>0$ the regularized optimality conditions
\begin{equation}\label{eq:regsystem}
    \left\{\begin{aligned}
            -p_\gamma &= \calF'(u_\gamma),\\
            u_\gamma &= (\partial\calG^*)_\gamma(p_\gamma).
    \end{aligned}\right.
\end{equation}
By \eqref{eq:subdiff_pointwise}, we can characterize $\prox_{\gamma\calG^*}$ and therefore $H_\gamma:=(\partial\calG^*)_\gamma$ pointwise a.e.~as well; we will derive explicit pointwise expressions for the Yosida approximation and its Newton derivative for different choices of the finite set $\calM$ in \cref{sec:penalty}. Furthermore, we will argue in \cref{sec:semiSmoothNewton} that $H_\gamma$ and hence \eqref{eq:optsys} is in fact Newton differentiable, thereby guaranteeing local superlinear convergence of the corresponding semismooth Newton method.

To see the relation of \eqref{eq:optsys} to \eqref{eq:formal_prob}, we first note that the Yosida approximation $(\partial\calG^*)_\gamma$ is linked to the Moreau envelope
\begin{equation}
    (\calG^*)_\gamma(p)=\min_{\tilde p\in U}\frac1{2\gamma}\norm{\tilde p-p}_{U}^2+\calG^*(\tilde p)
\end{equation}
via $(\partial \calG^*)_\gamma = \partial(\calG^*)_\gamma$;
see, e.g.,
\cite[Thm.~7.9]{CV:2020}, which justifies the term \emph{Moreau--Yosida regularization} (of $\calG^*$). Furthermore, from
\cite[Thm.~7.11]{CV:2020}, we have that
\begin{equation}
    ((\calG^*)_\gamma)^*(u) = \calG(u) + \frac\gamma2\norm{u}_U^2
\end{equation}
and, hence, \eqref{eq:regsystem} coincides with the necessary and sufficient optimality conditions for the strictly convex minimization problem
\begin{equation}
    \label{eq:problem_reg}
    \min_{u\in U}\calE_\gamma(u)\quad\text{ for }\calE_\gamma(u)= \calF(u) + \calG(u) + \frac\gamma2 \norm{u}_{U}^2.
\end{equation}
By the same arguments as in the proof of \cref{thm:existence}, we obtain the existence of a minimizer $u_\gamma\in U$ and thus of a corresponding $p_\gamma = -\calF(u_\gamma)\in U$.

\begin{remark}
    An alternative regularization leading to Newton differentiability is to instead apply the Yosida approximation to the equivalent subdifferential inclusion $\bar p\in\partial\calG(\bar u)$ in \eqref{eq:optsys}. This would correspond to replacing $\calG$ in \eqref{eq:formal_prob} with its (Fr\'echet-differentiable) Moreau envelope $\calG_\gamma:u\mapsto \min_{\tilde u\in U} \frac1{2\gamma}\norm{\tilde u-u}_U^2+\calG(\tilde u)$, thus smoothing out the nondifferentiability that is responsible for the structural properties of the penalty.
    In contrast, our regularization does not remove the nondifferentiability but merely makes the functional (more) strongly convex so that the structural features of the multibang regularization are preserved.
\end{remark}

We now address convergence of solutions to \eqref{eq:problem_reg} as $\gamma\to 0$. The following statement is a slight generalization of \cite[Prop.~4.1]{CK:2015}.
We here prove it by $\Gamma$-convergence (see \cite{Braides:2002} for a gentle introduction),
which is a classical technique to check whether the solution of a perturbed optimization problem converges, as the perturbation tends to zero, to the solution of the unperturbed problem.
The term ``perturbation'' may here be interpreted in a broad sense;
in the subsequent statement it is used in the sense of a so-called singular perturbation
(where the optimization problem depends on a small model or regularization parameter that approaches zero),
but in the next section it will represent perturbations of the target state or measurement $z$
(and it could for instance just as well represent discretizations of a continuous optimization problem).
If a sequence $\calE_n$ of energies $\Gamma$-converges to some energy $\calE$
(which means that for any sequence $u_n\to u$ we have $\calE(u)\leq\liminf_{n\to\infty}\calE_n(u_n)$
and that for every $u$ there exists a so-called recovery sequence $u_n\to u$ with $\limsup_{n\to\infty}\calE_n(u_n)\leq\calE(u)$)
and if the $\calE_n$ are uniformly coercive (or just boundedness of $\calE_n(u_n)$ implies precompactness of the sequence $u_n$),
then minimizers of $\calE_n$ are known to converge (up to subsequences) to minimizers of $\calE$.

\begin{proposition}[limit for vanishing regularization]
    Let $S$ satisfy \ref{enm:weakWeakContinuity}. Then it holds that $\Gamma\text{-}\lim_{\gamma\to0}\calE_\gamma=\calE$ with respect to weak convergence in $U$.
    As a consequence, any sequence $u_{\gamma_n}$ of global minimizers to \eqref{eq:problem_reg} for $\gamma_n\to0$ contains a subsequence converging weakly in $U$ to a global minimizer of \eqref{eq:formal_prob}.
    Moreover, this convergence is strong.
\end{proposition}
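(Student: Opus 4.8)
The plan is to prove the stated $\Gamma$-convergence directly from the definition and then to read off the convergence of minimizers from the fundamental theorem of $\Gamma$-convergence (cf.\ the discussion preceding the statement and \cite{Braides:2002}), upgrading weak to strong convergence by a separate Hilbert-space argument. Since on the $L^\infty$-bounded sets where everything relevant lives the weak topology of $U$ is metrizable, it suffices to argue with sequences. For the $\liminf$ inequality, let $\gamma_n\to0$ and $u_{\gamma_n}\rightharpoonup u$ in $U$; then $\calF$ is weakly lower semicontinuous (by \ref{enm:weakWeakContinuity} we have $S(u_{\gamma_n})\rightharpoonup S(u)$, and the Hilbert norm is weakly lsc), $\calG$ is weakly lsc by convexity of $g$ exactly as in the proof of \cref{thm:existence}, and $\tfrac{\gamma_n}2\|u_{\gamma_n}\|_U^2\ge0$, so that $\liminf_n\calE_{\gamma_n}(u_{\gamma_n})\ge\liminf_n\bigl(\calF(u_{\gamma_n})+\calG(u_{\gamma_n})\bigr)\ge\calF(u)+\calG(u)=\calE(u)$. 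For the $\limsup$ inequality I would use the constant recovery sequence $u_{\gamma_n}\equiv u$: if $\calE(u)=\infty$ there is nothing to prove, and otherwise $u\in\dom\calG$ forces $u(x)\in\co\calM$ a.e., hence $u\in L^\infty(\Omega;\R^m)\subset U$ with $\|u\|_U<\infty$ (as $\co\calM$ and $\Omega$ are bounded), so $\calE_{\gamma_n}(u)=\calE(u)+\tfrac{\gamma_n}2\|u\|_U^2\to\calE(u)$. This gives $\Gamma\text{-}\lim_{\gamma\to0}\calE_\gamma=\calE$.

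It remains to verify equi-coercivity. Fixing any $\tilde u\in\dom\calG$ (e.g.\ a constant function equal to a vertex of $\co\calM$), a global minimizer $u_{\gamma_n}$ of $\calE_{\gamma_n}$ satisfies $0\le\calG(u_{\gamma_n})\le\calE_{\gamma_n}(u_{\gamma_n})\le\calE_{\gamma_n}(\tilde u)\le C$ for a constant $C$ independent of $n$ (using that $\{\gamma_n\}$ is bounded); in particular $u_{\gamma_n}\in\dom\calG$, so $u_{\gamma_n}(x)\in\co\calM$ a.e.\ and $\{u_{\gamma_n}\}$ is bounded in $L^\infty(\Omega;\R^m)$, hence precompact in the weak topology of $U$. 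The fundamental theorem of $\Gamma$-convergence then yields a subsequence with $u_{\gamma_n}\rightharpoonup\bar u$, where $\bar u\in U$ is a global minimizer of \eqref{eq:formal_prob} and $\calE_{\gamma_n}(u_{\gamma_n})\to\min\calE$.

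For the strong convergence I would compare $u_{\gamma_n}$ with the competitor $\bar u$ in the minimization of $\calE_{\gamma_n}$:
\[
  \calE(u_{\gamma_n})+\tfrac{\gamma_n}2\|u_{\gamma_n}\|_U^2=\calE_{\gamma_n}(u_{\gamma_n})\le\calE_{\gamma_n}(\bar u)=\min\calE+\tfrac{\gamma_n}2\|\bar u\|_U^2 .
\]
Since $\calE(u_{\gamma_n})\ge\min\calE$, this forces $\|u_{\gamma_n}\|_U\le\|\bar u\|_U$, and together with weak lower semicontinuity of the norm it gives $\|u_{\gamma_n}\|_U\to\|\bar u\|_U$; in the Hilbert space $U$, norm convergence plus $u_{\gamma_n}\rightharpoonup\bar u$ implies $u_{\gamma_n}\to\bar u$ strongly (expand $\|u_{\gamma_n}-\bar u\|_U^2$). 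I expect this last step to be the only genuinely non-routine point: $\Gamma$-convergence by itself yields only weak convergence, and the upgrade relies on the particular Tikhonov/Moreau--Yosida structure of the perturbation $\tfrac\gamma2\|\cdot\|_U^2$ through the comparison above, not on compactness of $S$ or strict convexity of $\calF+\calG$; it also shows in passing that the limit $\bar u$ is a minimal-norm global minimizer of \eqref{eq:formal_prob}.
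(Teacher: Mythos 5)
Your proposal is correct and follows essentially the same route as the paper: weak lower semicontinuity of $\calE$ and of the norm for the $\liminf$ inequality, the constant recovery sequence, equi-coercivity from $\dom g=\co\calM$, and the comparison $\calE(u_{\gamma_n})+\tfrac{\gamma_n}2\|u_{\gamma_n}\|_U^2\le\calE_{\gamma_n}(\bar u)\le\calE(u_{\gamma_n})+\tfrac{\gamma_n}2\|\bar u\|_U^2$ to upgrade weak to strong convergence via norm convergence in the Hilbert space $U$. The extra observations (metrizability of the weak topology on bounded sets, minimal-norm selection of the limit) are correct but not needed.
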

\begin{proof}
    For the $\Gamma$-limit, we first have to show that for any sequence $\gamma_n\to0$ and any weakly converging sequence $u_n\rightharpoonup u$ we have $\liminf_{\gamma_n\to0}\calE_{\gamma_n}(u_{\gamma_n})\geq\calE(u)$,
    which is an immediate consequence of the sequential weak lower semicontinuity of $\calE$ (shown in the proof of \cref{thm:existence}) and of $\|\cdot\|_{U}$.
    Second, the required recovery sequence is just the constant sequence $u_n=u$.
    Furthermore, minimizers of $\calE_\gamma$ are uniformly bounded in $U$, since $g$ is infinite outside the convex hull $\co\calM$,
    which together with the $\Gamma$-convergence is well-known to imply the weak convergence in $U$ of minimizers of $\calE_\gamma$ to minimizers of $\calE$.
    Finally, for such a weakly converging sequence $u_{\gamma_n}\rightharpoonup u$ of minimizers of $\calE_{\gamma_n}$ we have
    \begin{equation}
        \calE(u_{\gamma_n})+\frac{\gamma_n}2\norm{u_{\gamma_n}}_{U}^2\leq\calE_{\gamma_n}(u)\leq\calE(u_{\gamma_n})+\frac{\gamma_n}2\norm{u}_{U}^2,
    \end{equation}
    which implies $\norm{u}_{U}\geq\norm{u_{\gamma_n}}_{U}$ so that the convergence $u_{\gamma_n}\to u$ is actually strong.
\end{proof}
For error estimates of the Moreau--Yosida approximation in terms of $\gamma$ (as well as of a finite element discretization in the scalar case) under a regularity assumption, we refer to \cite{CDP:2018}.

\section{Stability properties of multibang controls}\label{sec:stability}

We now discuss stability properties of the controls by exploiting the special structure of the optimality conditions for the multibang control problem.
In particular we consider in what sense the controls converge as the target state converges; what can be said about controls with values in $\calM$; and when exact controls (which achieve the target state) can be retrieved by the optimization.
To keep the notation concise, we set
\begin{equation}
    \calE^z(u):=\frac12\norm{S(u) - z}_Y^2 + \int_{\Omega} g(u(x))\,\dd x,
\end{equation}
where $g:\R\to\extR$ is again proper, convex, and weakly lower semicontinuous with $\dom g = \co \calM$.

\subsection{Stability with respect to target perturbations}

First, we examine how perturbations of the target $z$ influence the minimizer of \eqref{eq:formal_prob}.
This is for instance of interest if our control problem actually represents an inverse problem,
in which the measurement $z$ is typically slightly perturbed by noise.
We will see that as $z_n$ converges strongly to $z$ in $Y$, the corresponding minimizers converge in $U$ in the weak sense.
Strong convergence cannot be expected in general due to worst-case scenarios in which the limit minimizer $\bar u$ has a nonempty ``singular arc''
\begin{equation}
    \calS_{\bar u}=\set{x\in\Omega}{\bar u(x)\notin\calM},
\end{equation}
i.e., the region in which $\bar u$ does not attain any of the distinguished values $\calM$.
However, away from that singular arc one obtains strong convergence and, as a consequence, controls in $\calM$ even for perturbed targets.
In this section we use the following additional assumptions on $S$
(which will be shown to hold for our model forward operators in \cref{sec:stateEq}):
\begin{enumerate}[label=(\textsc{a}\arabic*),ref={\normalfont(\textsc{a}\arabic*)}]\setcounter{enumi}{2}
    \item\label{enm:compactness}
        $S:U\to Y$ is compact.
    \item\label{enm:adjointConvergence}
        For some Banach space $V\hookleftarrow U$ with $V^*\hookrightarrow L^\infty(\Omega;\R^m)$, we have
        \begin{equation*}
            \lim_{\tilde u\rightharpoonup u\text{ in }U}\norm{[S'(\tilde u)-S'(u)]^*y}_{V^*}=0\quad\text{ for all }y\in Y.
        \end{equation*}
\end{enumerate}

\begin{proposition}[$\Gamma$-convergence of objective functional]\label{thm:GammaConvergence}
    Let $z_n\to z$ in $Y$ and $S$ satisfy \ref{enm:weakWeakContinuity}. Then with respect to weak convergence in $U$, we have
    \begin{equation*}
        \Gamma\text{-}\lim_{n\to\infty}\calE^{z_n}=\calE^z.
    \end{equation*}
\end{proposition}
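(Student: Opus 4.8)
The plan is to verify the two defining properties of $\Gamma$-convergence with respect to weak convergence in $U$: the $\liminf$ inequality for arbitrary weakly convergent sequences, and the existence of a recovery sequence for each $u$. Since the penalty term $\int_\Omega g(u(x))\,\dd x$ does not depend on $n$, the entire issue is localized in the discrepancy term $\frac12\|S(u)-z_n\|_Y^2$, and the key input is assumption \ref{enm:weakWeakContinuity}, which is already being assumed.

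For the $\liminf$ inequality, I would take $u_n\rightharpoonup u$ in $U$ and estimate $\liminf_{n\to\infty}\calE^{z_n}(u_n)\geq\calE^z(u)$. By \ref{enm:weakWeakContinuity} we have $S(u_n)\rightharpoonup S(u)$ in $Y$, and since $z_n\to z$ strongly, it follows that $S(u_n)-z_n\rightharpoonup S(u)-z$ in $Y$; the norm is weakly lower semicontinuous, so $\liminf_n\frac12\|S(u_n)-z_n\|_Y^2\geq\frac12\|S(u)-z\|_Y^2$. The penalty term $\int_\Omega g(u_n(x))\,\dd x$ is sequentially weakly lower semicontinuous by convexity of $g$ (exactly as invoked in the proof of \cref{thm:existence}), so $\liminf_n\int_\Omega g(u_n(x))\,\dd x\geq\int_\Omega g(u(x))\,\dd x$. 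Adding the two (using that $\liminf$ of a sum dominates the sum of $\liminf$s) gives the claim. For the recovery sequence, I would simply take the constant sequence $u_n\equiv u$: then $\calE^{z_n}(u)=\frac12\|S(u)-z_n\|_Y^2+\int_\Omega g(u(x))\,\dd x\to\frac12\|S(u)-z\|_Y^2+\int_\Omega g(u(x))\,\dd x=\calE^z(u)$ by continuity of the norm and of scalar multiplication, using $z_n\to z$ in $Y$; in particular $\limsup_n\calE^{z_n}(u)\leq\calE^z(u)$. If $u\notin\dom\calG$ both sides are $+\infty$ and there is nothing to prove.

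There is essentially no hard step here — this is the same argument pattern as in the preceding two propositions, with $z_n\to z$ playing the role that the $\Gamma_n\to 0$ perturbation played before. The only mild subtlety worth a sentence is why weak convergence of the argument suffices despite the nonlinearity of $S$: that is precisely what \ref{enm:weakWeakContinuity} provides, and the strong convergence $z_n\to z$ ensures the perturbed residuals still converge weakly. One could remark that, combined with the uniform $L^\infty$ (hence weak-$U$) compactness of sublevel sets coming from $\dom g=\co\calM$ being bounded, this $\Gamma$-convergence yields subsequential weak convergence of minimizers of $\calE^{z_n}$ to a minimizer of $\calE^z$ — but that consequence is not part of the stated proposition and would belong to a subsequent result, so I would leave it to the surrounding text.
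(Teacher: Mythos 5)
Your proposal is correct and follows essentially the same route as the paper: the $\liminf$ inequality via \ref{enm:weakWeakContinuity}, weak lower semicontinuity of the norm, and convexity of $g$, and the constant sequence as recovery sequence. The additional remarks (superadditivity of $\liminf$, the case $u\notin\dom\calG$) are fine but not needed beyond what the paper records.
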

\begin{proof}
    For the $\liminf$ inequality, let $u_n\rightharpoonup u$ weakly in $U$, then by property \ref{enm:weakWeakContinuity} and the weak lower semicontinuity of $\norm{\cdot}_Y$ and the convexity of $g$, we have
    \begin{equation*}
        \begin{aligned}
            \liminf_{n\to\infty}\calE^{z_n}(u_n)
            &=\liminf_{n\to\infty}\frac12\norm{S(u_n) - z_n}_Y^2 + \int_{\Omega} g(u_n(x))\,\dd x\\
            &\geq\frac12\norm{S(u) - z}_Y^2 + \int_{\Omega} g(u(x))\,\dd x
            =\calE^z(u).
        \end{aligned}
    \end{equation*}
    For the $\limsup$ inequality, choose $u_n=u\in U$ to obtain
    \begin{displaymath}
        \limsup_{n\to\infty}\calE^{z_n}(u_n)
        =\limsup_{n\to\infty}\frac12\norm{S(u) - z_n}_Y^2 + \int_{\Omega} g(u(x))\,\dd x
        =\calE^z(u).
    \end{displaymath}
\end{proof}

This proposition now implies a weak stability of the control.

\begin{corollary}[stability of control and state]
    Under the conditions of \cref{thm:GammaConvergence} and \ref{enm:compactness}, any sequence $\{u_n\}_{n\in\N}$ of minimizers of $\calE^{z_n}$ contains a subsequence converging weakly in $U$ to a minimizer $\bar u$ of $\calE^z$.
    The corresponding states $y_n=S(u_n)$ converge strongly in $Y$ to $\bar y=S(\bar u)$.
\end{corollary}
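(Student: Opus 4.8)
The plan is to apply the ``fundamental theorem of $\Gamma$-convergence'' (as already used in the proof of the proposition on vanishing regularization): $\Gamma$-convergence together with equi-coercivity forces minimizers of the perturbed functionals to converge to minimizers of the $\Gamma$-limit. Here \cref{thm:GammaConvergence} supplies the $\Gamma$-convergence part with respect to the weak topology of $U$, so the equi-coercivity actually needed is extremely weak: it suffices that any sequence $\{u_n\}$ of minimizers of $\calE^{z_n}$ lie in a fixed bounded subset of $U$. This is automatic because $\dom g = \co\calM$ is bounded, so every point in the domain of $\calE^{z_n}$ — in particular every minimizer $u_n$ — satisfies $u_n(x)\in\co\calM$ for a.e.\ $x$, hence $\norm{u_n}_U\le C\abs{\Omega}^{1/2}$ with $C$ a bound for $\co\calM$. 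Thus from $\{u_n\}$ I can extract a (non-relabeled) subsequence with $u_n\rightharpoonup\bar u$ in $U$ for some $\bar u\in U$; and $\calE^z$ itself has a minimizer by \cref{thm:existence} since \ref{enm:weakWeakContinuity} is in force.

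Next I would run the standard two-sided estimate. Let $u^\star$ be any minimizer of $\calE^z$. Using the constant recovery sequence at $u^\star$ from the proof of \cref{thm:GammaConvergence} together with the minimality of each $u_n$,
\begin{equation*}
  \limsup_{n\to\infty}\calE^{z_n}(u_n)\le\limsup_{n\to\infty}\calE^{z_n}(u^\star)=\calE^z(u^\star)=\min_{u\in U}\calE^z(u),
\end{equation*}
while the $\liminf$-inequality of \cref{thm:GammaConvergence} along $u_n\rightharpoonup\bar u$ gives $\calE^z(\bar u)\le\liminf_{n\to\infty}\calE^{z_n}(u_n)$. Chaining these yields $\calE^z(\bar u)\le\min_U\calE^z$, so $\bar u$ is a minimizer of $\calE^z$ — the first assertion. (As a by-product every inequality above is an equality, so $\calE^{z_n}(u_n)\to\calE^z(\bar u)$, though this is not needed.)

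For the states I would simply invoke compactness: by \ref{enm:compactness}, $S$ maps the bounded, weakly convergent sequence $u_n\rightharpoonup\bar u$ to a sequence admitting a strongly convergent subsequence in $Y$, and by \ref{enm:weakWeakContinuity} the strong limit of any such subsequence must coincide with the weak limit $S(\bar u)$; since this applies to every subsequence of $\{S(u_n)\}$, the whole sequence $y_n=S(u_n)$ converges strongly to $\bar y=S(\bar u)$ in $Y$. I do not anticipate any genuine obstacle: this is a routine corollary of \cref{thm:GammaConvergence}. The only points requiring minimal care are (i) extracting the uniform $L^\infty$— hence $L^2$—bound on $u_n$ from $\dom g=\co\calM$, which is what lets one apply ``compactness'' of $S$, and (ii) bookkeeping with subsequences when passing to the weak limit while simultaneously exploiting the recovery sequence at $u^\star$.
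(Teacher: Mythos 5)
Your proposal is correct and follows essentially the same route as the paper: uniform boundedness of minimizers from $\dom g=\co\calM$ gives equi-coercivity, the $\Gamma$-convergence of \cref{thm:GammaConvergence} yields weak convergence (up to a subsequence) to a minimizer of $\calE^z$, and strong convergence of the states follows from \ref{enm:weakWeakContinuity} together with \ref{enm:compactness}. You merely spell out the standard two-sided $\liminf$/$\limsup$ estimate that the paper compresses into the phrase ``well-known to imply convergence of minimizers.''
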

\begin{proof}
    Since $g$ is infinite outside $\co\calM$ we know that $\norm{u}_{L^\infty(\Omega;\R^n)}$ is uniformly bounded among all $u\in U$ with finite energy $\calE^{z_n}(u)$,
    where the bound is independent of $n$.
    Thus, the $\calE^{z_n}$ are equimildly coercive so that the convergence of minimizers $u_n$ follows from the $\Gamma$-convergence of the functionals.
    The convergence of states $y_n=S(u_n)\to \bar y=S(\bar u)$ along the subsequence follows from $u_n\rightharpoonup \bar u$ together with properties \ref{enm:weakWeakContinuity} and
    \ref{enm:compactness} (weak-to-weak continuity and compactness of $S$, respectively).
\end{proof}

Under additional assumptions, we also obtain convergence of the dual variable.

\begin{corollary}[stability of dual]\label{thm:stability}
    Under the conditions of \cref{thm:GammaConvergence} and \ref{enm:weakWeakContinuity}--\ref{enm:adjointConvergence},
    consider the sequence of minimization problems $\min_{u\in U}\calE^{z_n}(u)$.
    The corresponding optimal controls $u_n$, states $y_n$, and dual variables $p_n$ satisfy up to a subsequence
    \begin{equation*}
        u_n\rightharpoonup \bar u\;\text{in }U,
        \quad
        y_n\to \bar y\;\text{in }Y,
        \quad\text{and}\quad
        p_n\to \bar p\;\text{in }V^*,
    \end{equation*}
    where $\bar u$ is a minimizer of $\calE^z$, $\bar y = S(\bar u)$, and $\bar p$ satisfies \eqref{eq:optsys}.
\end{corollary}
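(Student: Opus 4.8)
The plan is to bootstrap from the preceding stability result (\cref{thm:GammaConvergence} together with \ref{enm:compactness}): after passing to a subsequence (not relabeled) one already has $u_n\rightharpoonup\bar u$ in $U$ and $y_n=S(u_n)\to\bar y=S(\bar u)$ in $Y$ with $\bar u$ a minimizer of $\calE^z$. Applying \cref{thm:optCond} to each functional $\calE^{z_n}$ (which has the required form, with discrepancy $\tfrac12\norm{S(\cdot)-z_n}_Y^2$) produces the dual variable explicitly, namely $p_n=-S'(u_n)^*(y_n-z_n)$ together with $u_n\in\partial\calG^*(p_n)$. I then set $\bar p:=-S'(\bar u)^*(\bar y-z)$; since $\bar u$ minimizes $\calE^z$, \cref{thm:optCond} furnishes a dual satisfying \eqref{eq:optsys}, and the first line of \eqref{eq:optsys} forces it to coincide with this $\bar p$. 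Thus the whole statement reduces to proving $p_n\to\bar p$ in $V^*$.

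To this end, write $T_n:=[S'(u_n)-S'(\bar u)]^*$ and $w_n:=(y_n-\bar y)-(z_n-z)$, so that $w_n\to0$ in $Y$ and $y_n-z_n=(\bar y-z)+w_n$; then
\begin{equation*}
    p_n-\bar p=-T_n(\bar y-z)-T_nw_n-S'(\bar u)^*w_n ,
\end{equation*}
and each term is estimated in $V^*$. The first tends to $0$ by \ref{enm:adjointConvergence} applied verbatim, with fixed argument $\bar y-z$ and $\tilde u=u_n\rightharpoonup\bar u$. The third tends to $0$ since $w_n\to0$ in $Y$ and $S'(\bar u)^*$ is a bounded operator from $Y$ into $V^*$ (its range lies in $V^*$, as \ref{enm:adjointConvergence} presupposes, whence boundedness follows from the closed-graph theorem, using $V^*\hookrightarrow L^\infty(\Omega;\R^m)\hookrightarrow U$ and boundedness of $S'(\bar u)^*\colon Y\to U$). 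For the middle term, the same closed-graph argument puts each $T_n$ in $\mathcal L(Y,V^*)$, and \ref{enm:adjointConvergence} makes the family $\{T_n\}_n$ pointwise bounded there, hence uniformly bounded by the Banach--Steinhaus theorem; applying it to $w_n\to0$ in $Y$ then gives $T_nw_n\to0$ in $V^*$. Summing, $p_n\to\bar p$ in $V^*$, and a fortiori in $U$.

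The delicate step — where I would spend most of the care — is precisely this middle term, in which both the adjoint linearization and the vector it acts on vary with $n$, so that neither the pointwise hypothesis \ref{enm:adjointConvergence} nor mere boundedness of a single adjoint is enough in isolation; the decomposition above (isolating a fixed limit argument from a residual vanishing in $Y$) together with the uniform-boundedness argument is what makes it work. A secondary, essentially cosmetic point is that \ref{enm:adjointConvergence} as stated controls only the differences $[S'(\tilde u)-S'(u)]^*y$, so one has to read into it — or verify for the concrete state operators in \cref{sec:stateEq} — that each individual map $S'(u)^*\colon Y\to V^*$ is well defined, which is what legitimizes treating $S'(\bar u)^*$ as an operator into $V^*$. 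Finally, although $\bar p$ satisfying \eqref{eq:optsys} was already obtained in the identification step, it may be re-derived from the limit as a consistency check: $p_n\to\bar p$ strongly in $U$ and $u_n\rightharpoonup\bar u$ weakly in $U$, combined with the strong--weak closedness of the graph of the maximally monotone operator $\partial\calG$, yield $\bar p\in\partial\calG(\bar u)$, i.e.\ $\bar u\in\partial\calG^*(\bar p)$ by Fenchel--Young.
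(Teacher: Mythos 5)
Your proposal is correct and follows essentially the same route as the paper: identify $p_n=S'(u_n)^*(z_n-y_n)$ and $\bar p=S'(\bar u)^*(z-\bar y)$ from the optimality conditions, use \ref{enm:adjointConvergence} plus Banach--Steinhaus to get uniform boundedness of the adjoints in $L(Y;V^*)$, and split $p_n-\bar p$ into a term with fixed argument $z-\bar y$ (handled by \ref{enm:adjointConvergence} directly) and terms involving $w_n=z_n-y_n-(z-\bar y)\to0$ in $Y$ (handled by the uniform bound). Your three-term decomposition is just a finer splitting of the paper's two-term one, and your closed-graph remark and the monotonicity consistency check are careful but inessential additions.
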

\begin{proof}
    We already know $u_n\rightharpoonup \bar u$ and $y_n\to \bar y$.
    By the Banach--Steinhaus theorem and \ref{enm:adjointConvergence}, $[S'(u_n)-S'(\bar u)]^*$ is uniformly bounded in $L(Y;V^*)$ and thus also $S'(u_n)^*$. Now
    \begin{equation}
        \begin{aligned}[b]
            \norm{p_n-\bar p}_{V^*}
            &=\norm{S'(u_n)^*(z_n-y_n)-S'(\bar u)^*(z-\bar y)}_{V^*}\\
            &\leq\norm{S'(u_n)^*(z_n-y_n)-S'(u_n)^*(z-\bar y)}_{V^*} +
            \norm{S'(u_n)^*(z-\bar y)-S'(\bar u)^*(z-\bar y)}_{V^*}\\
            &\leq\norm{S'(u_n)^*}_{L(Y;V^*)}\norm{z_n-y_n-(z-\bar y)}_Y+\norm{[S'(u_n)^*-S'(\bar u)^*](z-\bar y)}_{V^*}
            \to0.
        \end{aligned}
        \qedhere
    \end{equation}
\end{proof}

The final result shows strong convergence of controls outside the singular arc, which will be seen to correspond to the case where $\partial g^*(\bar p(x))$ is set valued (cf.~\eqref{eq:conj_subdiff_radial} and \eqref{eq:conj_subdiff_concentric}).

\begin{proposition}[locally strong convergence of control]\label{thm:localConvergence}
    Let the conditions of \cref{thm:GammaConvergence} and {\ref{enm:weakWeakContinuity}--\ref{enm:adjointConvergence}} hold.
    Furthermore, let $Q$ be the set on which $\partial g^*$ is single valued, and abbreviate $\Omega_P=\{x\in\Omega:p(x)\in P\}$ for given $P\subset\R^m$. Then we have
    \begin{enumerate}[{label={(\roman*)}, ref={(\roman*)}}]
        \item for any $P\subset\subset Q$ compact and $n$ large enough, $u_n(x) = \bar u(x) \in \calM$ for a.e.~$x\in \Omega_P$;
        \item $u_n|_{\Omega_Q}\to \bar u|_{\Omega_Q}$ strongly in $L^2(\Omega_Q;\R^m)$ and $\bar u(x)\in \calM$ for a.e.~$x\in \Omega_Q$.
    \end{enumerate}
\end{proposition}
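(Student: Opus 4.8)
The plan is to exploit \cref{thm:stability}: along the subsequence it produces we already have $u_n\rightharpoonup\bar u$ in $U$, $y_n\to\bar y$ in $Y$, and $p_n\to\bar p$ in $V^*$ with $\bar p$ solving \eqref{eq:optsys}, and — this is the key point — the embedding $V^*\hookrightarrow L^\infty(\Omega;\R^m)$ upgrades the last convergence to \emph{uniform} convergence of the dual variables (a.e.\ on $\Omega$). I would pair this with the polyhedral structure recalled after \cref{thm:optCond}: the piecewise affine form of $g^*$ shows that $Q=\bigcup_{u\in\calM}Q(u)$ is open, that $\partial g^*\equiv\{u\}$ on the open cell $Q(u)$ (with $u\in\calM$), and that $\partial g^*$ is genuinely set valued on the closed complement $\R^m\setminus Q$ (the union of the lower-dimensional faces where $g^*$ has a kink). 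Finally, applying \cref{thm:optCond} to each $\calE^{z_n}$ and using \eqref{eq:subdiff_pointwise}, every minimizer satisfies $u_n(x)\in\partial g^*(p_n(x))$ and $\bar u(x)\in\partial g^*(\bar p(x))$ for a.e.\ $x\in\Omega$; this is the bridge from (uniform) convergence of $p_n$ to convergence of $u_n$.

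For (i), fix a compact $P\subset\subset Q$. Being disjoint from the closed set $\R^m\setminus Q$, it has positive distance $\delta:=\mathrm{dist}(P,\R^m\setminus Q)$ from it, so every $q\in P$ lies in a single cell $Q(u_q)$ together with its entire $\delta$-neighborhood. For a.e.\ $x\in\Omega_P$ this forces $\partial g^*(\bar p(x))=\{\bar u(x)\}$ with $\bar u(x)\in\calM$ and $\bar p(x)\in Q(\bar u(x))$, the ball of radius $\delta$ around $\bar p(x)$ still lying inside $Q(\bar u(x))$. Choosing $n$ large enough that $\norm{p_n-\bar p}_{L^\infty(\Omega;\R^m)}<\delta$, we get $p_n(x)\in Q(\bar u(x))$ and hence $u_n(x)\in\partial g^*(p_n(x))=\{\bar u(x)\}$ for a.e.\ $x\in\Omega_P$, which is (i).

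For (ii), the pointwise claim $\bar u(x)\in\calM$ for a.e.\ $x\in\Omega_Q$ is immediate from the preceding paragraph, since $\bar p(x)\in Q$ makes $\partial g^*(\bar p(x))$ a singleton inside $\calM$. For the strong convergence I would exhaust the open set $Q$ from inside by compact sets, say $P_k:=\set{q\in Q}{\abs{q}\le k,\ \mathrm{dist}(q,\R^m\setminus Q)\ge 1/k}$, so that $\Omega_{P_k}\uparrow\Omega_Q$ and, as $\Omega$ has finite measure, $\abs{\Omega_Q\setminus\Omega_{P_k}}\to0$. Since $\dom g=\co\calM$ is bounded, $u_n$ and $\bar u$ are bounded by a fixed $C$ in $L^\infty(\Omega;\R^m)$, whence
\begin{equation*}
    \int_{\Omega_Q}\abs{u_n-\bar u}^2\,\dd x\le\int_{\Omega_{P_k}}\abs{u_n-\bar u}^2\,\dd x+4C^2\abs{\Omega_Q\setminus\Omega_{P_k}}.
\end{equation*}
By (i) the first term vanishes once $n$ is large (depending on $k$), so letting $n\to\infty$ and then $k\to\infty$ gives $u_n|_{\Omega_Q}\to\bar u|_{\Omega_Q}$ in $L^2(\Omega_Q;\R^m)$.

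The only genuinely delicate step — and the heart of the matter — is the geometric observation in (i): a set compactly contained in the \emph{open} region $Q$ stays a fixed distance from the kinks of $g^*$, so that for $n$ large $p_n(x)$ lies not merely somewhere in $Q$ but in the \emph{same} cell as $\bar p(x)$; this is precisely what upgrades the cheap ``$u_n(x)\in\calM$ a.e.\ on $\Omega_P$'' to the sharp ``$u_n(x)=\bar u(x)$''. The rest is routine: the needed uniform convergence $p_n\to\bar p$ is already supplied by \cref{thm:stability} through \ref{enm:adjointConvergence} and $V^*\hookrightarrow L^\infty(\Omega;\R^m)$, the pointwise optimality conditions are in hand, and the exhaustion argument uses only the boundedness of $\dom g$; no assumptions beyond \ref{enm:weakWeakContinuity}--\ref{enm:adjointConvergence} are required.
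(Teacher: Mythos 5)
Your proposal is correct and follows essentially the same route as the paper: uniform convergence $p_n\to\bar p$ from \cref{thm:stability}, the positive distance of a compact $P\subset\subset Q$ from the set where $\partial g^*$ is multivalued forcing $p_n(x)$ into the same cell as $\bar p(x)$, and then boundedness of the controls to pass from pointwise to $L^2$ convergence (the paper invokes dominated convergence where you use an explicit exhaustion, which is the same argument). Your spelling out of $\delta=\mathrm{dist}(P,\R^m\setminus Q)>0$ makes explicit what the paper phrases as "lies in the same connected component of $Q$".
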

\begin{proof}
    By \cref{thm:stability}, we have $p_n\to \bar p$ in $L^\infty(\Omega;\R^m)$.
    In particular, for $n$ large enough, for all $x\in\Omega_P$ the value $p_n(x)$ lies in the same connected component of $Q$ as $\bar p(x)$.
    Hence, $u_n(x)=\bar u(x)$ due to $u_n(x)\in\partial g^*(p_n(x))=\partial g^*(\bar p(x))$ and $\bar u(x)\in\partial g^*(\bar p(x))$.
    Since this holds for any compact subset $P$ of $Q$, we actually have pointwise convergence $u_n(x)\to \bar u(x)$ for almost all $x\in\Omega_Q$.
    The uniform boundedness of $u_n$ (since otherwise $g(u_n(x))=\infty$) then implies strong convergence by the dominated convergence theorem.
\end{proof}

\subsection{Controls in \boldmath$\calM$}

Here, we examine more closely controls taking values only in $\calM$. In the following, we refer to minimizers $\bar u\in U$ of $\calE^z$ with $\bar u(x) \in \calM$ for almost everywhere $x\in \Omega$ as \emph{multibang controls}.
First, we note that such controls allow us to achieve an energy arbitrarily close to the optimum.

\begin{remark}[near-optimality]
    Under assumptions \ref{enm:weakWeakContinuity} and \ref{enm:compactness}, we have
    \begin{equation*}
        \min_{u\in U} \calE^z(u)= \inf_{\substack{u\in U\\ u(x)\in\calM \text{ a.e.}}} \calE^z(u).
    \end{equation*}
    Indeed, let $\bar u\in U$ minimize $\calE^z$.
    By the definition of $g$, there exists a sequence $\{u_n\}_{n\in\N}\subset U$ with $u_n(x)\in \calM$ a.e., $u_n\rightharpoonup \bar u$ in $U$, and $\int_\Omega g(u_n(x))\,\dd x\to\int_\Omega g(\bar u(x))\,\dd x$.
    Furthermore, $S(u_n)\to S(\bar u)$ in $Y$ so that $\calE^z(u_n)\to\calE^z(\bar u)$.
\end{remark}

In the remainder of this subsection, we shall restrict ourselves to the case that
\begin{enumerate}[label=(a\arabic*),ref={\normalfont(a\arabic*)}]\setcounter{enumi}{4}
    \item\label{enm:linearity}
        $S:U\to Y$ is linear,
\end{enumerate}
which will only apply to the elasticity example, but not to the Bloch setting.
The intuition is that the case with multibang controls is generic (or even that targets with nonmultibang controls, i.e., $u(x)\notin\calM$ on a nonnegligible set, are nowhere dense in $Y$).
This is consistent with \cref{thm:localConvergence}, since targets with a singular arc of zero measure (or rather with $\Omega_Q=\Omega$) can be perturbed without producing a singular arc.
Below we will at least see that targets leading to multibang controls are dense in $Y$;
and that the mapping $z\mapsto\argmin_{u\in U}\calE^z(u)$ is not continuous in any target $z$ for which the singular arc has positive measure.

\begin{proposition}[approximation via multibang control]\label{thm:discrControlAppr}
    Let $S$ satisfy {\ref{enm:weakWeakContinuity}--\ref{enm:linearity}.}
    Then for any $z\in Y$ and corresponding minimizer $\bar u\in U$ of $\calE^z$, there exists a sequence $\{z_n\}_{n\in\N} \subset Y$ with $z_n\to z$ such that the corresponding minimizers $u_n\in U$ of $\calE^{z_n}$ satisfy $u_n(x)\in\calM$ almost everywhere, $u_n\rightharpoonup \bar u$, and $\calE^{z_n}(u_n)=\calE^z(\bar u)$.
\end{proposition}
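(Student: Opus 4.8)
The plan is to construct the perturbed targets $z_n$ explicitly from the given minimizer $\bar u$ and a suitable multibang approximation of it. Since $S$ is linear (assumption \ref{enm:linearity}), the optimality condition \eqref{eq:optsys} reads $-\bar p = S^*(S\bar u - z)$ together with $\bar u \in \partial\calG^*(\bar p)$, i.e., pointwise $\bar u(x)\in\partial g^*(\bar p(x))$ for a.e.\ $x$. The key observation is that $\partial g^*(\bar p(x))$ is a face of $\co\calM$ whose vertices lie in $\calM$; hence for a.e.\ $x$ we can write $\bar u(x)$ as a convex combination of the elements of $\calM\cap\partial g^*(\bar p(x))$. By the same measurable-selection/approximation argument used in the near-optimality remark, there is a sequence $\{u_n\}\subset U$ with $u_n(x)\in\calM\cap\partial g^*(\bar p(x))$ for a.e.\ $x$ (so in particular $u_n(x)\in\partial g^*(\bar p(x))$), with $u_n\rightharpoonup\bar u$ in $U$, and with $\int_\Omega g(u_n)\,\dd x\to\int_\Omega g(\bar u)\,\dd x$; in fact, since $g$ is affine on the face $\partial g^*(\bar p(x))$ and $u_n(x),\bar u(x)$ lie in it, one even gets $\int_\Omega g(u_n)\,\dd x = \int_\Omega g(\bar u)\,\dd x$ for each $n$.

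Next I would \emph{define} $z_n$ so that $(u_n,\bar p)$ solves the optimality system for target $z_n$. The natural choice is
\begin{equation*}
    z_n \defgl z + S(u_n) - S(\bar u) = z + S(u_n - \bar u),
\end{equation*}
because then $S^*(S u_n - z_n) = S^*(S\bar u - z) = -\bar p$, so the first equation of \eqref{eq:optsys} holds with the \emph{same} dual variable $\bar p$, while the second equation $u_n(x)\in\partial g^*(\bar p(x))$ holds by construction. Since $S$ is linear and bounded and $u_n\rightharpoonup\bar u$, compactness of $S$ (assumption \ref{enm:compactness}) gives $S(u_n)\to S(\bar u)$ strongly in $Y$, hence $z_n\to z$ in $Y$. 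Because \eqref{eq:optsys} is sufficient for optimality of the (convex, by \ref{enm:linearity}) problem $\min_u\calE^{z_n}(u)$, the function $u_n$ is indeed a minimizer of $\calE^{z_n}$.

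Finally I would verify the energy identity. We compute
\begin{equation*}
    \calE^{z_n}(u_n) = \tfrac12\norm{S(u_n)-z_n}_Y^2 + \int_\Omega g(u_n(x))\,\dd x
    = \tfrac12\norm{S(\bar u)-z}_Y^2 + \int_\Omega g(\bar u(x))\,\dd x = \calE^z(\bar u),
\end{equation*}
using $S(u_n)-z_n = S(\bar u)-z$ for the discrepancy term and the equality $\int_\Omega g(u_n)\,\dd x=\int_\Omega g(\bar u)\,\dd x$ from the affinity of $g$ on the relevant face for the penalty term. This gives all four claimed properties: $z_n\to z$, $u_n(x)\in\calM$ a.e., $u_n\rightharpoonup\bar u$, and $\calE^{z_n}(u_n)=\calE^z(\bar u)$.

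The main obstacle is the first step: producing the multibang sequence $u_n$ with $u_n(x)\in\calM\cap\partial g^*(\bar p(x))$ a.e.\ and $u_n\rightharpoonup\bar u$, together with the exact (not merely asymptotic) equality of the penalty integrals. One must argue measurably that each value $\bar u(x)$, lying in the face $F(x)\defgl\partial g^*(\bar p(x))$ of the polytope $\co\calM$, is a convex combination of the vertices of $F(x)$ (which are in $\calM$), then invoke a standard "chessboard"/Lyapunov-type construction on $\Omega$ to oscillate $u_n$ between these vertices so that the weak limit is $\bar u$; measurability of the selection of vertices as a function of $x$ needs a brief justification (e.g.\ via the finitely many possible faces $F$, partitioning $\Omega$ accordingly), and the affinity of $g$ restricted to each face is what upgrades $\int g(u_n)\to\int g(\bar u)$ to an exact identity. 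Everything else is bookkeeping with the linearity and compactness of $S$.
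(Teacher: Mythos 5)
Your proposal is correct and follows essentially the same route as the paper's own (sketched) proof: express $\bar u(x)$ as a convex combination of the vertices $\calM\cap\partial g^*(\bar p(x))$, oscillate to get $u_n\rightharpoonup\bar u$ with the same dual variable $\bar p$, set $z_n = Su_n + (z-S\bar u)$, and conclude by convexity; the exact energy identity via affinity of $g$ on each face matches the paper's claim. Your explicit flagging of the measurable-selection/Lyapunov step is a fair account of the detail the paper's sketch also leaves implicit.
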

\begin{proof}[Sketch of proof]
    By \eqref{eq:optsys}, we have $\bar p=S^*(z-S\bar u)$ and $\bar u(x)\in\partial g^*(\bar p(x))$ for almost all $x\in\Omega$.
    The piecewise affine structure of $g^*:\R^m\to\R$ implies that $\bar u(x)$ is a convex combination of (at most) $m+1$ values $\hat u_j\in\calM\cap\partial g^*(\bar p(x))$.
    Thus one can find $u_n\rightharpoonup \bar u$ with $u_n(x)\in\calM\cap\partial g^*(\bar p(x))$ for almost all $x\in\Omega$.
    Choosing $z_n=Su_n+(z-S\bar u)$, we have $z_n\to z$ as well as $\bar p=S^*(z_n-Su_n)$ and $u_n(x)\in\partial g^*(\bar p(x))$ for almost all $x\in \Omega$. Hence by the convexity of the energy $\calE^{z_n}$, $u_n$ is a minimizer of $\calE^{z_n}$.
    Furthermore, one can even choose $u_n$ such that $\int_\Omega g(u_n(x))\,\dd x=\int_\Omega g(\bar u(x))\,\dd x$ so that $\calE^{z_n}(u_n) = \calE^z(\bar u)$ as claimed.
\end{proof}

\begin{corollary}[strong convergence of control]
    Let the conditions of \cref{thm:discrControlAppr} hold. Then
    \begin{enumerate}[{label={(\roman*)}, ref={(\roman*)}}]
        \item the targets $z$ admitting a multibang control $\bar u$ minimizing $\calE^z$ are dense in $Y$;
        \item if $S$ is injective and the minimizer $\bar u$ to $\calE^z$ has a singular arc of positive measure, then one cannot have strong convergence of minimizers $u_n$ of $\calE^{z_n}$ for all $z_n\to z$.
    \end{enumerate}
\end{corollary}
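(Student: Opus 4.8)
The plan is to prove the two assertions separately, both by leveraging Proposition~\ref{thm:discrControlAppr}. For part (i), start from an arbitrary $z\in Y$ and let $\bar u$ be a minimizer of $\calE^z$ (which exists by Proposition~\ref{thm:existence}). Proposition~\ref{thm:discrControlAppr} directly furnishes a sequence $z_n\to z$ together with minimizers $u_n$ of $\calE^{z_n}$ satisfying $u_n(x)\in\calM$ a.e.; in particular each $z_n$ is a target admitting a multibang minimizer. Since $z$ was arbitrary, the set of such targets is dense in $Y$. So part (i) is essentially an immediate corollary and requires no real work.

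For part (ii), argue by contradiction. Suppose $S$ is injective, the minimizer $\bar u$ of $\calE^z$ has singular arc $\calS_{\bar u}=\{x:\bar u(x)\notin\calM\}$ of positive measure, and suppose that for \emph{every} sequence $z_n\to z$ every sequence of minimizers $u_n$ of $\calE^{z_n}$ converges strongly in $U$ to $\bar u$ (note $\bar u$ is unique here: by strict convexity of $v\mapsto\frac12\|Sv-z\|_Y^2$ on the quotient modulo $\ker S=\{0\}$ together with convexity of $\calG$, or simply because the limit is forced by the stability corollary). Now invoke Proposition~\ref{thm:discrControlAppr} to pick the \emph{specific} sequence $z_n\to z$ it produces, with minimizers $u_n$ of $\calE^{z_n}$ that are multibang, i.e.\ $u_n(x)\in\calM$ a.e., and that satisfy $u_n\rightharpoonup\bar u$. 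By the assumed strong convergence, $u_n\to\bar u$ strongly in $L^2(\Omega;\R^m)$, hence (passing to a further subsequence) $u_n(x)\to\bar u(x)$ for a.e.\ $x\in\Omega$.

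The contradiction comes from the fact that $\calM$ is a \emph{finite} set, hence closed and discrete: if $u_n(x)\to\bar u(x)$ pointwise a.e.\ and $u_n(x)\in\calM$ for all $n$, then $\bar u(x)\in\overline{\calM}=\calM$ for a.e.\ $x$, so $\bar u$ is itself a multibang control and $\calS_{\bar u}$ is a null set, contradicting the hypothesis that it has positive measure. The main (and essentially only) subtle point is to make sure that the $u_n$ invoked here are genuinely the minimizers of $\calE^{z_n}$ --- which they are, by the minimality assertion in Proposition~\ref{thm:discrControlAppr}, using injectivity of $S$ (i.e.\ \ref{enm:linearity} together with $\ker S=\{0\}$) so that $\bar p=S^*(z_n-Su_n)$ is well-defined and the optimality system \eqref{eq:optsys} is both necessary and sufficient by convexity. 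One should also note that, since all $u_n$ take values in the bounded set $\calM$, weak $L^2$ convergence plus pointwise a.e.\ convergence are automatically upgraded by dominated convergence, so there is no integrability obstruction; the whole force of the argument is the discreteness of $\calM$ clashing with a positive-measure singular arc.
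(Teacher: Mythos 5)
Your proposal is correct and follows essentially the same route as the paper: part (i) is read off directly from \cref{thm:discrControlAppr}, and part (ii) combines uniqueness of the minimizer (strict convexity via injectivity of $S$) with the multibang sequence from part (i), using that strong $L^2$ convergence gives a.e.\ pointwise convergence of a subsequence, whose limit must then lie in the closed finite set $\calM$, contradicting the positive-measure singular arc. No gaps.
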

\begin{proof}
    The first statement is a direct consequence of \cref{thm:discrControlAppr}. The second statement follows from the strict convexity of $\calE^z$ and thus the uniqueness of its minimizers, together with the fact that strong convergence in $U$ implies pointwise convergence:
    Indeed, let $\bar u$ have a singular arc $\calS_{\bar u}$ of positive measure and choose $z_n\to z$ such that the unique minimizers $u_n$ of $\calE^{z_n}$ are multibang controls (which is possible by the first statement).
    If we had strong convergence $u_n\to\bar u$ in $U$, then (up to a subsequence) also $u_n\to\bar u$ pointwise almost everywhere, in particular, on $\calS_{\bar u}$.
    This contradicts $u_n(x)\in\calM$ almost everywhere.
\end{proof}

\subsection{Retrieval of exact controls}

We now consider more specifically the consequence of the convex relaxation \eqref{eq:reg} for some nonnegative and strictly convex $c:\R^m\to\R$.
A peculiar feature of the multibang control in this case is that for attainable targets, i.e., if there exists a $\hat u\in U$ such that $z=S(\hat u)$, the generating control $\hat u$ can only be recovered as a minimizer $\bar u$ of the optimization problem~\eqref{eq:formal_prob} if
$c(\hat u(x))= \min_{v\in\calM} c(v)$ almost everywhere. This demonstrates the desirability of allowing multiple admissible control values of equal magnitude.

\begin{proposition}[achievement of target]
    If $S$ satisfies \ref{enm:FrechetDifferentiability}, then, for any minimizer $\bar u\in U$ of $\calE^z$ that satisfies $S(\bar u) = z$, it holds that $g(\bar u(x)) = \min_{v\in\calM} g(v)$ almost everywhere.
    In particular, if in addition $\bar u(x)\in\calM$ almost everywhere, then $c(\bar u(x))=\min_{v\in\calM} c(v)$.
\end{proposition}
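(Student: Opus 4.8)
The plan is to exploit that an attainable target makes the discrepancy term $\tfrac12\norm{S(\bar u)-z}_Y^2$ vanish at $\bar u$, which forces the dual variable in the optimality system to be zero; the resulting inclusion then pins $\bar u(x)$ down to a minimizer of $g$ for a.e.\ $x$. Concretely, since $S(\bar u)=z$ the first line of \eqref{eq:optsys} gives $\calF'(\bar u)=S'(\bar u)^*(S(\bar u)-z)=0$. Applying \cref{thm:optCond} — whose only hypothesis, \ref{enm:FrechetDifferentiability}, is assumed here — produces a $\bar p\in U$ with $-\bar p=\calF'(\bar u)=0$, hence $\bar p=0$, together with $\bar u\in\partial\calG^*(\bar p)=\partial\calG^*(0)$. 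By the pointwise description \eqref{eq:subdiff_pointwise}, this means $\bar u(x)\in\partial g^*(0)$ for a.e.\ $x\in\Omega$.

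The next step is to identify $\partial g^*(0)$. By the Fenchel--Young identity, $v\in\partial g^*(0)$ is equivalent to $g(v)+g^*(0)=0$; since $g^*(0)=\sup_w(-g(w))=-\inf_{\R^m}g$, this says $g(v)=\inf_{\R^m}g$, i.e.\ $\partial g^*(0)=\argmin_{\R^m}g$. Because $\dom g=\co\calM$ is bounded and, by construction \eqref{eq:reg}, $\epi g=\co\{(v,\alpha c(v)):v\in\calM\}+(\{0\}\times[0,\infty))$, the infimum of $g$ is attained at the vertex corresponding to a minimizer of $c$ over $\calM$, so $\inf_{\R^m}g=\min_{v\in\calM}g(v)=\alpha\min_{v\in\calM}c(v)$. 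Hence $g(\bar u(x))=\min_{v\in\calM}g(v)$ a.e., which is the first claim. If moreover $\bar u(x)\in\calM$ a.e., the vertex correspondence gives $g(\bar u(x))=\alpha c(\bar u(x))$, and dividing the identity $\alpha c(\bar u(x))=\alpha\min_{v\in\calM}c(v)$ by $\alpha$ yields $c(\bar u(x))=\min_{v\in\calM}c(v)$.

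The one step that really needs care is the identification $\inf_{\R^m}g=\min_{\calM}g$ together with $g|_\calM=\alpha c|_\calM$: both rely on $\calM$ being exactly the vertex set of the polyhedral epigraph \eqref{eq:reg} rather than merely a set generating it, so under the weaker standing assumptions of \cref{sec:stability} one must explicitly invoke the construction \eqref{eq:reg}; the rest is routine. As a variant that avoids \eqref{eq:optsys} altogether, one can reuse the perturbation argument from the proof of \cref{thm:optCond}: since $\calF$ attains its global minimum $0$ at $\bar u$ (so $\calF'(\bar u)=0$), testing with $u_t=\bar u+t(w-\bar u)$ and letting $t\to0$ shows $\calG(\bar u)\le\calG(w)$ for every $w\in U$; comparing with a constant $w\equiv v^\ast\in\argmin_{v\in\calM}g(v)$ and combining with the pointwise bound $g(\bar u(x))\ge\min_{\R^m}g$ forces $\int_\Omega(g(\bar u(x))-\min_{\R^m}g)\,\dd x=0$, hence $g(\bar u(x))=\min_{\R^m}g$ a.e.
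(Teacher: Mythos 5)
Your main argument is correct and essentially the paper's own proof: from $S(\bar u)=z$ you deduce $\bar p=0$ and $\bar u\in\partial\calG^*(0)$, and the only cosmetic difference is that you apply Fenchel--Young pointwise to identify $\partial g^*(0)=\argmin g$, whereas the paper first passes to $0\in\partial\calG(\bar u)$ and then localizes to $0\in\partial g(\bar u(x))$; the identification $\inf_{\R^m}g=\min_{v\in\calM}g(v)$ (via $\min f^{**}=\inf f$) is the same, and your caveat that the ``in particular'' part needs $g=\alpha c$ on $\calM$ --- guaranteed by the strict convexity of $c$ assumed in this subsection --- is well taken. The appended variational argument avoiding \eqref{eq:optsys} is a valid alternative but reaches the same conclusion by the same underlying mechanism.
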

\begin{proof}
    If $S(\bar u)=z$, the first relation in the optimality condition \eqref{eq:optsys} together with linearity of $S'(\bar u)$ implies $\bar p=0$. Hence, the second relation yields $\bar u\in\partial\calG^*(0)$ and therefore $0\in\partial\calG(\bar u)$. By \eqref{eq:subdiff_pointwise}, this implies $0\in\partial g(\bar u(x))$ for almost all $x\in\Omega$ and therefore
    \begin{equation}
        g(\bar u(x)) = \min_{v\in \R^m} g(v) = \inf_{v\in\R^m} g_\infty(v) = \inf_{v\in \calM} \alpha c(v) = \min_{v\in \calM} g(v)
    \end{equation}
    since $\min f^{**} = \inf f$ by the properties of the convex hull; see, e.g., \cite[Prop.~12.9\,(iii)]{Bauschke:2011}.
\end{proof}

If, however, $c(\hat u(x))=\min_{v\in\calM} c(v)$ is not satisfied almost everywhere, then the generating control $\hat u$ can only be recovered in the limit $\alpha\to 0$.
In fact, in this limit the best approximation is achieved, i.e., an optimal control which yields the minimum possible tracking term $\calF$. In the following, we denote by $u_\alpha$ the minimizer of $\calE^z$ (which depends on $\alpha$ via the definition \eqref{eq:reg} of $g$) for given $\alpha>0$.

\begin{proposition}[$\Gamma$-convergence for vanishing regularization]
    For given $z\in Y$, let $M:=\inf_{u\in U}\norm{S(u)-z}_Y$ and $\calO:=\{u\in U:\norm{S( u)-z}_Y=M\}$.
    If $S$ satisfies \ref{enm:weakWeakContinuity}, then with respect to weak convergence in $U$ we have
    \begin{equation}
        \Gamma\text{-}\lim_{\alpha\to0}\frac1\alpha\left(\calE^z-\frac{M^2}2\right)
        =\delta_{\calO}+\calG_1,
    \end{equation}
    where
    \begin{equation}
        \calG_1(u)=\int_{\Omega} g_1^{**}(u(x))\,\dd x
        \quad\text{ for }\quad
        g_1(u)=c(u) + \delta_{\calM}(u).
    \end{equation}
\end{proposition}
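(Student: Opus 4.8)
The plan is to verify the two defining conditions of $\Gamma$-convergence with respect to weak $U$-convergence: the $\liminf$ inequality for arbitrary weakly convergent sequences, and the existence of a recovery sequence for each $u\in U$. Write $\calE^z_\alpha$ for the energy $\calE^z$ with regularization parameter $\alpha$ (so $g=g_\infty^{**}$ with $g_\infty=\alpha c+\delta_\calM$), and set $F_\alpha(u):=\tfrac1\alpha\bigl(\calE^z_\alpha(u)-\tfrac{M^2}2\bigr)$. The key structural fact I would establish first is an identity for the rescaled integrand: since $g_\infty^{**}=(\alpha c+\delta_\calM)^{**}$ and taking the biconjugate commutes with positive scaling, one has $\tfrac1\alpha g_\infty^{**}=\bigl(c+\tfrac1\alpha\delta_\calM\bigr)^{**}=(c+\delta_\calM)^{**}=g_1^{**}$ because $\delta_\calM$ and $\tfrac1\alpha\delta_\calM$ are the same function (both are $0$ on $\calM$ and $\infty$ off it). Hence $\calG(u)=\alpha\,\calG_1(u)$ exactly, so that
\begin{equation*}
    F_\alpha(u)=\frac{1}{\alpha}\left(\frac12\norm{S(u)-z}_Y^2-\frac{M^2}2\right)+\calG_1(u).
\end{equation*}
This reduces everything to understanding the limit of the first, $\alpha$-singular term, whose coefficient $\tfrac12\norm{S(u)-z}_Y^2-\tfrac{M^2}2$ is nonnegative by definition of $M$ and vanishes precisely on $\calO$.

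For the $\liminf$ inequality, take $u_n\rightharpoonup u$ in $U$ and $\alpha_n\to0$. By \ref{enm:weakWeakContinuity} and weak lower semicontinuity of $\norm{\cdot}_Y$, $\liminf_n\tfrac12\norm{S(u_n)-z}_Y^2\ge\tfrac12\norm{S(u)-z}_Y^2\ge\tfrac{M^2}2$; and by convexity of $g_1^{**}$, $\liminf_n\calG_1(u_n)\ge\calG_1(u)$. If $u\notin\calO$, the first term is $+\infty$ in the limit (a fixed positive quantity divided by $\alpha_n\to0$), matching $\delta_\calO(u)=\infty$; the subtlety is that $\norm{S(u_n)-z}_Y^2$ need not be $\ge M^2$ for each finite $n$, so one cannot discard the singular term term-by-term. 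I would handle this by splitting: write $\tfrac1{\alpha_n}\bigl(\tfrac12\norm{S(u_n)-z}_Y^2-\tfrac{M^2}2\bigr)\ge\tfrac1{\alpha_n}\bigl(\tfrac12 a_n^2-\tfrac{M^2}2\bigr)$ where $a_n:=\norm{S(u_n)-z}_Y\ge 0$, note $\liminf a_n\ge\norm{S(u)-z}_Y=:a>M$, and conclude that for large $n$, $a_n^2-M^2$ is bounded below by a positive constant, forcing the whole expression to $+\infty$. If $u\in\calO$, then $\delta_\calO(u)=0$ and the claim is just $\liminf_n F_{\alpha_n}(u_n)\ge\calG_1(u)$, which follows from dropping the (nonnegative) singular term and using lower semicontinuity of $\calG_1$.

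For the recovery sequence, fix $u$. If $u\notin\calO$ or $\calG_1(u)=\infty$, the limit functional is $+\infty$ and any sequence works (e.g. the constant one), so assume $u\in\calO$ and $\calG_1(u)<\infty$. Here the natural candidate is the constant sequence $u_n\equiv u$: then the singular term is identically zero (since $u\in\calO$ makes $\tfrac12\norm{S(u)-z}_Y^2=\tfrac{M^2}2$), so $F_{\alpha_n}(u)=\calG_1(u)$ for all $n$, giving $\limsup_n F_{\alpha_n}(u)=\calG_1(u)=\delta_\calO(u)+\calG_1(u)$ exactly. This is where the problem is unexpectedly clean, because the singular coefficient vanishes identically rather than merely in the limit. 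The only place requiring care is the $\liminf$ inequality argument on the set $\{u\notin\calO\}$ sketched above — one must make sure the elementary estimate separating $a_n$ from $M$ is rigorous, using that $x\mapsto x^2$ is continuous and $\liminf a_n>M\ge0$. Once that is in place, the two $\Gamma$-convergence conditions are verified and the proposition follows.
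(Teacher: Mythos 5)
Your proof is correct and follows essentially the same route as the paper's (constant recovery sequence, $\liminf$ inequality nontrivial only for $u\notin\calO$), with the useful addition of making explicit the scaling identity $\tfrac1\alpha\calG=\calG_1$ that the paper leaves implicit. One small remark: the ``subtlety'' you flag is not actually there, since $M=\inf_{u\in U}\norm{S(u)-z}_Y$ implies $\norm{S(u_n)-z}_Y\geq M$ for \emph{every} $n$, so the singular term is pointwise nonnegative; your estimate is nonetheless valid.
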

\begin{proof}
    The $\limsup$ inequality is trivial using the constant sequence; for the $\liminf$ inequality we only have to consider a sequence $u_\alpha\rightharpoonup u\notin \calO$.
    In that case,
    \begin{equation}
        \liminf_{\alpha\to0}\norm{S(u_\alpha)-z}_Y\geq\norm{S(u)-z}_Y>M
    \end{equation}
    so that
    \begin{displaymath}
        \frac1\alpha\left(\min_{u \in U} \frac12\norm{S(u) - z}_Y^2 + \int_{\Omega} g(u(x))\,\dd x-\frac{M^2}2\right)\to\infty.
    \end{displaymath}
\end{proof}

\begin{corollary}[approximation of target]
    Under the conditions of the previous proposition, if $\calO\neq\emptyset$, then any family $\{u_\alpha\}_{\alpha>0}$ of minimizers of $\calE^z$ contains a subsequence converging weakly to a minimizer $\bar u\in \calO$ of $\calG_1$.
\end{corollary}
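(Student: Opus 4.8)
The plan is to apply the fundamental theorem of $\Gamma$-convergence in exactly the form recalled before the vanishing-regularization proposition: once $\Gamma$-convergence is available (and it is, by the previous proposition) together with equicoercivity, any (subsequential) limit of minimizers is a minimizer of the $\Gamma$-limit. First I would remove the rescaling: for fixed $\alpha>0$ the functional $\frac1\alpha\left(\calE^z-\frac{M^2}2\right)$ is an increasing affine function of $\calE^z$, so it has the same minimizers as $\calE^z$; hence $\{u_\alpha\}_{\alpha>0}$ is a family of minimizers (which exist for every $\alpha$ by \cref{thm:existence}) of the functionals $\frac1\alpha\left(\calE^z-\frac{M^2}2\right)$, and by the previous proposition these $\Gamma$-converge, with respect to weak convergence in $U$, to $\delta_\calO+\calG_1$.

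Next I would establish equicoercivity. Because $\dom g=\co\calM$ is bounded, every control of finite (rescaled) energy, in particular every $u_\alpha$, takes values in $\co\calM$ almost everywhere and is therefore bounded in $L^\infty(\Omega;\R^m)$ by a constant depending only on $\calM$; hence $\{u_\alpha\}$ is bounded in the Hilbert space $U$, and along any sequence $\alpha_n\to0$ one may extract a subsequence with $u_{\alpha_{n_k}}\rightharpoonup\bar u$ in $U$, the limit again being $\co\calM$-valued since that set is convex and closed, hence weakly closed. The fundamental theorem of $\Gamma$-convergence then yields that $\bar u$ minimizes $\delta_\calO+\calG_1$ over $U$: applying the $\liminf$ inequality along the chosen subsequence bounds $(\delta_\calO+\calG_1)(\bar u)$ from above by $\liminf_k\inf_U\frac1{\alpha_{n_k}}\left(\calE^z-\frac{M^2}2\right)$, and comparing with the constant recovery sequence of an arbitrary competitor $v\in U$ bounds this quantity by $(\delta_\calO+\calG_1)(v)$. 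Since $\calO\neq\emptyset$, this is precisely the statement that $\bar u\in\calO$ with $\calG_1(\bar u)\le\calG_1(v)$ for every $v\in\calO$, i.e.\ that $\bar u$ is a minimizer of $\calG_1$ in $\calO$.

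The only step that is not purely formal is the equicoercivity, i.e.\ the precompactness of finite-energy controls for the topology carrying the $\Gamma$-convergence; as in the earlier existence and Moreau--Yosida-limit arguments, this is handled simply by the boundedness of $\dom g$ together with the weak sequential compactness of bounded sets in the Hilbert space $U$. A secondary point worth keeping explicit is that $\delta_\calO+\calG_1$ must be proper for the notion of minimizer to be meaningful, which is guaranteed by $\calO\neq\emptyset$ and the finiteness of $\calG_1$ on $\co\calM$-valued controls. Beyond these routine checks the argument is a direct transcription of the $\Gamma$-convergence template already used twice in \cref{sec:existence}, so I do not anticipate any further difficulty.
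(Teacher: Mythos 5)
Your argument is correct and is exactly the route the paper takes: the paper's proof simply invokes equi(mild) coercivity of the rescaled energies together with the $\Gamma$-convergence of the previous proposition and cites the fundamental theorem of $\Gamma$-convergence from Braides. You have merely written out the two routine ingredients (boundedness of finite-energy controls via $\dom g=\co\calM$, and the affine rescaling not changing minimizers) that the paper leaves implicit.
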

\begin{proof}
    This follows from the equimild coerciveness of the energies and the $\Gamma$-convergence; see \cite[Def.~1.19 and Thm.~1.21]{Braides:2002}.
\end{proof}

\section{Vector-valued multibang penalty}\label{sec:penalty}

To implement the general framework of \cref{sec:existence}, we need explicit characterizations of the Fenchel conjugate and its subdifferential as well as its Moreau--Yosida regularization
for the multibang penalty \eqref{eq:reg}. Recall that $\calG$ is defined as an integral functional for the proper, convex, and lower semicontinuous integrand
\begin{equation}\label{eq:multibangPenalty}
    g = \left(\alpha c(\cdot) +\delta_\calM\right)^{**}=g_\infty^{**}.
\end{equation}
We can thus proceed by pointwise computation.

We first summarize the general procedure. Since $g^* = (g_\infty^{**})^* = (g_\infty^*)^{**} = g_\infty^*$, the Fenchel conjugate of $g$ is given by
\begin{equation}\label{eq:conj}
    g^*(q) = g_\infty^*(q) \defgl \sup_{v\in\R^m} \scalprod{v,q} - g_\infty(v) = \max_{v\in \calM}\, \scalprod{v,q} - \alpha c(v).
\end{equation}
Hence, $g^*$ is the maximum of a finite number of convex and continuous functions of $q$, and we can thus compute its subdifferential (without computing $g^*$ first!) using the maximum rule; see, e.g., \cite[Prop.~4.5.2, Rem.~4.5.3]{Schirotzek:2007}. Setting
\begin{equation*}
    g_v^*(q) \defgl \scalprod{v,q} - \alpha c(v),
\end{equation*}
we have
\begin{equation}\label{eq:conj_subdiff}
    \partial g^*(q) = \co \bigcup_{\substack{v\in\calM:\\ g^*(q) = g_v^*(q)}}\partial g_v^*(q) = \co \set{v\in\calM}{g^*(q) = g_v^*(q)}
\end{equation}
with $\co$ denoting the convex hull.
Obviously, the subdifferential $\partial g^*$ is piecewise constant.
If we denote the elements of $\calM$ by $\bar u_i$ for $i$ from some index set $I$, then $\partial g^*$ takes the value $\co\,\{\bar u_{i_1},\ldots,\bar u_{i_k}\}$ with $i_1,\ldots,i_k\in I$ on
\begin{equation*}
    Q_{i_1\ldots i_k}=\set{q\in\R^m}{g^*(q)=g_{\bar u_i}^*(q)\text{ if and only if }i\in\{i_1,\ldots,i_k\}}.
\end{equation*}
For the proximal mapping
\begin{equation*}
    \prox_{\gamma g^*}(q) := \argmin_{w\in\R^m} \frac1{2\gamma} |w-q|_2^2 + g^*(w) = (\Id + \gamma \partial g^*)^{-1}(q)
\end{equation*}
we then make use of the equivalence
\begin{equation}\label{eq:proxMapRelation}
    w = (\Id + \gamma \partial g^*)^{-1}(q)\qquad \Leftrightarrow\qquad q \in (\Id + \gamma \partial g^*)(w) = \{w\} + \gamma \partial g^*(w)
\end{equation}
and follow the case distinction in the maximum rule \eqref{eq:conj_subdiff}.
In detail, we first define $Q_{i_1\ldots i_k}^\gamma$ to be the image of $Q_{i_1\ldots i_k}$ under $(\Id+\gamma\partial g^*)$,
\begin{equation*}
    Q_{i_1\ldots i_k}^\gamma
    =(\Id+\gamma\partial g^*)(Q_{i_1\ldots i_k})
    =Q_{i_1\ldots i_k}+\gamma\co\,\{\bar u_{i_1},\ldots,\bar u_{i_k}\}.
\end{equation*}
The preimage $w\in Q_{i_1\ldots i_k}$ of $q\in Q_{i_1\ldots i_k}^\gamma$ under $(\Id+\gamma\partial g^*)$ is thus obtained by solving the linear system of equations
\begin{equation*}
    \begin{aligned}
        0&=g_{\bar u_{i_l}}^*(w)-g_{\bar u_{i_1}}^*(w),\quad l=2,\ldots,k,\\
        q&=w+\gamma(\lambda_1\bar u_{i_1}+\ldots+\lambda_k\bar u_{i_k}),\\
        1&=\lambda_1+\ldots+\lambda_k,
    \end{aligned}
\end{equation*}
for $w\in\R^m$ and the convex combination coefficients $\lambda_1,\ldots,\lambda_k\in\R$.
Let us express the solution of this system (obtained by inverting the system matrix; if not invertible use the minimum norm solution) as
\begin{equation}\label{eqn:multibangGeneralSystem}
    w=A_{i_1\ldots i_k}q+b_{i_1\ldots i_k}\,,\quad
    \lambda_l=A_{i_1\ldots i_k;l}q+b_{i_1\ldots i_k;l},
\end{equation}
for some $A_{i_1\ldots i_k}\in\R^{m\times m}$, $b_{i_1\ldots i_k}\in\R^m$ and $A_{i_1\ldots i_k;l}\in\R^{1\times m}$, $b_{i_1\ldots i_k;l}\in\R$, $l=1,\ldots,k$.
The Moreau--Yosida regularization $h_\gamma=(\partial g^*)_\gamma$ of $\partial g^*$ is then given by
\begin{equation}\label{eq:my_reg}
    h_\gamma(q)=(\partial g^*)_{\gamma}(q) = \tfrac{1}{\gamma}\left(q - \prox_{\gamma g^*}(q)\right)=\tfrac1\gamma(q-A_{i_1\ldots i_k}q-b_{i_1\ldots i_k}).
\end{equation}
Since $h_\gamma = (\partial g^*)_\gamma$ is continuous and piecewise continuously differentiable ($PC^1$), its Clarke subdifferential \eqref{eq:newtonderiv} at $q\in \R^m$ is given by the convex hull of the derivatives of the branches active at $q$; see, e.g., \cite[Thm.~14.7]{CV:2020}. We can thus take as a Newton derivative
\begin{equation*}
    D_Nh_\gamma(q)=\tfrac1\gamma(\mathrm{Id}-A_{i_1\ldots i_k}).
\end{equation*}
To compute $h_\gamma(q)$ and $D_Nh_\gamma(q)$ for an arbitrary $q\in\R^m$ it remains to identify the set $Q_{i_1\ldots i_k}^\gamma$ in which $q$ lies.
To this end, note that $q\in Q_{i_1\ldots i_k}^\gamma$ if and only if
\begin{align*}
    \lambda_l&=A_{i_1\ldots i_k;l}q+b_{i_1\ldots i_k;l}\in[0,1]\quad\text{for }l=1,\ldots,k
    \quad\text{and}\\
    w&=A_{i_1\ldots i_k}q+b_{i_1\ldots i_k}\in Q_{i_1\ldots i_k},
\end{align*}
since $w$ represents the preimage of $q$ under $(\mathrm{Id}+\gamma\partial g^*)$
and $\lambda_1,\ldots,\lambda_k$ represent the convex combination coefficients such that $w+\lambda_1\bar u_{i_1}+\ldots+\lambda_k\bar u_{i_k}=q$.
To check the condition $w\in Q_{i_1\ldots i_k}$ it suffices to check that there is no $i\notin\{i_1,\ldots,i_k\}$ with $g_{\bar u_i}^*(w)\geq g_{\bar u_{i_1}}^*(w)$.

\Cref{sec:multibang:bloch,sec:multibang:elast} make this calculation explicit for the first two choices of $\calM$ introduced in the introduction
and the quadratic cost $c(v) = \frac12|v|_2^2$,
which is particularly useful for generating more efficient code. We finally address in \cref{sec:multibang:general} the algorithmic evaluation in the general case as relevant for the multimaterial branched transport example.

\subsection{Radially distributed control values}\label{sec:multibang:bloch}

Here, we take as set $\calM\subset\R^2$ of admissible control values the vector $0$ together with vectors of fixed amplitude $\omega_0>0$ and $M>2$ equidistributed phases
\begin{equation*}
    0\leq\theta_1<\cdots<\theta_M<2\pi
\end{equation*}
(where we shall assume $\theta_{i+1}-\theta_i<\pi$ for $i=1,\ldots,M-1$ and $\theta_1-(\theta_M-2\pi)<\pi$),
that is,
\begin{equation*}\label{eq:setBloch}
    \calM = \left\{ \begin{pmatrix} 0 \\ 0 \end{pmatrix}, \begin{pmatrix} \omega_0 \cos\theta_1 \\ \omega_0 \sin \theta_1 \end{pmatrix}, \dots, \begin{pmatrix} \omega_0 \cos\theta_M \\ \omega_0 \sin \theta_M \end{pmatrix}\right\} \eqqcolon \left\{ \u_0, \u_1, \dots \u_M\right\}.
\end{equation*}

In the following it will be helpful to identify an angle $\theta\in[0,2\pi)$ with the corresponding point $\vec\theta=(\cos\theta,\sin\theta)$ on the unit circle $S^1$.
Let $\phi_i$ denote the midpoint between $\theta_i$ and $\theta_{i+1}$ (identifying $\theta_{M+1}=\theta_1$ for simplicity), that is, $\vec\phi_i=(\vec\theta_i+\vec\theta_{i+1})/|\vec\theta_i+\vec\theta_{i+1}|_2$, and introduce the circular sectors
\begin{equation*}
    C_i=\set{\omega\vec\theta\in\R^2}{\theta\in(\phi_i,\phi_{i+1}),\,\omega\geq0}.
\end{equation*}
Here, $\theta\in(\phi_i,\phi_{i+1})$ is to be understood $2\pi$-periodically, that is, $\phi_{M+1}$ is identified with $\phi_1$, and $(\phi_i,\phi_{i+1})$ with $\phi_{i+1}<\phi_i$ is interpreted as $(\phi_i,\phi_{i+1}+2\pi)$.

\paragraph{Fenchel conjugate}

Using the equivalence of angles and sectors introduced above, it is straightforward to see
\begin{equation*}
    \scalprod{q, \u_i} \geq \scalprod{q, \u_j}\;\text{ for all } q\in\overline{C_i},\,j\neq0.
\end{equation*}
Thus, inserting the concrete choice of $\calM$ into \eqref{eq:conj}, we obtain
\begin{equation*}
    g^*(q) =
    \begin{cases}
        0 & \text{if } \scalprod{q, \u_i} \leq \frac\alpha2\omega_0^2\text{ for all } 1\leq i\leq M,\\
        \scalprod{q,\u_i} - \frac\alpha2\omega_0^2 & \text{if }q\in\overline{C_i}\text{ and }\scalprod{q, \u_i} \geq \frac\alpha2\omega_0^2.
    \end{cases}
\end{equation*}
Let us therefore introduce the sets (cf.~\cref{fig:subdomainsbloch:subdiff})
\begin{align*}
    Q_0 &\defgl \set{ q \in \R^2}{\scalprod{q, \u_i} < \tfrac\alpha2\omega_0^2\text{ for all } 1\leq i\leq M} ,\\
    Q_i &\defgl \set{ q \in C_i}{\scalprod{q, \u_i} > \tfrac\alpha2\omega_0^2},&1\leq i\leq M,\\
    Q_{i_1\ldots i_k} &\defgl \bigcap_{i\in\{i_1,\ldots,i_k\}}\overline{Q_i}\setminus\bigcup_{i\notin\{i_1,\ldots,i_k\}}\overline{Q_i},&0\leq i_1,\ldots,i_k\leq M.
\end{align*}
With this notation we obtain
\begin{equation*}
    g^*(q)=
    \begin{cases}
        0 & \text{if } q\in\overline{Q_0},\\
        \scalprod{q,\u_i} - \frac\alpha2\omega_0^2 & \text{if }q\in\overline{Q_i},\quad 1\leq i\leq M.
    \end{cases}
\end{equation*}

\paragraph{Subdifferential}

From the maximum rule \eqref{eq:conj_subdiff}, we directly obtain
\begin{equation}\label{eq:conj_subdiff_radial}
    \partial g^*(q)=
    \begin{cases}
        \{\u_i\}&\text{if }q\in Q_i,\quad\qquad 0\leq i\leq M,\\
        \co\{\u_{i_1},\ldots,\u_{i_k}\}&\text{if }q\in Q_{i_1\ldots i_k},\quad\,0\leq i_1,\ldots,i_k\leq M.
    \end{cases}
\end{equation}

\begin{figure}[t]
    \begin{subfigure}[t]{0.45\linewidth}
        \centering
        \begin{tikzpicture}[scale=0.8]
    \draw[gray,->](0,-3.5) -- (0,3.5) node[left] {\small $q_2$};
    \draw[gray,->](-3.5,0) -- (3.5,0) node[below] {\small $q_1$};
    \draw (0,0) node[circle,fill=white,inner sep=0pt] {$Q_0$};
    \foreach \x  in {1,...,6}{
        \coordinate (q\x) at (20+\x*60:1.5cm);
        \coordinate (Q\x) at (20+\x*60:3.5cm);
        \node at (\x*60-10:2.5cm) {$Q_\x$};
        \draw (q\x) -- (Q\x);
    }
    \draw (q1) -- (q2) -- (q3) -- (q4) -- (q5) -- (q6) -- cycle;
    \foreach \x [evaluate=\x as \xnext using {int(mod(\x,6)+1)}] in {1,...,6}{
        \draw (\x*60-10:1.3cm) node[circle,fill=white,inner sep=0pt] {\tiny $Q_{0\x}$};
        \draw (20+\x*60:2.5cm) node[circle,fill=white,inner sep=0pt] {\tiny $Q_{\x\xnext}$};
        \draw (20+\x*60:1.5cm) node[circle,fill=white,inner sep=0pt] {\tiny $Q_{0\x\xnext}$};
    }
\end{tikzpicture}
        \caption{subgradient $\partial g^*$}\label{fig:subdomainsbloch:subdiff}
    \end{subfigure}
    \hfill
    \begin{subfigure}[t]{0.45\linewidth}
        \centering
        \begin{tikzpicture}[scale=0.8]
    \draw[gray,->](0,-3.5) -- (0,3.5) node[left] {\small $q_2$};
    \draw[gray,->](-3.5,0) -- (3.5,0) node[below] {\small $q_1$};
    \draw (0,0) node[circle,fill=white] {$Q^\gamma_0$};
    \foreach \x [evaluate=\x as \xnext using {int(mod(\x,6)+1)}] in {1,...,6}{
        \coordinate (q\x) at (20+\x*60:1.5cm);
        \coordinate (q\x1) at (20+\x*60-9:2.2cm);
        \coordinate (q\x2) at (20+\x*60+9:2.2cm);
        \coordinate (Q\x1) at (20+\x*60-6:3.5cm);
        \coordinate (Q\x2) at (20+\x*60+6:3.5cm);
        \node at (\x*60-10:3cm) {$Q^\gamma_\x$};
        \node at (\x*60-10:1.7cm) {\tiny $Q^\gamma_{0\x}$};
        \node at (20+\x*60:3cm) {\tiny $Q^\gamma_{\x\xnext}$};
        \node at (20+\x*60:1.95cm) {\tiny $Q^\gamma_{0\x\xnext}$};
    }
    \foreach \x [evaluate=\x as \xnext using {int(mod(\x,6)+1)}] in {1,...,6}{
        \draw (Q\x1) -- (q\x1) -- (q\x) -- (q\x2) -- (Q\x2);
        \draw (q\x) -- (q\xnext);
        \draw (q\x1) -- (q\x2) -- (q\xnext1);
    }
\end{tikzpicture}
        \caption{Moreau--Yosida regularization $(\partial g^*)_\gamma$}\label{fig:subdomainsbloch:my}
    \end{subfigure}
    \caption{Subdomains for radially distributed $\calM$}
    \label{fig:subdomainsbloch}
\end{figure}
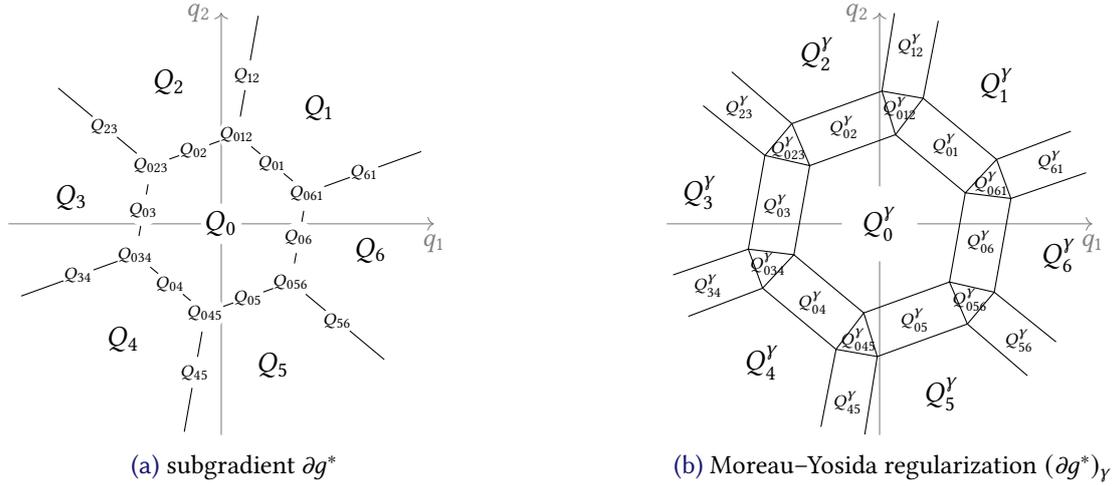

\paragraph{Proximal mapping}

Here, we proceed as follows: For each $Q_{i_1\ldots i_k}$, we
\begin{enumerate}
    \item compute the set $Q_{i_1\ldots i_k}^\gamma\defgl(\Id+\gamma\partial g^*)(Q_{i_1\ldots i_k})$;
    \item solve for $w\in Q_{i_1\ldots i_k}$ the relation $q\in\{w\}+\gamma\partial g^*(w)$ for arbitrary $q\in Q_{i_1\ldots i_k}^\gamma$.
\end{enumerate}
By \eqref{eq:proxMapRelation}, we then have $w=\prox_{\gamma g^*}(q)$.
The details are provided in \cref{tab:prox_radial}, while the sets  $Q_{i_1\ldots i_k}^\gamma$ are visualized in \cref{fig:subdomainsbloch:my}.
\begin{table}
    \caption{Computation of proximal map for radially distributed control values ($i+1$ is to be understood modulo $M$)}
    \label{tab:prox_radial}
    \begin{tabular}{llll}
        \toprule
        $Q_{i_1\ldots i_k}$ & $(\Id+\gamma\partial g^*)(w)$  & $Q_{i_1\ldots i_k}^\gamma$               & $(\Id+\gamma\partial g^*)^{-1}(q)$\\
        \midrule
        $Q_0$               & $w$                            & $Q_0$                                    & $q$\\
        $Q_i$               & $w+\gamma \u_i$                 & $Q_i+\gamma \u_i$                         & $q-\gamma \u_i$\\
        $Q_{0,i}$            & $w+\gamma\co\{0,\u_i\}$         & $Q_{0,i}+[0,\gamma]\u_i$                   & $q - \left(\frac{\scalprod{q,\u_i}}{\omega_0^2} - \frac\alpha2\right)\u_i$\\
        $Q_{i,i+1}$         & $w+\gamma\co\{\u_i,\u_{i+1}\}$   & $Q_{i,i+1}+\gamma\co\{\u_i,\u_{i+1}\}$     & $q-\frac{\gamma(\u_i + \u_{i+1})}2-\frac{\scalprod{q,\u_i-\u_{i+1}}(\u_i-\u_{i+1})}{|\u_i-\u_{i+1}|_2^2}$\\
        $Q_{0,i,i+1}$       & $w+\gamma\co\{0,\u_i,\u_{i+1}\}$ & $Q_{0,i,i+1}+\gamma\co\{0,\u_i,\u_{i+1}\}$ & $\alpha\left(\frac{\omega_0}{|\u_i + \u_{i+1}|_2}\right)^2 (\u_i + \u_{i+1})$\\
        \bottomrule
    \end{tabular}
\end{table}

To explain the case $Q_{0,i}$, note that for $q\in Q_{0,i}^\gamma$ we must have by definition of the set $Q_{0,i}^\gamma$ that
\begin{equation*}
    (\Id+\gamma\partial g^*)^{-1}(q)=q-\lambda \u_i \in Q_{0,i}\subset\set{v\in\R^2}{\scalprod{v,\u_i}=\frac\alpha2\omega_0^2}
\end{equation*}
for an appropriate choice of $\lambda\in[0,\gamma]$. Thus,
\begin{equation*}
    \scalprod{q-\lambda \u_i,\u_i}=\tfrac\alpha2\omega_0^2
    \quad\text{ and so }\quad
    \lambda=\frac{\scalprod{q,\u_i}}{\omega_0^2} - \frac\alpha2.
\end{equation*}

Likewise, for $q\in Q_{i,i+1}^\gamma$ we must have
\begin{equation*}
    (\Id+\gamma\partial g^*)^{-1}(q)=q-\lambda \u_i-(\gamma-\lambda) \u_{i+1}\in Q_{i,i+1}\subset(\u_i-\u_{i+1})^\perp
\end{equation*}
for some $\lambda\in[0,\gamma]$. Thus,
\begin{equation*}
    0=\scalprod{q-\lambda \u_i-(\gamma-\lambda) \u_{i+1},\u_i-\u_{i+1}}=\scalprod{q,\u_i-\u_{i+1}}+(\tfrac\gamma2-\lambda)|\u_i-\u_{i+1}|_2^2
\end{equation*}
and so
\begin{equation*}
    \lambda=\frac\gamma2+\frac{\scalprod{q,\u_i-\u_{i+1}}}{|\u_i-\u_{i+1}|_2^2}.
\end{equation*}

Finally, note that $Q_{0,i,i+1}=\{\alpha(\frac{\omega_0}{|\u_i + \u_{i+1}|_2})^2 (\u_i + \u_{i+1})\}$ only contains a single element,
which must therefore be equal to $(\Id+\gamma\partial g^*)^{-1}(q)$ for all $q\in Q_{0,i,i+1}^\gamma$.

\paragraph{Moreau--Yosida regularization}

Inserting the above into definition \eqref{eq:my_reg} of the Moreau--Yosida regularization yields
\begin{equation}\label{eq:my_bloch}
    (\partial g^*)_{\gamma}(q) =
    \begin{cases}
        0 & \text{ if } q \in Q_0^{\gamma}, \\
        \u_i & \text{ if } q \in Q_i^{\gamma}, \\
        \left(\tfrac{\scalprod{q, \u_i}}{\gamma \omega_0^2} - \tfrac{\alpha}{2\gamma}\right) \u_i & \text{ if } q \in Q_{0,i}^{\gamma}, \\
        \frac{\u_i + \u_{i+1}}2+\frac{\scalprod{q,\u_i-\u_{i+1}}(\u_i-\u_{i+1})}{\gamma|\u_i-\u_{i+1}|_2^2} & \text{ if } q \in Q_{i,i+1}^\gamma,\\
        \frac{q}{\gamma} - \frac\alpha{\gamma}\left(\frac{\omega_0}{|\u_i + \u_{i+1}|_2}\right)^2 \left(\u_i + \u_{i+1}\right) &\text{ if } q \in Q_{0,i,i+1}^\gamma.
    \end{cases}
\end{equation}
Finally, in a numerical implementation it will be necessary to efficiently identify for a given $q\in\R^2$ the set $Q_{i_1\ldots i_k}^\gamma$ in which it is contained.
To this end, determine $i_q,j_q,k_q\in \{1,\ldots,M\}$ via
\begin{equation*}
    q\in\overline{C_{i_q}}\,,\qquad
    q-\gamma \u_{i_q}\in\overline{C_{j_q}}\,,\qquad
    q - \left(\frac{\scalprod{q,\u_{i_q}}}{\omega_0^2} - \frac\alpha2\right)\u_{i_q}\in\overline{C_{k_q}},
\end{equation*}
and set
\begin{equation*}
    \rho_q:=\scalprod{q,\u_{i_q}}\,,\qquad
    \sigma_q:=\scalprod{q-\tfrac\gamma2(\u_{i_q}+\u_{j_q}),\u_{i_q}+\u_{j_q}}.
\end{equation*}
Now it is straightforward to identify the correct subdomain via
\begin{align*}
    Q_0^\gamma&=\set{q\in\R^2}{\rho_q<\tfrac\alpha2\omega_0^2},\\
    Q_i^\gamma&=\set{q\in\R^2}{\rho_q>(\tfrac\alpha2+\gamma)\omega_0^2,\,i_q=i,\,j_q=i},\\
    Q_{0,i}^\gamma&=\set{q\in\R^2}{\tfrac\alpha2\omega_0^2\leq\rho_q\leq (\tfrac\alpha2+\gamma)\omega_0^2,\,i_q=i,\,k_q=i},\\
    Q_{i,i+1}^\gamma&=\set{q\in\R^2}{\{i,i+1\}=\{i_q,j_q\},\,\sigma_q>\alpha\omega_0^2},\\
    Q_{0,i,i+1}^\gamma&=\set{q\in\R^2}{\{i,i+1\}=\{i_q,i_q+\sign(\u_{i_q}\times q)\},\,k_q\neq i_q,\,\sigma_q\leq\alpha\omega_0^2}.
\end{align*}

\paragraph{Newton derivative}

We can take as a Newton derivative of \eqref{eq:my_bloch} at $q$ any element of the convex hull of the derivatives of the branches active at $q$; we choose here
\begin{equation}\label{eq:newton_bloch}
    D_N h_{\gamma}(q) = \begin{cases}
        0 & \text{ if } q \in Q_i^{\gamma}, \\
        \frac{1}{\gamma\omega_0^2} \u_i\u_i^T & \text{ if } q \in Q_{0,i}^{\gamma},\\
        \frac{1}{\gamma\abs{\u_i-\u_{i+1}}_2^2}(\u_i - \u_{i+1})(\u_i - \u_{i+1})^T & \text{ if } q \in Q_{i,i+1}^{\gamma},\\
        \frac{1}{\gamma}\Id & \text{ if } q \in Q_{0,i,i+1}^{\gamma}.
    \end{cases}
\end{equation}

\subsection{Concentric corners}\label{sec:multibang:elast}

We now address the case of admissible control values of different magnitudes, where we consider for the sake of an example the concrete set
\begin{equation}\label{eq:M_elast}
    \begin{aligned}
        \calM &= \left\{ \begin{pmatrix} 1 \\ 1 \end{pmatrix}, \begin{pmatrix} 1 \\ -1 \end{pmatrix}, \begin{pmatrix} -1 \\ 1 \end{pmatrix}, \begin{pmatrix} -1 \\ -1 \end{pmatrix}, \begin{pmatrix} 2 \\ 2 \end{pmatrix}, \begin{pmatrix} 2 \\ -2 \end{pmatrix}, \begin{pmatrix} -2 \\ 2 \end{pmatrix}, \begin{pmatrix} -2 \\ -2 \end{pmatrix}\right\} \\[0.5em]
        &= \left\{\u_{1,1}^1, \u_{1,-1}^1, \u_{-1,1}^1, \u_{-1,-1}^1, \u_{1,1}^2, \u_{1,-1}^2, \u_{-1,1}^2, \u_{-1,-1}^2\right\}.
    \end{aligned}
\end{equation}

\paragraph{Fenchel conjugate}
Again inserting $\calM$ into \eqref{eq:conj}, we see that the maximum is attained either by $v=(q_1/|q_1|,q_2/|q_2|)$ or by $v=2(q_1/|q_1|,q_2/|q_2|)$,
where in the case $q_i=0$ we may define $q_i/|q_i|\in\{-1,1\}$ arbitrarily. Hence we obtain after some algebraic manipulations
\begin{equation*}
    g^*(q)
    =\max\left\{\abs{q}_1-\alpha,2\abs{q}_1-4\alpha\right\}
    =
    \begin{cases}
        \abs{q}_1 - \alpha & \text{if }\abs{q}_1 \leq 3\alpha,\\
        2\abs{q}_1 - 4\alpha & \text{if }\abs{q}_1 \geq 3\alpha.
    \end{cases}
\end{equation*}

\paragraph{Subdifferential}

From \eqref{eq:conj_subdiff}, we directly obtain
\begin{equation*}
    \partial g^*(q) = \co\bigcup_{\substack{i\in\{1,2\}: \\ g^*(q) = g_i^*(q)}} \partial g_i^*(q)
    \quad\text{ for }\quad
    \begin{cases}
        g_1^*(q) = \abs{q}_1 - \alpha,\\
        g_2^*(q) = 2\abs{q}_1 - 4\alpha.
    \end{cases}
\end{equation*}
In the above we have
\begin{equation*}
    \partial g_1^*(q) = \begin{pmatrix} \sign(q_1) \\ \sign(q_2)\end{pmatrix},
    \qquad
    \partial g_2^*(q) = 2\begin{pmatrix} \sign(q_1) \\ \sign(q_2)\end{pmatrix},
\end{equation*}
where $\sign$ denotes the set-valued sign of convex analysis, i.e., $\sign(0)=[-1,1]$.
Therefore we obtain
\begin{equation*}
    \partial g^*(q) = \left.\begin{cases}
            \partial g_1^*(q) & \text{if }\abs{q}_{1} < 3\alpha\\
            \partial g_2^*(q) & \text{if }\abs{q}_{1} > 3\alpha\\
            \co\{\partial g_1^*(q),\partial g_2^*(q) \} & \text{if }\abs{q}_{1} = 3\alpha
    \end{cases}\right\}
    = \begin{pmatrix} \sign(q_1) \\ \sign(q_2)\end{pmatrix}\cdot
    \begin{cases}
        1 & \text{if }\abs{q}_{1} < 3\alpha,\\
        2 & \text{if }\abs{q}_{1} > 3\alpha,\\
        [1,2] & \text{if }\abs{q}_{1} = 3\alpha.
    \end{cases}
\end{equation*}
For economy, let us introduce for $i,j,k\in\{-1,0,1\}$ the sets
\begin{equation*}
    I_k
    =\begin{cases}(-\infty,0)&\text{if }k=-1,\\
    \{0\}&\text{if }k=0\\(0,\infty),&\text{if }k=1,\\\end{cases}
    \qquad\text{and}\qquad
    Q_{ijk}=\set{q\in\R^2}{q_1\in I_i,\,q_2\in I_j,\,\abs{q}_1-3\alpha\in I_k}.
\end{equation*}
A visualization is given in \cref{fig:subdomainselast:subdiff}.
Note that the index $0$ always indicates a lower-dimensional structure; in particular, we have
\begin{equation*}
    Q_{0jk}\subset\overline{Q_{-1,j,k}}\cap\overline{Q_{1,j,k}},\quad
    Q_{i0k}\subset\overline{Q_{i,-1,k}}\cap\overline{Q_{i,1,k}},\quad
    Q_{ij0}\subset\overline{Q_{i,j,-1}}\cap\overline{Q_{i,j,1}}.
\end{equation*}
Using this notation, we can write the subdifferential as
\begin{equation}\label{eq:conj_subdiff_concentric}
    \partial g^*(q)=\begin{cases}
        \u_{ij}^{(k+3)/2} & \text{if }q\in Q_{ijk},\ i,j,k\in\{-1,1\},\\
        \co\big\{\u_{rs}^{(t+3)/2}: r,s,t\in\{-1,1\},\,|r-i|,|s-j|,|t-k|\leq1\big\} & \text{if }q\in Q_{ijk},\ 0\in\{i,j,k\},
    \end{cases}
\end{equation}
which provides more insight into its structure.
In particular, on each lower-dimensional $Q_{ijk}$ the subdifferential is the convex hull of the subdifferentials on the adjacent two-dimensional sets.

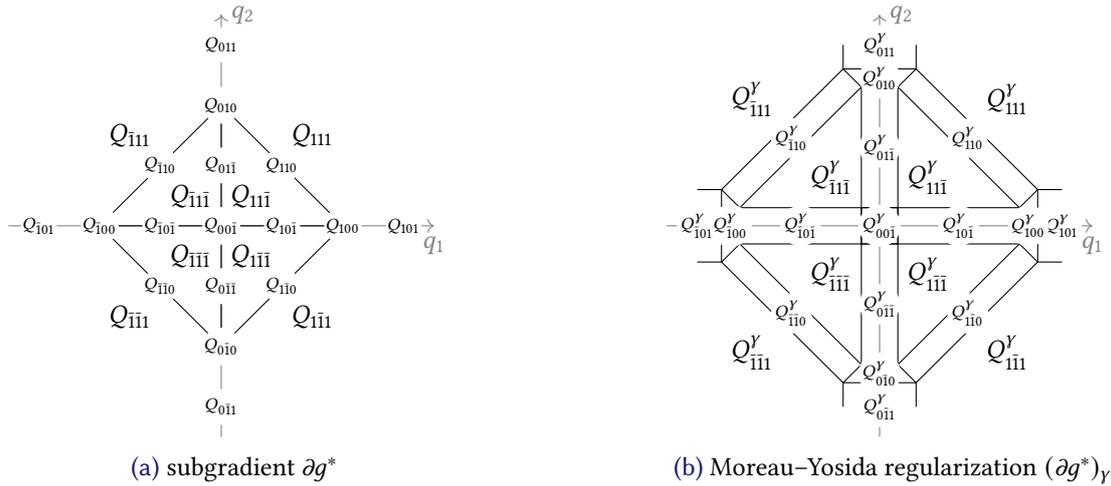
\begin{figure}[t]
    \begin{subfigure}[t]{0.45\linewidth}
        \centering
        \begin{tikzpicture}[scale=0.8]
    \draw[gray,->](0,-3.5) -- (0,3.5) node[right] {\small $q_2$};
    \draw[gray,->](-3.5,0) -- (3.5,0) node[below] {\small $q_1$};

    \draw (2,0) -- (0,2) -- (-2,0) -- (0,-2) -- cycle;
    \draw(0,-2) -- (0,2);
    \draw(-2,0) -- (2,0);

    \draw (0,0) node[circle,fill=white,inner sep=0pt] {\tiny $Q_{00\bar1}$};
    \draw (0.5,0.5) node[circle,fill=white,inner sep=0pt] {\small$Q_{11\bar1}$};
    \draw (0.5,-0.5) node[circle,fill=white,inner sep=0pt] {\small$Q_{1\bar1\bar1}$};
    \draw (-0.5,0.5) node[circle,fill=white,inner sep=0pt] {\small$Q_{\bar11\bar1}$};
    \draw (-0.5,-0.5) node[circle,fill=white,inner sep=0pt] {\small$Q_{\bar1\bar1\bar1}$};

    \draw (1.5,1.5) node[circle,fill=white,inner sep=0pt] {\small$Q_{111}$};
    \draw (1.5,-1.5) node[circle,fill=white,inner sep=0pt] {\small$Q_{1\bar11}$};
    \draw (-1.5,1.5) node[circle,fill=white,inner sep=0pt] {\small$Q_{\bar111}$};
    \draw (-1.5,-1.5) node[circle,fill=white,inner sep=0pt] {\small$Q_{\bar1\bar11}$};

    \draw (1,0) node[circle,fill=white,inner sep=0pt] {\tiny $Q_{10\bar1}$};
    \draw (0,1) node[circle,fill=white,inner sep=0pt] {\tiny $Q_{01\bar1}$};
    \draw (-1,0) node[circle,fill=white,inner sep=0pt] {\tiny $Q_{\bar10\bar1}$};
    \draw (0,-1) node[circle,fill=white,inner sep=0pt] {\tiny $Q_{0\bar1\bar1}$};

    \draw (3,0) node[circle,fill=white,inner sep=0pt] {\tiny $Q_{101}$};
    \draw (0,3) node[circle,fill=white,inner sep=0pt] {\tiny $Q_{011}$};
    \draw (-3,0) node[circle,fill=white,inner sep=0pt] {\tiny $Q_{\bar101}$};
    \draw (0,-3) node[circle,fill=white,inner sep=0pt] {\tiny $Q_{0\bar11}$};

    \draw (1,1) node[circle,fill=white,inner sep=0pt] {\tiny $Q_{110}$};
    \draw (1,-1) node[circle,fill=white,inner sep=0pt] {\tiny $Q_{1\bar10}$};
    \draw (-1,1) node[circle,fill=white,inner sep=0pt] {\tiny $Q_{\bar110}$};
    \draw (-1,-1) node[circle,fill=white,inner sep=0pt] {\tiny $Q_{\bar1\bar10}$};

    \draw (2,0) node[circle,fill=white,inner sep=0pt] {\tiny $Q_{100}$};
    \draw (0,2) node[circle,fill=white,inner sep=0pt] {\tiny $Q_{010}$};
    \draw (-2,0) node[circle,fill=white,inner sep=0pt] {\tiny $Q_{\bar100}$};
    \draw (0,-2) node[circle,fill=white,inner sep=0pt] {\tiny $Q_{0\bar10}$};
\end{tikzpicture}
        \caption{subgradient $\partial g^*$}\label{fig:subdomainselast:subdiff}
    \end{subfigure}
    \hfill
    \begin{subfigure}[t]{0.45\linewidth}
        \centering
        \begin{tikzpicture}[scale=0.8]
    \draw[gray,->](0,-3.5) -- (0,3.5) node[right] {\small $q_2$};
    \draw[gray,->](-3.5,0) -- (3.5,0) node[below] {\small $q_1$};

    \draw (0.3,0.3) -- (0.3,-0.3) -- (-0.3,-0.3) -- (-0.3,0.3) -- cycle;
    \draw (0.3,0.3) -- (0.3,2.3) -- (-0.3,2.3) -- (-0.3,0.3) -- cycle;
    \draw (0.3,-0.3) -- (0.3,-2.3) -- (-0.3,-2.3) -- (-0.3,-0.3) -- cycle;
    \draw (0.3,0.3) -- (2.3,0.3) -- (2.3,-0.3) -- (0.3,-0.3) -- cycle;
    \draw (-0.3,-0.3) -- (-2.3,-0.3) -- (-2.3,0.3) -- (-0.3,0.3) -- cycle;

    \draw (0.6,3) -- (0.6,2.6) -- (-0.6,2.6) -- (-0.6,3);
    \draw (0.6,-3) -- (0.6,-2.6) -- (-0.6,-2.6) -- (-0.6,-3);
    \draw (3,0.6) -- (2.6,0.6) -- (2.6,-0.6) -- (3,-0.6);
    \draw (-3,0.6) -- (-2.6,0.6) -- (-2.6,-0.6) -- (-3,-0.6);

    \draw (2.3,0.3) -- (0.3,2.3) -- (0.6,2.6) -- (2.6,0.6) -- cycle;
    \draw (2.3,-0.3) -- (0.3,-2.3) -- (0.6,-2.6) -- (2.6,-0.6) -- cycle;
    \draw (-2.3,0.3) -- (-0.3,2.3) -- (-0.6,2.6) -- (-2.6,0.6) -- cycle;
    \draw (-2.3,-0.3) -- (-0.3,-2.3) -- (-0.6,-2.6) -- (-2.6,-0.6) -- cycle;
    
    \draw (0,0) node[circle,fill=white,inner sep=0pt] {\tiny $Q^\gamma_{00\bar1}$};
    \draw (0.8,0.8) node[circle,fill=white,inner sep=0pt] {\small$Q^\gamma_{11\bar1}$};
    \draw (0.8,-0.8) node[circle,fill=white,inner sep=0pt] {\small$Q^\gamma_{1\bar1\bar1}$};
    \draw (-0.8,0.8) node[circle,fill=white,inner sep=0pt] {\small$Q^\gamma_{\bar11\bar1}$};
    \draw (-0.8,-0.8) node[circle,fill=white,inner sep=0pt] {\small$Q^\gamma_{\bar1\bar1\bar1}$};

    \draw (2.1,2.1) node[circle,fill=white,inner sep=0pt] {\small$Q^\gamma_{111}$};
    \draw (2.1,-2.1) node[circle,fill=white,inner sep=0pt] {\small$Q^\gamma_{1\bar11}$};
    \draw (-2.1,2.1) node[circle,fill=white,inner sep=0pt] {\small$Q^\gamma_{\bar111}$};
    \draw (-2.1,-2.1) node[circle,fill=white,inner sep=0pt] {\small$Q^\gamma_{\bar1\bar11}$};

    \draw (1.3,0) node[circle,fill=white,inner sep=0pt] {\tiny $Q^\gamma_{10\bar1}$};
    \draw (0,1.3) node[circle,fill=white,inner sep=0pt] {\tiny $Q^\gamma_{01\bar1}$};
    \draw (-1.3,0) node[circle,fill=white,inner sep=0pt] {\tiny $Q^\gamma_{\bar10\bar1}$};
    \draw (0,-1.3) node[circle,fill=white,inner sep=0pt] {\tiny $Q^\gamma_{0\bar1\bar1}$};

    \draw (3,0) node[circle,fill=white,inner sep=0pt] {\tiny $Q^\gamma_{101}$};
    \draw (0,3) node[circle,fill=white,inner sep=0pt] {\tiny $Q^\gamma_{011}$};
    \draw (-3,0) node[circle,fill=white,inner sep=0pt] {\tiny $Q^\gamma_{\bar101}$};
    \draw (0,-3) node[circle,fill=white,inner sep=0pt] {\tiny $Q^\gamma_{0\bar11}$};

    \draw (1.45,1.45) node[circle,fill=white,inner sep=0pt] {\tiny $Q^\gamma_{110}$};
    \draw (1.45,-1.45) node[circle,fill=white,inner sep=0pt] {\tiny $Q^\gamma_{1\bar10}$};
    \draw (-1.45,1.45) node[circle,fill=white,inner sep=0pt] {\tiny $Q^\gamma_{\bar110}$};
    \draw (-1.45,-1.45) node[circle,fill=white,inner sep=0pt] {\tiny $Q^\gamma_{\bar1\bar10}$};

    \draw (2.45,0) node[circle,fill=white,inner sep=0pt] {\tiny $Q^\gamma_{100}$};
    \draw (0,2.45) node[circle,fill=white,inner sep=0pt] {\tiny $Q^\gamma_{010}$};
    \draw (-2.45,0) node[circle,fill=white,inner sep=0pt] {\tiny $Q^\gamma_{\bar100}$};
    \draw (0,-2.45) node[circle,fill=white,inner sep=0pt] {\tiny $Q^\gamma_{0\bar10}$};
\end{tikzpicture}
        \caption{Moreau--Yosida regularization $(\partial g^*)_\gamma$}\label{fig:subdomainselast:my}
    \end{subfigure}
    \caption{Subdomains for concentric corners, where $\bar1$ is written for $-1$ to simplify notation (the line dimensions are provided in \cref{fig:subdiff:elast:dimensions})}
    \label{fig:subdiff:elast}
\end{figure}
\begin{figure}[t]
    \begin{subfigure}[t]{0.45\linewidth}
        \centering
        \begin{tikzpicture}[scale=0.8]
    \draw[gray,->](0,-3.5) -- (0,3.5) node[right] {\small $q_2$};
    \draw[gray,->](-3.5,0) -- (3.5,0) node[below] {\small $q_1$};

    \draw (2,0) -- (0,2) -- (-2,0) -- (0,-2) -- cycle;
    \draw(0,-2) -- (0,2);
    \draw(-2,0) -- (2,0);

    \draw (2,0) node[below] {\small\hspace*{1ex}$3\alpha$};
    \draw (0,2) node[left] {\small$3\alpha$};
    \draw (-2,0) node[below] {\small $-3\alpha$\hspace*{1ex}};
    \draw (0,-2) node[left] {\small $-3\alpha$};
\end{tikzpicture}
        \caption{subgradient $\partial g^*$}\label{fig:dimensionselast:subdiff}
    \end{subfigure}
    \hfill
    \begin{subfigure}[t]{0.45\linewidth}
        \centering
        \begin{tikzpicture}[scale=0.8]
    \draw (0.3,0.3) -- (0.3,-0.3) -- (-0.3,-0.3) -- (-0.3,0.3) -- cycle;
    \draw (0.3,0.3) -- (0.3,2.3) -- (-0.3,2.3) -- (-0.3,0.3) -- cycle;
    \draw (0.3,-0.3) -- (0.3,-2.3) -- (-0.3,-2.3) -- (-0.3,-0.3) -- cycle;
    \draw (0.3,0.3) -- (2.3,0.3) -- (2.3,-0.3) -- (0.3,-0.3) -- cycle;
    \draw (-0.3,-0.3) -- (-2.3,-0.3) -- (-2.3,0.3) -- (-0.3,0.3) -- cycle;

    \draw (0.6,3) -- (0.6,2.6) -- (-0.6,2.6) -- (-0.6,3);
    \draw (0.6,-3) -- (0.6,-2.6) -- (-0.6,-2.6) -- (-0.6,-3);
    \draw (3,0.6) -- (2.6,0.6) -- (2.6,-0.6) -- (3,-0.6);
    \draw (-3,0.6) -- (-2.6,0.6) -- (-2.6,-0.6) -- (-3,-0.6);

    \draw (2.3,0.3) -- (0.3,2.3) -- (0.6,2.6) -- (2.6,0.6) -- cycle;
    \draw (2.3,-0.3) -- (0.3,-2.3) -- (0.6,-2.6) -- (2.6,-0.6) -- cycle;
    \draw (-2.3,0.3) -- (-0.3,2.3) -- (-0.6,2.6) -- (-2.6,0.6) -- cycle;
    \draw (-2.3,-0.3) -- (-0.3,-2.3) -- (-0.6,-2.6) -- (-2.6,-0.6) -- cycle;
    
    \draw [gray,line width = 3pt] (0,0.3) -- (2.3,0.3) -- (2.6,0.6) -- (3,0.6) node [above] {\small $\eta(q_1)$};
    
    \draw [white,line width = 15pt] (0,-0.6) -- (0.3,-0.6);
    \draw [white,line width = 20pt] (0,-1.2) -- (0.6,-1.2);
    \draw [white,line width = 20pt] (0,-1.8) -- (2.3,-1.8);
    \draw [white,line width = 35pt] (0,-2.4) -- (2.6,-2.4);
    
    \draw [<->] (0,-0.6) -- (0.3,-0.6);
    \draw [<->] (0,-1.2) -- (0.6,-1.2);
    \draw [<->] (0,-1.8) -- (2.3,-1.8);
    \draw [<->] (0,-2.4) -- (2.6,-2.4);

    \draw (0.15,-0.6) node[below]{\small $\gamma$};
    \draw (0.3,-1.2) node[below]{\small $2\gamma$};
    \draw (1.15,-1.8) node[below]{\small $3\alpha+\gamma$};
    \draw (1.3,-2.4) node[below]{\small $3\alpha+2\gamma$};

    \draw [gray,->] (0,-3.5) -- (0,3.5) node[right] {\small $q_2$};
    \draw [gray,->] (-3.5,0) -- (3.5,0) node[below] {\small $q_1$};
\end{tikzpicture}
        \caption{Moreau--Yosida regularization $(\partial g^*)_\gamma$}\label{fig:dimensionselast:my}
    \end{subfigure}
    \centering
    \caption{Dimensions for \cref{fig:subdiff:elast}}
    \label{fig:subdiff:elast:dimensions}
\end{figure}

\paragraph{Proximal mapping}

To obtain the Moreau--Yosida regularization of $\partial g^*$ for $\gamma>0$, we proceed as above by first noting that $w=(\Id + \gamma\partial g^*)^{-1}(q) \in Q_{ijk}$ holds if and only if
\begin{equation*}
    q \in (\Id+\gamma\partial g^*)(Q_{ijk})=: Q_{ijk}^{\gamma}.
\end{equation*}
A visualization of these sets is provided in \cref{fig:subdomainselast:my}; we postpone their discussion to the end of the section and first calculate the specific value of the proximal mapping based on \eqref{eq:proxMapRelation} together with the case distinction in the subdifferential.

Let $w\in Q_{ijk}$ and correspondingly $q\in Q_{ijk}^\gamma$ for some $i,j,k\in\{-1,0,1\}$.
\begin{enumerate}[{label={(\roman*)}, ref={(\roman*)}}]
    \item
        If $i,j,k\in\{-1,1\}$ we have $(\Id+\gamma\partial g^*)(w)=w+\gamma \u_{ij}^{(k+3)/2}$ so that
        \begin{equation*}
            (\Id+\gamma\partial g^*)^{-1}(q)=q-\gamma \u_{ij}^{(k+3)/2}\qquad\text{for }q\in Q_{ijk}^\gamma\text{ with }i,j,k\in\{-1,1\}.
        \end{equation*}
    \item
        If two of $i,j,k$ are zero, $(\Id+\gamma\partial g^*)^{-1}(q)$ must be the single unique element of $Q_{ijk}$ and thus
        \begin{equation*}
            (\Id+\gamma\partial g^*)^{-1}(q)=\begin{cases}
                0&\text{if }q\in Q_{0,0,-1}^\gamma,\\
                3\alpha(i,j)&\text{if }q\in Q_{i,j,0}^\gamma\text{ with }i=0\text{ or }j=0.
            \end{cases}
        \end{equation*}
    \item
        If $i=0$ and $j,k\neq0$, then for $w\in Q_{0jk}$ we have
        \begin{equation*}
            (\Id+\gamma\partial g^*)(w)=w+\gamma\co\left\{\u_{-1,j}^{(k+3)/2},\u_{1,j}^{(k+3)/2}\right\}=w+\gamma\frac{k+3}2([-1,1],j).
        \end{equation*}
        Thus for $q\in Q_{0jk}^\gamma$ we have $(\Id+\gamma\partial g^*)^{-1}(q)=q-\gamma\frac{k+3}2(\lambda,j)$, where $\lambda\in[-1,1]$ is such that $q-\gamma\frac{k+3}2(\lambda,j)\in Q_{0jk}\subset\{0\}\times\R$.
        Therefore $\lambda=\frac2{\gamma(k+3)}q_1$ and
        \begin{equation*}
            (\Id+\gamma\partial g^*)^{-1}(q)=\left(0,q_2-\gamma\tfrac{k+3}2j\right)\qquad\text{for }q\in Q_{0jk}^\gamma\text{ with }j,k\in\{-1,1\}.
        \end{equation*}
        Analogously,
        \begin{equation*}
            (\Id+\gamma\partial g^*)^{-1}(q)=\left(q_1-\gamma\tfrac{k+3}2i,0\right)\qquad\text{for }q\in Q_{i0k}^\gamma\text{ with }i,k\in\{-1,1\}.
        \end{equation*}
    \item
        If $k=0$ and $i,j\neq0$, then for $w\in Q_{ij0}$ we have
        \begin{equation*}
            (\Id+\gamma\partial g^*)(w)=w+\gamma\co\left\{\u_{ij}^1,\u_{ij}^2\right\}=w+\gamma[1,2](i,j).
        \end{equation*}
        Thus for $q\in Q_{ij0}^\gamma$ we have $(\Id+\gamma\partial g^*)^{-1}(q)=q-\gamma\lambda(i,j)$, where $\lambda\in[1,2]$ is such that $q-\gamma\lambda(i,j)\in Q_{ij0}\subset\{w\in\R^2:\abs{w}_1=3\alpha\}$.
        Therefore $\lambda=\frac{\abs{q}_1-3\alpha}{2\gamma}$ and
        \begin{equation*}
            (\Id+\gamma\partial g^*)^{-1}(q)=q-\tfrac{\abs{q}_1-3\alpha}{2}(i,j)\qquad\text{for }q\in Q_{ij0}^\gamma\text{ with }i,j\in\{-1,1\}.
        \end{equation*}
\end{enumerate}

It remains to discuss the sets $Q_{ijk}^\gamma$.
Rather than list all sets explicitly, we instead provide a procedure for determining for a given $q\in\R^2$ the corresponding subdomain, which is what is actually required for the numerical implementation.
For that purpose, let us introduce the function (compare the illustration in \cref{fig:dimensionselast:my})
\begin{equation*}
    \eta(x)=\begin{cases}\gamma&\text{if }x<3\alpha+\gamma,\\x-3\alpha&\text{if }3\alpha+\gamma\leq x\leq3\alpha+2\gamma,\\2\gamma&\text{if }x>3\alpha+2\gamma.\end{cases}
\end{equation*}
With this function we have $q\in Q_{ijk}^\gamma$ for $i,j,k$ given by
\begin{align*}
    i&=\begin{cases}0&\text{if }|q_1|\leq\eta(|q_2|),\\\sign(q_1)&\text{otherwise,}\end{cases}\\[1em]
    j&=\begin{cases}0&\text{if }|q_2|\leq\eta(|q_1|),\\\sign(q_2)&\text{otherwise,}\end{cases}\\[1em]
    k&=\begin{cases}
        -1&\text{if }\abs{q}_\infty<3\alpha+\gamma\text{ and }\abs{q}_1<3\alpha+2\gamma,\\
        1&\text{if }\abs{q}_\infty>3\alpha+2\gamma\text{ or }\abs{q}_1>3\alpha+4\gamma,\\
    0&\text{otherwise.}\end{cases}
\end{align*}

\paragraph{Moreau--Yosida regularization}

Inserting this into the definition \eqref{eq:my_reg} of the Moreau--Yosida regularization yields
\begin{equation}\label{eq:myreg_elast}
    (\partial g^*)_\gamma(q)=\begin{cases}
        \u_{ij}^{(k+3)/2} & \text{if }q\in Q_{ijk}^\gamma\text{ with }i,j,k\neq 0,\\
        \frac1\gamma(q-3\alpha(i,j)) & \text{if }q\in Q_{ijk}^\gamma\text{ with } |i|+|j|+|k| = 1,\\
        (\frac1\gamma q_1,\frac{k+3}2j) & \text{if }q\in Q_{0jk}^\gamma\text{ with }j,k\neq 0,\\
        (\frac{k+3}2i,\frac1\gamma q_2) & \text{if }q\in Q_{i0k}^\gamma\text{ with }i,k\neq 0,\\
        \frac{\abs{q}_1-3\alpha}{2\gamma}(i,j) & \text{if }q\in Q_{ij0}^\gamma\text{ with }i,j\neq 0.
    \end{cases}
\end{equation}

\paragraph{Newton derivative}

Finally, we can again take as a Newton derivative any element of the Clarke gradient; here, we choose
\begin{equation}\label{eq:newton_elast}
    D_Nh_{\gamma}(q) = \begin{cases}
        0 & \text{ if } q\in Q_{ijk}^\gamma\text{ with }i,j,k\neq 0, \\
        \frac{1}{\gamma} \Id & \text{ if } q\in Q_{ijk}^\gamma\text{ with } |i|+|j|+|k| = 1,\\

        \frac{1}{\gamma}(j,i)^T(j,i)
        & \text{ if }q\in Q_{ijk}^\gamma\text{ with } |i| + |j| = 1,k\neq 0,\\

        \frac{1}{2\gamma}(i,j)^T(i,j) & \text{ if } q\in Q_{ij0}^\gamma\text{ with }i,j\neq 0. \\
    \end{cases}
\end{equation}

\subsection{General multibang control}\label{sec:multibang:general}

For more than two control dimensions or arbitrary sets $\calM$ an explicit calculation quickly becomes complicated.
However, note that $Q_{i_1\ldots i_k}$ is actually empty for most index collections $\{i_1,\ldots,i_k\}$,
so that the number of conditions to be checked in an algorithmic evaluation can be greatly reduced.
Indeed, by construction the sets $Q_{i_1\ldots i_k}$ are nothing else but the preimages under $g^*$ of the faces of the (polyhedral) graph of $g^*$.
Thus, finding all nonempty $Q_{i_1\ldots i_k}$ is equivalent to enumerating all faces of the graph of $g^*$,
which for given choices of $c$ and the $\bar u_i$ can be done by existing face enumeration algorithms such as \cite{FuLiMa97}.
Hence the general procedure outlined in the beginning of this section can be implemented
for given $\calM=\{\bar u_i:i\in I\}$ and $c$
as follows:
\begin{enumerate}
    \item
        Use the algorithm from \cite{FuLiMa97} to list all faces of
        \begin{equation*}
            \mathrm{epi}\,g^*=\{(q,t)\in\R^{m+1}:t\geq\langle \bar u_i,q\rangle-\alpha c(\bar u_i)\text{ for }i\in I\}.
        \end{equation*}
        where a face is identified by the indices $\{i_1,\ldots,i_k\}\subset I$ whose constraints are active on that face; these index collections are exactly those for which $Q_{i_1\ldots i_k}$ is nonempty.
    \item
        For each face $\mathfrak f=\{i_1,\ldots,i_k\}$, compute
        $A_{\mathfrak f}$, $b_{\mathfrak f}$ and $A_{\mathfrak f;l}$, $b_{\mathfrak f;l}$
        from \eqref{eqn:multibangGeneralSystem}.
    \item
        To evaluate $h_\gamma$ at a vector $q\in\R^m$, calculate
        $w_{\mathfrak f}=A_{\mathfrak f}q+b_{\mathfrak f}$ and
        $\lambda_{\mathfrak f;l}=A_{\mathfrak f;l}q+b_{\mathfrak f;l}$
        for each face $\mathfrak f$.
        Identify the (unique) face $\mathfrak f$ such that
        $0\leq\lambda_{\mathfrak f;l}\leq1$ for all $l$ and $g_{\bar u_{i_1}}(q)>g_{\bar u_i}(q)$ for $i_1\in\mathfrak f$ and all $i\notin\mathfrak f$.
        Then set
        \begin{equation*}
            h_\gamma(q)=\frac1\gamma(q-A_{\mathfrak f}q-b_{\mathfrak f})\qquad\text{and}\qquad D_Nh_\gamma=\frac1\gamma(\Id-A_{\mathfrak f}).
        \end{equation*}
\end{enumerate}
We point out that the computationally most expensive step -- enumerating the faces of $\mathrm{epi}\,g^*$, which requires solving a linear program -- is independent of $q$ and $\gamma$ and can thus be precomputed.

\section{State equation}\label{sec:stateEq}

In this section, we specify in more detail our model state operators and verify that assumptions \ref{enm:weakWeakContinuity}--\ref{enm:adjointConvergence} of \cref{sec:existence,sec:stability} are satisfied for our model problems.

\subsection{Bloch equation}\label{sec:BlochStateEq}

As our motivating model problem, we consider the Bloch equation in a rotating reference frame without relaxation
\begin{equation*}
    \frac{\dd}{\dd t}{\mathbf{M}}^{(\omega)}(t) = \mathbf{M}^{(\omega)}(t) \times \mathbf B^{(\omega)}(t),
    \qquad\mathbf{M}^{(\omega)}(0)=(0,0,1)^T,
\end{equation*}
which describes the temporally evolving magnetization $\mathbf M^{(\omega)}\in\R^3$ of an ensemble of spins rotating at the same resonance offset frequency $\omega$ (called \emph{isochromat}),
starting from a given equilibrium magnetization.
The time-varying effective magnetic field $\mathbf B^{(\omega)}(t)$ is of the form
\begin{equation*}
    \mathbf B^{(\omega)}(t)=(\omega_x(t),\omega_y(t),\omega)^T,
\end{equation*}
where $u(t):=(\omega_x(t),\omega_y(t))\in \R^2$ can be controlled.
The aim is to achieve a magnetization $\mathbf M^{(\omega)}(T)=\mathbf M_d$ within the time interval $\Omega=[0,T]$ for a list of offset frequencies $\omega_1,\ldots,\omega_J$.
In terms of our previous notation we thus set
\begin{equation}\label{eq:settingBloch}
    S:L^2(\Omega;\R^2) \to (\R^3)^J,\qquad u\mapsto\left[\mathbf M^{(\omega_1)}(T),\ldots,\mathbf M^{(\omega_J)}(T)\right].
\end{equation}
This choice of $S$ satisfies the assumptions \ref{enm:weakWeakContinuity}--\ref{enm:adjointConvergence}; see \cref{sec:BlochProperties}.

\subsection{Linear elasticity}\label{sec:elasticityStateEq}

In this case, $\Omega\subset\R^2$ represents an elastic body fixed at $\Gamma\subset\partial\Omega$ (with positive Hausdorff measure $\mathcal{H}^1(\Gamma)>0$),
where we assume $\Gamma$ and $\partial\Omega\setminus\Gamma$ to be smooth or $\Omega$ to be a convex polygon with $\Gamma$ being the union of some faces.
The elastic body is subject to a controlled body force $u:\Omega\to\R^2$.
The resulting displacement $y:\Omega\to\R^2$ is governed by the equations of linearized elasticity with Lam\'e parameters $\mu$ and $\lambda$,
\begin{equation}\label{eq:lame}
    \left\{\begin{aligned}
            -2\mu \divergence \epsilon(y) - \lambda \grad \divergence y &= u \text{ in } \Omega, \\
            y &= 0 \text{ on } \Gamma, \\
            (2\mu \epsilon(y) + \lambda \divergence y) n &= 0 \text{ on } \partial\Omega\setminus\Gamma,
    \end{aligned}\right.
\end{equation}
where $n$ denotes the unit outward normal, $Dy=[\nabla y_1|\nabla y_2]^T$ is the displacement gradient, and $\epsilon(y)=\frac{Dy+Dy^T}2$ is the symmetrized gradient.
Defining
\begin{equation*}
    H^1_\Gamma(\Omega) \defgl \set{v \in H^1(\Omega;\R^2)}{v = 0 \text{ on } \Gamma },
\end{equation*}
we may take
\begin{equation}\label{eq:settingElasticity}
    S: H_{\Gamma}^1(\Omega)^* \to H_{\Gamma}^1(\Omega),\qquad u\mapsto y\text{ solving \eqref{eq:lame}}.
\end{equation}

The solution operator $S$ of the linear elasticity problem is well known to be a bounded linear operator from $U=L^2(\Omega;\R^2)$ into $H^1_\Gamma(\Omega)\hookrightarrow L^2(\Omega;\R^2)=:Y$; see, e.g., \cite{Braess:2007}.
This immediately implies weak-to-weak continuity and Fr\'echet differentiability with $S'(u)=S$ for all $u\in U$.
Similarly, $S'(u)^*=S^*$ for all $u\in U$, and it is readily checked that $S$ is actually self-adjoint so that $S^*=S$.
As a consequence we have $\ran S'(u)^*=\ran S\hookrightarrow L^\infty(\Omega;\R^2)$.
Indeed, in the case of polygonal domains $\Omega$ this follows from $\ran S\subset H^{3/2}(\Omega;\R^2)$ by \cite[Thm.~2.3]{Ni92},
and in the case of piecewise smooth domains with smooth traction boundary it follows from $\ran S\subset H^{2}(\Omega;\R^2)$ by \cite[Thm.~8]{MaNi10}.
Summarizing, this choice of $S$ satisfies assumptions \ref{enm:weakWeakContinuity}--\ref{enm:linearity}.

\subsection{Multimaterial branched transport}

Here $\Omega$ represents a street or pipe network, i.e., a graph that is typically (but not necessarily) embedded in $\R^2$ or $\R^3$.
If we assume a constant material flux along the graph edges, the description simplifies to the following:
Let $G=(V,E)$ be a directed graph with vertex set $V$ and edge set $E$ representing a transport network.
We endow $E$ with the $\sigma$-algebra generated by the (finitely many) elements of $E$
and define $L^2(E;\R^m)$ as the space of measurable functions from $E$ to $\R^m$
with the inner product $(u,v)_{L^2}=\sum_{e\in E}\ell(e)u(e)\cdot v(e)$ (where $\ell(e)$ denotes the length of $e$).
The space $L^2(V;\R^m)$ is defined analogously.
The function $u\in L^2(E; \R^m)$ describes the flux of $m$ different materials along each edge,
and the operator $S$ is the graph divergence, i.e., the difference of outflux and influx at each vertex,
\begin{equation*}
    S:L^2(E;\R^m)\to L^2(V;\R^m),\quad Su(x)=\sum_{e\in E\text{ incident to }x}u(e)-\sum_{e\in E\text{ emanating from }x}u(e).
\end{equation*}
The target function $z\in L^2(V;\R^m)$ is zero except at the sources and sinks of each material $i$, where $z_i$ takes the value $m_i$ and $-m_i$, respectively.
For this finite-dimensional linear operator $S$, \ref{enm:weakWeakContinuity}--\ref{enm:linearity} are automatically fulfilled.
Note that an infinite-dimensional setting would likewise be possible;
in this case one could choose $\Omega$ as the graph embedding and $S:L^2(\Omega;\R^m)\to H^{-s}(\Omega;\R^m)$ with $s>1$ as the distributional divergence on $\Omega$.
Let us also mention that the regularization $\int_\Omega g(u(x))\,\dd x$ in the above setting reduces to $\sum_{e\in E}\ell(e)g(u(e))$,
which represents the total transport costs.

\section{Numerical solution}\label{sec:semiSmoothNewton}

We now discuss the numerical solution of the regularized system \eqref{eq:regsystem} via a semismooth Newton method.

\subsection{Bloch equation}

As is usual for time-dependent state equations, we avoid a full space-time discretization by following a reduced approach, i.e., we consider in place of \eqref{eq:regsystem} the equation
\begin{equation}\label{eq:my_sys_bloch}
    u_\gamma - H_\gamma(-\calF'(u_\gamma)) = 0.
\end{equation}
Recall that $H_\gamma$ is a superposition operator defined via
\begin{equation*}
    [H_\gamma(p)](x) = h_\gamma (p(x)) \qquad \text{for a.e. }x\in \Omega
\end{equation*}
with $h_\gamma = (\partial g^*)_\gamma$ given by \eqref{eq:my_bloch}.
By \cref{prop:blochAdjointContinuity}, we have $-\calF'(u_\gamma)=S'(u_\gamma)^*(z-S(u_\gamma))\in L^\infty(\Omega;\R^2)$ and, hence, we can consider $H_\gamma:L^r(\Omega;\R^2)\to L^2(\Omega;\R^2)$ for any $r>2$. Since $h_\gamma$ is Lipschitz continuous and piecewise differentiable, semismoothness of $H_\gamma$ follows from \cite[Thm.~3.49]{Ulbrich:2011} with a Newton derivative given by
\begin{equation*}
    [D_N H_\gamma(p) h](x) = D_Nh_\gamma(p(x))h(x) \qquad \text{for a.e. }x\in \Omega
\end{equation*}
and $D_Nh_\gamma$ defined in \eqref{eq:newton_bloch}.

Further, note that $S$ is twice continuously differentiable.
Indeed, this follows by an analogous argument as for Fr\'echet differentiability in the proof of \cref{thm:operatorProperties}:
Using the same notation, the second derivative applied to test directions $\varphi,\psi\in L^2(\Omega;\R^2)$ will be given by $S''(u)(\varphi,\psi)=\mathbf{W}(T)=(\mathbf{W}^1(T),\ldots,\mathbf{W}^J(T))$ with
\begin{equation*}
    \left\{\begin{aligned}
            \tfrac{\dd}{\dd t} \mathbf{W}^j(t)&=B_{u}^{\omega_j}(t)\mathbf{W}^j(t)+B_\varphi^0(t)\delta\mathbf M_\psi^{(\omega_j)}(t)+B_\psi^0(t)\delta\mathbf M_\varphi^{(\omega_j)}(t)\,,\qquad t\in[0,T],\\
            \mathbf{W}^j(0)&=0,
    \end{aligned}\right.
\end{equation*}
where $S'(u)(\varphi)=(\delta\mathbf M_\varphi^{(\omega_1)}(T),\ldots,\delta\mathbf M_\varphi^{(\omega_J)}(T))$ with $\delta\mathbf M_\varphi^{(\omega)}$ satisfying \eqref{eq:BlochFrechetEquation}.
This equation has exactly the same structure as \eqref{eq:BlochFrechetEquation}, and thus the argument for showing
\begin{equation*}
    |S'(\tilde u)(\varphi)-S'(u)(\varphi)-S''(u)(\tilde u-u,\varphi)|_2=\|\varphi\|_{L^2(\Omega;\R^2)}O(\|\tilde u-u\|_{L^2(\Omega;\R^2)}^2)
\end{equation*}
works analogously.
Since $S$ is twice continuously differentiable, we can apply the chain rule, e.g., from \cite[Thm.~3.69]{Ulbrich:2011}, to obtain
\begin{equation*}
    D_N (H_\gamma \circ (-\calF'))(u)\phi = - D_N H_\gamma(-\calF'(u))\calF''(u)\phi
\end{equation*}
for any $\phi\in L^2(\Omega;\R^2)$.
A semismooth Newton step is thus given by $u^{k+1} = u^k +\delta u$, where $\delta u$ is the solution to
\begin{equation}\label{eq:ssn_bloch}
    \left(\Id + D_N H_\gamma(-\calF'(u^k))\calF''(u^k)\right)\delta u = -u^k+ H_\gamma(-\calF'(u^k)),
\end{equation}
which can be obtained, e.g., using a matrix-free Krylov subspace method such as GMRES.

Recall that following \cref{prop:bloch_adjoint} and \cite{Aigner:2015}, $p=-\calF'(u)$ can be evaluated by solving the adjoint equations
\begin{equation}
    \label{eq:adjoint}
    \left\{\begin{aligned}
            -\tfrac{\dd}{dt}{\mathbf{P}}^{(\omega_j)}(t)&= B^\omega_u(t) \mathbf{P}^{(\omega_j)}(t),\qquad t\in[0,T],\\
            \mathbf{P}^{(\omega_j)}(T)&=\mathbf{M}^{(\omega_j)}_u(T)-(\mathbf{M}_d)_j
    \end{aligned}\right.
\end{equation}
for $j=1,\dots,J$ and setting
\begin{equation*}
    \label{eq:gradient}
    \begin{aligned}[t]
        p(t) &= \sum_{j=1}^J\begin{pmatrix}
            \big(\mathbf M_u^{(\omega_j)}(t)\big)_3{\mathbf{P}^{(\omega_j)}_2}(t) - \big(\mathbf M_u^{(\omega_j)}(t)\big)_2{\mathbf{P}^{(\omega_j)}_3}(t)\\
            \big(\mathbf M_u^{(\omega_j)}(t)\big)_3{\mathbf{P}^{(\omega_j)}_1}(t) - \big(\mathbf M_u^{(\omega_j)}(t)\big)_1{\mathbf{P}^{(\omega_j)}_3}(t)
        \end{pmatrix}\\
        &=\sum_{j=1}^J
        \begin{pmatrix}
            \mathbf{M}^{(\omega_j)}_u(t)^T{B_1} {\mathbf{P}^{(\omega_j)}}(t)\\
            \mathbf{M}^{(\omega_j)}_u(t)^T{B_2} {\mathbf{P}^{(\omega_j)}}(t)
        \end{pmatrix}
    \end{aligned}
\end{equation*}
for $t\in[0,T]$, where for the sake of brevity, we have set
\begin{equation*}
    {B_1} :=\begin{pmatrix} 0 & 0 & 0 \\ 0 & 0 & -1 \\ 0 & 1 &0\end{pmatrix},\qquad
    {B_2} :=\begin{pmatrix} 0 & 0 & -1 \\ 0 & 0 & 0 \\ 1 & 0 &0\end{pmatrix}.
\end{equation*}
Similarly, the application of $\calF''(u)\phi$ for given $u,\phi\in L^2(\Omega;\R^2)$ is given by
\begin{equation*}
    \calF''(u)\phi=\sum_{j=1}^J\begin{pmatrix}
        \delta \mathbf{M}_\phi^{(\omega_j)}(t)^T{B_1} {\mathbf{P}^{(\omega_j)}}(t) + \mathbf{M}_u^{(\omega_j)}(t)^T{B_1}\delta {\mathbf{P}^{(\omega_j)}}(t)\\
        \delta \mathbf{M}_\phi^{(\omega_j)}(t)^T{B_2} {\mathbf{P}^{(\omega_j)}}(t) + \mathbf{M}_u^{(\omega_j)}(t)^T{B_2}\delta {\mathbf{P}^{(\omega_j)}}(t)
    \end{pmatrix},
\end{equation*}
where $\delta \mathbf{M}_\phi^{(\omega)}$ (the directional derivative of $\mathbf{M}^{(\omega)}$ with respect to ${u}$) is given by the solution of the linearized state equation \eqref{eq:BlochFrechetEquation}
and $\delta {\mathbf{P}^{(\omega)}}$ (the directional derivative of ${\mathbf{P}^{(\omega)}}$ with respect to ${u}$) is given by the solution of the linearized adjoint equation
\begin{equation*}
    \label{eq:adjoint_lin}
    \left\{\begin{aligned}
            -\tfrac{\dd}{\dd t}{\delta {\mathbf{P}^{(\omega)}}}(t) &= {B_u^\omega}(t) {\delta {\mathbf{P}^{(\omega)}}}(t)+ B_\phi^0(t) {\mathbf{P}^{(\omega)}}(t),\qquad t\in[0,T],\\
            \delta {\mathbf{P}^{(\omega)}}(T) &= \delta \mathbf{M}_\phi^{(\omega)}(T).
    \end{aligned}\right.
\end{equation*}
This characterization can be derived using formal Lagrangian calculus and rigorously justified using the implicit function theorem; see, e.g., \cite[Chapter 1.6]{Hinze2009}.

\bigskip

Since the forward operator $S$ is nonlinear, the problem \eqref{eq:problem_reg} is nonconvex. Hence, convergence of the semismooth Newton method \eqref{eq:ssn_bloch} to a minimizer $u_\gamma$ requires a second-order sufficient (local quadratic growth) condition at $u_\gamma$ for $\gamma>0$ small, which is difficult to verify. Furthermore, we need to deal with the fact that Newton methods converge only locally, with the convergence region shrinking with $\gamma$. For this reason, we perform a continuation in $\gamma$, i.e., we solve \eqref{eq:regsystem} for a sequence $\gamma_1>\gamma_2>\cdots$ of regularization parameters, each time using the result for $\gamma_n$ as initialization for the iteration with $\gamma_{n+1}$. In addition, we include in each step of the semismooth Newton method a line search for $\delta u$ based on the residual norm of the reduced optimality condition \eqref{eq:my_sys_bloch}.
While globalization of nonsmooth Newton methods is a delicate issue that we do not want to address in this work, we remark that this heuristic approach seems to work well for this problem in practice.

\bigskip

We finally address the discretization of \eqref{eq:ssn_bloch}. The Bloch equation is discretized using a Crank--Nicolson method, where the states $\mathbf{M}^{(\omega)}$ are discretized as continuous piecewise linear functions with values $\mathbf{M}_m^{(\omega)}:=\mathbf{M}^{(\omega)}(t_m)$ for discrete time points $t_1,\ldots,t_{N_u}$, and the control ${u}$ is treated as a piecewise constant function, i.e., ${u}=\sum_{m=1}^{N_u} {u_m} \chi_{(t_{m-1},t_m]}(t)$, where $\chi_{(a,b]}$ is the characteristic function of the half-open interval~$(a,b]$. To obtain a consistent scheme, where discretization and optimization commute, the adjoint state $\mathbf{P}^{(\omega)}$ in \eqref{eq:adjoint} is discretized as piecewise constant using an appropriate time-stepping scheme \cite{BecMeiVex_07}, and the linearized state $\delta\mathbf{M}^{(\omega)}$ and the linearized adjoint state $\delta\mathbf{P}^{(\omega)}$ are discretized in the same way as the state and adjoint state, respectively; see~\cite{Aigner:2015}.

\subsection{Linearized elasticity}

For the case of linearized elasticity, we can proceed exactly as in \cite{CIK:2014,CK:2013}. First, note that due to the embedding $H_{\Gamma}^1(\Omega) \hookrightarrow L^p(\Omega;\R^2)$ for $p>2$, the superposition operator $H_{\gamma}$ (for $h_\gamma:=(\partial g^*)_\gamma$ now given by \eqref{eq:myreg_elast}) is again semismooth with Newton derivative $D_NH_\gamma$ (for $D_N h_\gamma$ now given by \eqref{eq:newton_elast}).

To obtain a symmetric Newton system, we reduce \eqref{eq:regsystem} to the state $y_\gamma=S(u_\gamma)$ and the dual variable $p_\gamma$. Since $S$ is a bounded linear operator, we have $S'(u) = S$ and therefore by the definition of $S$ obtain
\begin{equation*}
    \label{eq:redregsystem}
    \left\{\begin{aligned}
            A^*p_{\gamma} &= z - y_{\gamma}, \\
            Ay_{\gamma} &= H_{\gamma}(p_{\gamma}),
    \end{aligned} \right.
\end{equation*}
where $A$ denotes the elliptic linear differential operator arising from the system \eqref{eq:lame} of linearized elasticity.
Consequently, we consider
\begin{equation} \label{eq:Newtonfun}
    F(y,p) \defgl \begin{pmatrix} y - z + A^*p \\ Ay - H_{\gamma}(p) \end{pmatrix} = \begin{pmatrix} 0 \\ 0 \end{pmatrix},
\end{equation}
where $F:Y\times U^*\to Y\times U$. Since the regularized optimal state $y_\gamma$ and the adjoint state $p_\gamma$ are in $H^1_\Gamma(\Omega)$, we may consider $F:H^1_\Gamma(\Omega)\times H^1_\Gamma(\Omega)\to H^1_\Gamma(\Omega)^*\times H^1_\Gamma(\Omega)^*$.
For a semismooth Newton step, we obtain $(\delta y, \delta p)$ by solving
\begin{equation} \label{eq:Newtonstep}
    \begin{pmatrix}
        \Id & A^* \\
        A & -D_NH_{\gamma}(p^k)
    \end{pmatrix}
    \begin{pmatrix}
        \delta y \\
        \delta p
    \end{pmatrix}
    =
    \begin{pmatrix}
        z - y^k -A^*p^k \\
        -Ay^k + H_{\gamma}(p^k)
    \end{pmatrix}
\end{equation}
for given $(y^k,p^k)$, and we set $y^{k+1} = y^k + \delta y$ and $p^{k+1} = p^k + \delta p$.

\bigskip

Due to the linearity of the state equation (and hence convexity of the problem), the convergence of the semismooth Newton method for every $\gamma>0$ to a minimizer of \eqref{eq:problem_reg} can be shown exactly as in \cite{CIK:2014,CK:2013}. As in the case of the Bloch equation, we include a continuation in $\gamma$ as well as a line search based on the residual norm in \eqref{eq:Newtonfun}.

\bigskip

For the discretization, we consider \eqref{eq:Newtonfun} in its weak form
\begin{equation}\label{eq:elasticReducedWeakSystem}
    \begin{pmatrix}
        \int_\Omega2\mu\epsilon(p):\epsilon(\phi)+\lambda\divergence(p)\,\divergence\phi+(y-z)\phi\,\dd x\\
        \int_\Omega2\mu\epsilon(y):\epsilon(\psi)+\lambda\divergence y\,\divergence\psi-h_\gamma(p)\psi\,\dd x
    \end{pmatrix}
    =
    \begin{pmatrix} 0 \\ 0 \end{pmatrix}
\end{equation}
for all $\phi,\psi\in H^1_\Gamma(\Omega)$.
We now discretize the state $y$, the adjoint state $p$, and the test functions $\phi_h,\psi_h$ using piecewise linear finite element functions $y_h,p_h,\phi_h,\psi_h\in V_h$,
where $V_h\subset H^1_\Gamma(\Omega)$ denotes the space of piecewise linear, $\R^2$-valued functions on a uniform triangulation of $\Omega$.
Analogously to \cite{CIK:2014,CK:2013}, we employ exact quadrature for all terms
except for $\int_\Omega h_\gamma(p_h)\psi_h\,\dd x$, which we approximate by $\int_\Omega I_h(h_\gamma(p_h))\psi_h\,\dd x$ for the piecewise linear nodal interpolation operator $I_h$.
Thus, letting $\phi_1,\ldots,\phi_{N_h}$ denote a nodal basis of $V_h$ and introducing the mass and stiffness matrices
\begin{equation*}
    M_h=\left(\int_\Omega\phi_i\cdot \phi_j\,\dd x\right)_{ij},\qquad
    L_h=\left(\int_\Omega\epsilon(\phi_i):\epsilon(\phi_j)\,\dd x\right)_{ij},\qquad
    K_h=\left(\int_\Omega\divergence\phi_i\cdot \divergence\phi_j\,\dd x\right)_{ij},
\end{equation*}
as well as $A_h=2\mu L_h+\lambda K_h$ and the vector $Z_h=(\int_\Omega z\cdot\phi_1\,\dd x,\ldots,\int_\Omega z\cdot\phi_{N_h}\,\dd x)^T$, the discrete version of \eqref{eq:elasticReducedWeakSystem} reads
\begin{equation*}
    \begin{pmatrix}
        A_h^T\mathbf p+M_h\mathbf y-Z_h\\
        A_h\mathbf y-M_hh_\gamma(\mathbf p)
    \end{pmatrix}
    =
    \begin{pmatrix} 0 \\ 0 \end{pmatrix},
\end{equation*}
and \eqref{eq:Newtonstep} becomes
\begin{equation*}
    \begin{pmatrix}
        M_h & A_h^T \\
        A_h & -M_hD_Nh_{\gamma}(\mathbf p^k)
    \end{pmatrix}
    \begin{pmatrix}
        \delta\mathbf y \\
        \delta\mathbf p
    \end{pmatrix}
    =
    \begin{pmatrix}
        Z_h -\mathbf y^k -A_h^T\mathbf p^k \\
        -A_h\mathbf y^k + M_hh_{\gamma}(\mathbf p^k)
    \end{pmatrix},
\end{equation*}
where $\mathbf y=(y_i)_i$ and $\mathbf p=(p_i)_i$ are the nodal values of $y_h$ and $p_h$, and where $h_\gamma(\mathbf p)=(h_\gamma(p_i))_i$ and $D_Nh_\gamma(\mathbf p)=(D_Nh_\gamma(p_i)\delta_{ij})_{ij}$.

\subsection{Multimaterial branched transport}

Here we again follow the same approach as for the Bloch equation, that is, we solve the equation 
\begin{equation*}
    0=u_\gamma - H_\gamma(-\calF'(u_\gamma)) = u_\gamma-H_\gamma(S^*(z-Su_\gamma)).
\end{equation*}
Since $u_\gamma$ and $S^*(z-Su_\gamma)$ are finite-dimensional, the equation is Newton-differentiable with Newton step $u^{k+1}=u^k+\delta u$ for $\delta u$ the solution of
\begin{equation*}
    \left(\Id + D_N H_\gamma(S^*(z-Su^k))S^*S\right)\delta u = -u^k+ H_\gamma(S^*(z-Su^k)).
\end{equation*}
Due to the discrete nature of the domain, this is a more challenging problem than for the Bloch equation and therefore requires a more involved path-following. Specifically, during the outer iteration we adapt the reduction factor for $\gamma$ to keep convergence of the semismooth Newton method within a small number of steps, increasing it if the method requires too many steps (or does not converge at all) and reducing it if the method converges very quickly. In addition, we again employ a line search in $\delta u$.

\section{Numerical examples}\label{sec:examples}

We illustrate the proposed approach for the two model problems described in \cref{sec:stateEq} and the two specific multibang penalties described in \cref{sec:penalty}.
The MATLAB code used to generate these examples is available online \cite{Code}.

\subsection{Bloch equation}

The first example is based on the optimal excitation of isochromats in nuclear magnetic resonance imaging \cite{Spincontrol:15}, where the aim is to shift the magnetization vector $\mathbf M$ at time $T$ from initial alignment with a strong external magnetic field, i.e., $\mathbf M(0) = (0,0,1)^T$, to the saturated state $\mathbf M_d=(1,0,0)^T$ using a radiofrequency pulse $u(t)=(\omega_x(t),\omega_y(t))^T$. To follow the physical setup, we scale the controls as $u(t) = \bar \gamma B_1 \tilde u(t)$, where $\bar \gamma \approx 267.51$ is the gyromagnetic ratio (in MHz per Tesla) and $B_1 = 10^{-2}$ is the strength of the modulated magnetic field (in milliTesla); the figures always show the unscaled control $\tilde u$. The control cost parameter (which in this setting can be interpreted as a penalty on the specific absorption rate of the radio energy) is set to $\alpha=10^{-1}$.
In all examples, the Bloch equation is discretized with $N_u=1000$ time intervals; the implementation of the discrete (linearized) Bloch and adjoint equations is taken from \cite{rfcontrol}.
The semismooth Newton iteration is then applied and terminated if the relative or absolute norm of the residual in the optimality condition drops below $10^{-7}$ or if $500$ iterations are exceeded. The Newton step is solved via GMRES without restarts and without preconditioning, which is terminated if the relative residual drops below $10^{-10}$ or if $1000$ iterations are exceeded.
The continuation in the Moreau--Yosida regularization is started with $\gamma_0=10^2$ and reduced by a factor of $1/2$ until $\gamma_{\min}=10^{-10}$ is reached or the semismooth Newton iteration fails to converge.
We remark that in a practical implementation, these strict fixed tolerances should be replaced as in inexact Newton methods by adaptive criteria based on residuals in the outer loops.

We begin with a single isochromat with $\omega = 10^{-2}\bar\gamma$. \Cref{fig:Bloch1} shows the resulting optimal control $\tilde u$ and magnetization evolution $\mathbf{M}^{(\omega)}(t)$ for $M=3$ equally spaced radially distributed desired control values with magnitude $\omega_0=1$ and phases
$\theta_1=-\pi$, $\theta_2=-\pi/3$, $\theta_3=\pi/3$, which are marked by colored dashed lines.
At any time $t\in[0,T]$, the optimal control $\tilde u(t)=(\omega_x(t),\omega_y(t))$ can be seen to only take values from $\calM$ as desired.
(For easier visual comprehension, $\tilde u(t)$ is plotted as a continuous curve so that a jump from one value in $\calM$ to another is shown as a connecting line.)
Indeed, most of the time we have $\tilde u=\bar u_0=0$, periodically intermitted by short time intervals where $\tilde u$ takes the values $\bar u_1,\bar u_2,\bar u_3\in\calM$ in a periodically rotating order.
Each of these time intervals coincides in the state trajectory with a change in $M_z$, while the $M_z$ component of $\mathbf M^{(\omega)}$ stays constant during $\tilde u=0$.
The final magnetization $\mathbf M^{(\omega)}(T)$ shows a very close attainment of the target $\mathbf M_d$. The situation is very similar for $M=6$ with $\omega_0=1$ and $\theta \in \{-\pi,-2\pi/3,-\pi/3,0,\pi/3,2\pi/3\}$; see \Cref{fig:Bloch2}. In both cases, all nonzero desired control values are made use of equally.

\definecolor{mycolor1}{rgb}{0.00000,0.44700,0.74100}%
\definecolor{mycolor2}{rgb}{0.85000,0.32500,0.09800}%
\definecolor{mycolor3}{rgb}{0.92900,0.69400,0.12500}%
\definecolor{mycolor4}{rgb}{0.49400,0.18400,0.55600}%
\definecolor{mycolor5}{rgb}{0.46600,0.67400,0.18800}%
\definecolor{mycolor6}{rgb}{0.30100,0.74500,0.93300}%
\definecolor{mycolor7}{rgb}{0.63500,0.07800,0.18400}%

\begin{figure}
    \centering
    \begin{subfigure}[t]{0.45\linewidth}
        \centering
        \begin{tikzpicture}[trim axis left,scale=0.9]

\begin{axis}[%
width=\linewidth,
scale only axis,
xmin=0,
xmax=8,
tick align=outside,
xlabel={$t$},
ymin=-1,
ymax=1,
ylabel={$u_1$},
zmin=-1,
zmax=1,
zlabel={$u_2$},
view={-37.5}{30},
axis background/.style={fill=white},
axis x line*=bottom,
axis y line*=left,
axis z line*=left,
xmajorgrids,
ymajorgrids,
zmajorgrids
]
\addplot3 [color=mycolor1, line width=1pt]
 table[row sep=crcr] {%
0	-0	0\\
0.504	-0	0\\
0.511	-1	-0\\
0.567	-1	-0\\
0.574	0	0\\
1.288	-0	0\\
1.295	0.5	-0.866025403784438\\
1.351	0.5	-0.866025403784438\\
1.358	0	-0\\
2.065	-0	0\\
2.072	0.389203148246079	0.67411962722797\\
2.079	0.5	0.866025403784438\\
2.135	0.5	0.866025403784438\\
2.142	-0	0\\
2.849	-0	-0\\
2.856	-1	-0\\
2.912	-1	-0\\
2.926	-0	0\\
3.633	-0	-0\\
3.64	0.5	-0.866025403784438\\
3.696	0.5	-0.866025403784438\\
3.703	-0	0\\
4.417	0	-0\\
4.424	0.5	0.866025403784438\\
4.48	0.5	0.866025403784438\\
4.487	-0	0\\
5.201	0	-0\\
5.208	-1	-0\\
5.264	-1	-0\\
5.271	0	0\\
5.985	-0	0\\
5.992	0.5	-0.866025403784438\\
6.048	0.5	-0.866025403784438\\
6.055	-0	0\\
6.762	-0	-0\\
6.769	0.439094307431502	0.760533649785632\\
6.776	0.5	0.866025403784438\\
6.832	0.500000000000001	0.866025403784441\\
6.839	-0	-0\\
6.993	0	0\\
};
 \addplot3 [color=mycolor2, dashed, line width=1.5pt]
 table[row sep=crcr] {%
0	-1	-0\\
6.993	-1	-0\\
};
 \addplot3 [color=mycolor3, dashed, line width=1.5pt]
 table[row sep=crcr] {%
0	0.5	-0.866025403784438\\
6.993	0.5	-0.866025403784438\\
};
 \addplot3 [color=mycolor4, dashed, line width=1.5pt]
 table[row sep=crcr] {%
0	0.5	0.866025403784438\\
6.993	0.5	0.866025403784438\\
};
 \end{axis}
\end{tikzpicture}%
        \caption{control $\tilde u(t)$}\label{fig:Bloch1:u}
    \end{subfigure}
    \hfill
    \begin{subfigure}[t]{0.45\linewidth}
        \centering
        \input{bloch_d3_state.tikz}
        \caption{state $\mathbf{M}_u^{(\omega)}(t)$}\label{fig:Bloch1:M}
    \end{subfigure}
    \caption{Control and state for the Bloch model problem: $M=3$}\label{fig:Bloch1}
\end{figure} 
\begin{figure}
    \centering
    \begin{subfigure}[t]{0.45\linewidth}
        \centering
        \begin{tikzpicture}[trim axis left,scale=0.9]

\begin{axis}[%
width=\linewidth,
scale only axis,
xmin=0,
xmax=8,
tick align=outside,
xlabel={$t$},
ymin=-1,
ymax=1,
ylabel={$u_1$},
zmin=-1,
zmax=1,
zlabel={$u_2$},
view={-37.5}{30},
axis background/.style={fill=white},
axis x line*=bottom,
axis y line*=left,
axis z line*=left,
xmajorgrids,
ymajorgrids,
zmajorgrids
]
\addplot3 [color=mycolor1, line width=1pt]
 table[row sep=crcr] {%
0	-8.88178419700125e-16	0\\
0.126	0	0\\
0.133	-0.5	0.866025403784438\\
0.161	-0.5	0.866025403784436\\
0.168	-0	-0\\
0.518	-0	0\\
0.525	-1	-0\\
0.546	-1	-0\\
0.553	-0.968339124108316	-0\\
0.56	0	-0\\
0.91	-0	-0\\
0.917	-0.5	-0.866025403784438\\
0.938	-0.5	-0.866025403784438\\
0.952	-0	-0\\
1.302	-0	-0\\
1.309	0.5	-0.866025403784438\\
1.33	0.5	-0.866025403784438\\
1.337	-0	0\\
1.694	-0	0\\
1.701	1	0\\
1.722	1	0\\
1.729	0	0\\
2.086	0	0\\
2.093	0.499999999999999	0.866025403784439\\
2.114	0.499999999999999	0.866025403784439\\
2.121	0	0\\
2.478	-0	1.77635683940025e-15\\
2.485	-0.5	0.866025403784438\\
2.506	-0.5	0.866025403784438\\
2.513	0	-0\\
2.863	-0	-0\\
2.87	-0.731267125527537	-0\\
2.877	-1	-0\\
2.898	-1	-0\\
2.905	-0	0\\
3.255	-0	-0\\
3.262	-0.500000000000001	-0.86602540378443\\
3.29	-0.5	-0.866025403784438\\
3.297	0	-0\\
3.647	-0	0\\
3.654	0.499999999999999	-0.866025403784437\\
3.682	0.5	-0.866025403784438\\
3.689	-0	0\\
4.039	0	0\\
4.046	1.00000000000001	0\\
4.074	1	0\\
4.081	0	0\\
4.431	0	0\\
4.438	0.499999999999999	0.866025403784439\\
4.466	0.499999999999999	0.86602540378444\\
4.473	0	-0\\
4.823	0	-0\\
4.83	-0.5	0.866025403784438\\
4.858	-0.5	0.866025403784436\\
4.865	0	-0\\
5.215	-0	-0\\
5.222	-1	-0\\
5.25	-0.999999999999999	-0\\
5.257	-0	0\\
5.607	-0	0\\
5.614	-0.5	-0.866025403784438\\
5.642	-0.499999999999997	-0.866025403784436\\
5.649	0	0\\
5.999	0	-0\\
6.006	0.5	-0.866025403784438\\
6.027	0.5	-0.866025403784438\\
6.034	0.324766587893095	-0.562512230831794\\
6.041	0	0\\
6.391	-0	0\\
6.398	1	0\\
6.419	1	0\\
6.426	-0	0\\
6.783	-0	-0\\
6.79	0.499999999999999	0.866025403784439\\
6.811	0.499999999999999	0.866025403784439\\
6.818	-0	0\\
6.993	0	0\\
};
 \addplot3 [color=mycolor2, dashed, line width=1.5pt]
 table[row sep=crcr] {%
0	-1	-0\\
6.993	-1	-0\\
};
 \addplot3 [color=mycolor3, dashed, line width=1.5pt]
 table[row sep=crcr] {%
0	-0.5	-0.866025403784438\\
6.993	-0.5	-0.866025403784438\\
};
 \addplot3 [color=mycolor4, dashed, line width=1.5pt]
 table[row sep=crcr] {%
0	0.5	-0.866025403784438\\
6.993	0.5	-0.866025403784438\\
};
 \addplot3 [color=mycolor5, dashed, line width=1.5pt]
 table[row sep=crcr] {%
0	1	0\\
6.993	1	0\\
};
 \addplot3 [color=mycolor6, dashed, line width=1.5pt]
 table[row sep=crcr] {%
0	0.499999999999999	0.866025403784439\\
6.993	0.499999999999999	0.866025403784439\\
};
 \addplot3 [color=mycolor7, dashed, line width=1.5pt]
 table[row sep=crcr] {%
0	-0.5	0.866025403784438\\
6.993	-0.5	0.866025403784438\\
};
 \end{axis}
\end{tikzpicture}%
        \caption{control $\tilde u(t)$}\label{fig:Bloch2:u}
    \end{subfigure}
    \hfill
    \begin{subfigure}[t]{0.45\linewidth}
        \centering
        \input{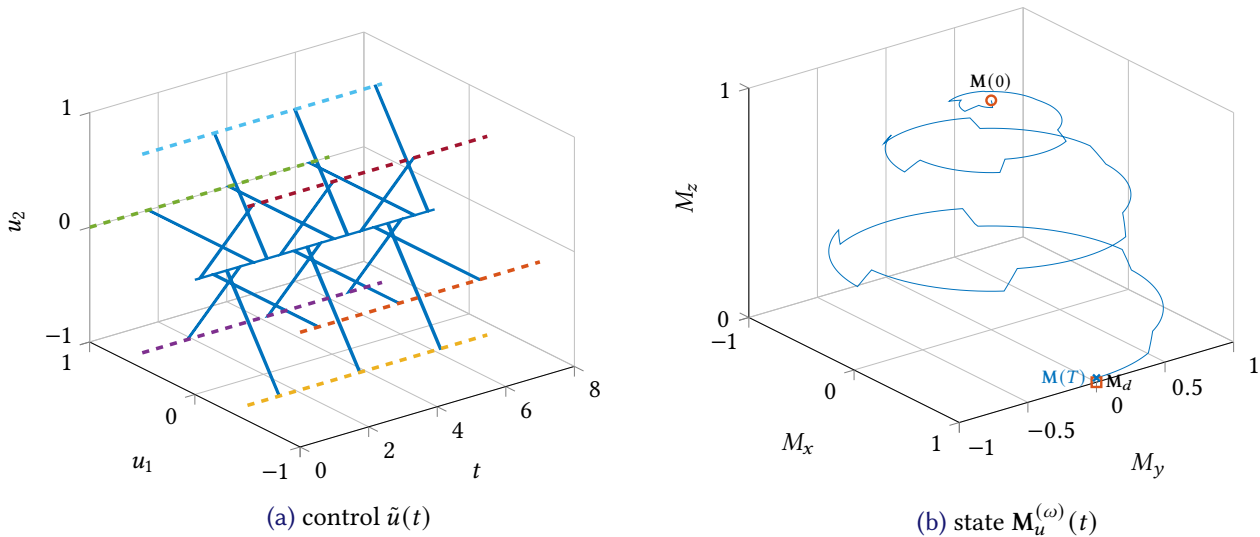}
        \caption{state $\mathbf{M}_u^{(\omega)}(t)$}\label{fig:Bloch2:M}
    \end{subfigure}
    \caption{Control and state for the Bloch model problem: $M=6$}\label{fig:Bloch2}
\end{figure} 

\Cref{tab:iterBloch} summarizes the convergence behavior for the case $M=3$. For a representative selection of values of $\gamma$, it shows the number of semismooth Newton iterations,
the average number of GMRES iterations needed to solve a Newton step, the number of times a step of length less than $1$ was taken, and the number of nodes $t_m$ for which $u_\gamma(t_m)\notin\calM$.
For moderate values of $\gamma$ (approximately $\gamma>10^{-6}$ in this case), very few iterations of both the semismooth Newton method and the inner GMRES method are required to reach the solution.
If $\gamma$ is decreased further, however, the problem starts becoming significantly more difficult, requiring an increasing number of Newton iterations that, in addition, require a damping to lead
to a decrease of the residual. These damped steps typically are taken after a few initial full steps and continue until the region of superlinear convergence is reached, after which the iteration
terminates after a small number of full steps. The average number of GMRES steps, however, remains small. For $\gamma < 9.313\cdot10^{-8}$, the maximal number of semismooth Newton iterations is
no longer sufficient to reach the given tolerance. However, the final row of the table demonstrates that already for $\gamma \approx 10^{-5}$ (where the convergence is still fast), the control is
already almost perfectly multibang.

\begin{table}[b]
    \caption{Convergence behavior for the example in \cref{fig:Bloch1}: number of semi-smooth Newton steps, average number of GMRES iterations to solve a Newton step, number of times a line search was required, and number of nodes $t_m$ with $u_\gamma(t_m)\notin\calM$}
    \label{tab:iterBloch}
    \centering
    \resizebox{\linewidth}{!}{%
        \begin{tabular}{lrrrrrrrrr}
            \toprule
            $\gamma$       & \num[round-precision=0]{1e2}  & \num[round-precision=0]{1.6e0} & \num[round-precision=0]{2.0e-1} & \num[round-precision=0]{1.2e-2} & \num[round-precision=0]{1.5e-3} & \num[round-precision=0]{1.9e-4} & \num[round-precision=0]{1.2e-5} & \num[round-precision=0]{1.5e-6} & \num[round-precision=0]{9.3e-8}\\
            \midrule
            \# SSN         & \num{3}    & \num{3}     & \num{4}      & \num{5}      & \num{5}      & \num{5}      & \num{4}      & \num{100}    & \num{101}\\
            avg. \# GMRES  & \num{3}    & \num{7}     & \num{7.5}    & \num{7.4}    & \num{7.8}    & \num{8.2}    & \num{3.75}   & \num{3.14}   & \num{4.3}  \\
            \# line search & \num{0}    & \num{0}     & \num{0}      & \num{0}      & \num{0}      & \num{0}      & \num{0}      & \num{98}     & \num{99} \\
            \# not MB     & \num{1000} & \num{1000}  & \num{862}    & \num{376}    & \num{191}    & \num{44}     & \num{3}      & \num{3}      & \num{3} \\
            \bottomrule
        \end{tabular}
    }
\end{table}

We now consider the simultaneous control of $J=4$ isochromats with $\omega = 10^{-2}\bar\gamma\cdot(1,2,3,4)$. \Cref{fig:Bloch3} shows the result if the same target $\mathbf M_d=(1,0,0)^T$ is specified for all isochromats. Again, we have a very close attainment of the target,
and again the control is zero most of the time, intermitted by regularly spaced intervals in which nonzero control values from $\calM$ are used.
This time, not all nonzero values from $\calM$ occur, but just $\bar u_2$ and $\bar u_3$ (indicated by the red and turquoise dashed lines).
In addition there are five time points at which control values outside $\calM$ are adopted, visible in the graph as short spikes emanating from $\tilde u=0$.
(Note, though, that these values still show the desired angles, merely at smaller than desired magnitudes.)
This may be due to the fact that in this example, the Newton method has failed to converge already for $\gamma<2\cdot 10^{-6}$.
In the more realistic case where only a single isochromat -- in this case $j=3$ -- is supposed to be excited (i.e., $\mathbf M_d=(1,0,0)^T$ for $\mathbf M^{(\omega_3)}$ and $\mathbf M_d=(0,0,1)^T$ otherwise), we again obtain a pure multibang control (see \cref{fig:Bloch4}).

\begin{figure}
    \centering
    \begin{subfigure}[t]{0.45\linewidth}
        \centering
        \begin{tikzpicture}[trim axis left,scale=0.9]

\begin{axis}[%
width=\linewidth,
scale only axis,
xmin=0,
xmax=8,
tick align=outside,
xlabel={$t$},
ymin=-1,
ymax=1,
ylabel={$u_1$},
zmin=-1,
zmax=1,
zlabel={$u_2$},
view={-37.5}{30},
axis background/.style={fill=white},
axis x line*=bottom,
axis y line*=left,
axis z line*=left,
xmajorgrids,
ymajorgrids,
zmajorgrids
]
\addplot3 [color=mycolor1, line width=1pt]
 table[row sep=crcr] {%
0	-0.5	0.866025403784438\\
0.0490000000000004	-0.5	0.866025403784438\\
0.056	-0.390135461161352	0.675734440565775\\
0.0629999999999997	-0	-0\\
0.651	-0	0\\
0.665	-0.237138774803105	-0\\
0.672	-0	-0\\
0.77	0	0\\
0.777	-0.10146386341886	-0.175740566573695\\
0.784	-0.164113039195898	-0.284252122071837\\
0.791	0	-0\\
1.428	-0	0\\
1.435	0.155726530747316	-0.269726263340789\\
1.442	0.124406663474223	-0.215478661937476\\
1.449	-0	0\\
1.547	-0	0\\
1.554	0.2517482849423	0\\
1.561	0.0485040870096149	0\\
1.568	0	0\\
2.17	0	0\\
2.177	0.00902189733162384	0.0156263845590425\\
2.184	0.499999999999999	0.866025403784439\\
2.289	0.499999999999999	0.866025403784439\\
2.296	-0	-0\\
2.303	-0	-0\\
2.31	-0.5	0.866025403784438\\
2.401	-0.5	0.866025403784438\\
2.408	-0.335528775544908	0.581152886645155\\
2.415	0	-0\\
3.129	0	0\\
3.136	-0.0084788176348729	-0.0146857429317109\\
3.143	0	0\\
4.522	0	-0\\
4.536	0.499999999999999	0.866025403784439\\
4.641	0.499999999999999	0.866025403784439\\
4.648	-0	-0\\
4.655	-0.5	0.866025403784438\\
4.753	-0.5	0.866025403784438\\
4.76	-0.349794162256198	0.605861261218727\\
4.767	0	-0\\
6.874	-0	0\\
6.881	0.358708208735582	0.62130084262205\\
6.888	0.499999999999999	0.866025403784439\\
6.986	0.499999999999999	0.866025403784439\\
6.993	-0	-0\\
};
 \addplot3 [color=mycolor2, dashed, line width=1.5pt]
 table[row sep=crcr] {%
0	-1	-0\\
6.993	-1	-0\\
};
 \addplot3 [color=mycolor3, dashed, line width=1.5pt]
 table[row sep=crcr] {%
0	-0.5	-0.866025403784438\\
6.993	-0.5	-0.866025403784438\\
};
 \addplot3 [color=mycolor4, dashed, line width=1.5pt]
 table[row sep=crcr] {%
0	0.5	-0.866025403784438\\
6.993	0.5	-0.866025403784438\\
};
 \addplot3 [color=mycolor5, dashed, line width=1.5pt]
 table[row sep=crcr] {%
0	1	0\\
6.993	1	0\\
};
 \addplot3 [color=mycolor6, dashed, line width=1.5pt]
 table[row sep=crcr] {%
0	0.499999999999999	0.866025403784439\\
6.993	0.499999999999999	0.866025403784439\\
};
 \addplot3 [color=mycolor7, dashed, line width=1.5pt]
 table[row sep=crcr] {%
0	-0.5	0.866025403784438\\
6.993	-0.5	0.866025403784438\\
};
 \end{axis}
\end{tikzpicture}%
        \caption{control $\tilde u(t)$}\label{fig:Bloch3:u}
    \end{subfigure}
    \hfill
    \begin{subfigure}[t]{0.45\linewidth}
        \centering
        \input{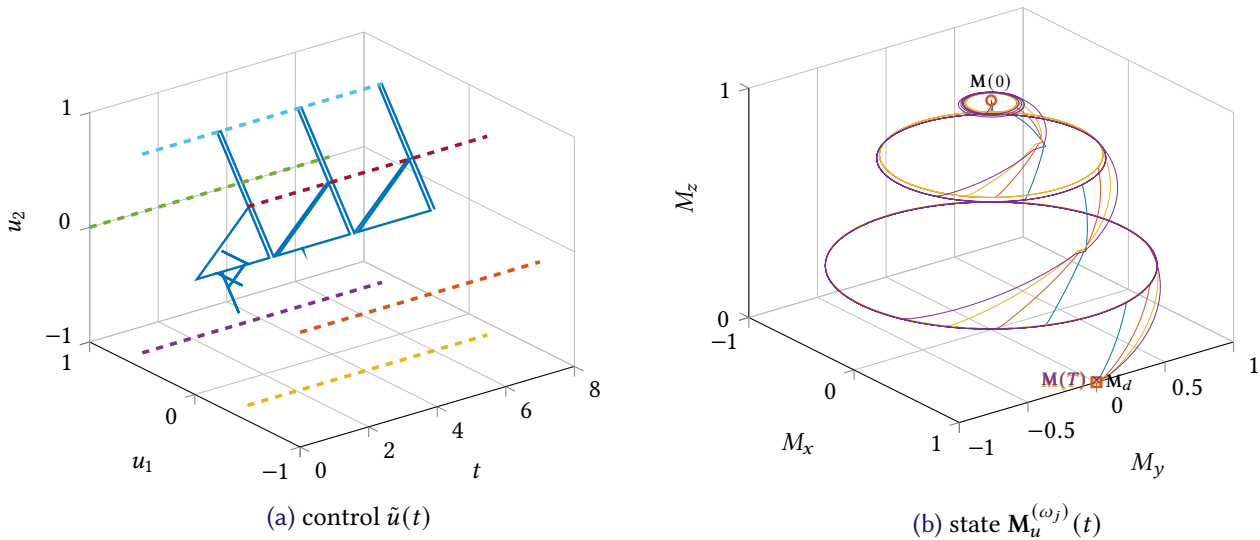}
        \caption{state $\mathbf{M}_u^{(\omega_j)}(t)$}\label{fig:Bloch3:M}
    \end{subfigure}
    \caption{Control and state for the Bloch model problem: $M=6$, $J=4$}\label{fig:Bloch3}
\end{figure}
\begin{figure}
    \centering
    \begin{subfigure}[t]{0.45\linewidth}
        \centering
        \begin{tikzpicture}[trim axis left,scale=0.9]

\begin{axis}[%
width=\linewidth,
scale only axis,
xmin=0,
xmax=8,
tick align=outside,
xlabel={$t$},
ymin=-1,
ymax=1,
ylabel={$u_1$},
zmin=-1,
zmax=1,
zlabel={$u_2$},
view={-37.5}{30},
axis background/.style={fill=white},
axis x line*=bottom,
axis y line*=left,
axis z line*=left,
xmajorgrids,
ymajorgrids,
zmajorgrids
]
\addplot3 [color=mycolor1, line width=1pt]
 table[row sep=crcr] {%
0	-0	0\\
0.00699999999999967	0	0\\
0.0140000000000002	-0.5	0.866025403784438\\
0.0209999999999999	0	0\\
0.133	0	0\\
0.14	-1	-0\\
0.147	-1	-0\\
0.154	-0	0\\
0.266	0	-0\\
0.273	-0.5	-0.866025403784438\\
0.28	-0.301760939368978	-0.522665278726782\\
0.287	-0	0\\
0.399	0	-0\\
0.406	0.5	-0.866025403784438\\
0.413	0	-0\\
0.525	-0	0\\
0.532	1	0\\
0.539	1	0\\
0.546	-0	0\\
0.658	-0	-0\\
0.665	0.499999999999999	0.866025403784439\\
0.679	0	0\\
0.784	-0	0\\
0.791	-0.124654847034217	0.215908528472991\\
0.798	-0.5	0.866025403784438\\
0.805	-0	-0\\
0.917	-0	-0\\
0.924	-1	-0\\
0.931	-1	-0\\
0.938	-0	0\\
1.05	-0	-0\\
1.057	-0.5	-0.866025403784438\\
1.064	-0.0110260488915124	-0.0190976768868376\\
1.071	-0	-0\\
1.176	0	0\\
1.183	0.194960546499996	-0.337681572009386\\
1.19	0.5	-0.866025403784438\\
1.197	-0	-0\\
1.309	0	0\\
1.316	1	0\\
1.323	1	0\\
1.33	-0	0\\
1.442	0	0\\
1.449	0.499999999999999	0.866025403784439\\
1.456	-0	-0\\
1.568	0	0\\
1.575	-0.353729988054552	0.612678311471217\\
1.582	-0.5	0.866025403784438\\
1.589	-0	-0\\
1.701	-0	-0\\
1.708	-1	-0\\
1.715	-0.8746925557469	-0\\
1.722	-0	0\\
1.834	0	0\\
1.841	-0.5	-0.866025403784438\\
1.848	-0	-0\\
1.96	-0	-0\\
1.967	0.5	-0.866025403784438\\
1.974	0.5	-0.866025403784438\\
1.981	0	-0\\
2.093	0	0\\
2.1	1	0\\
2.107	0.820583989450242	0\\
2.114	0	0\\
2.219	-0	-0\\
2.226	0.020826304043859	0.0360722167378409\\
2.233	0.499999999999999	0.866025403784439\\
2.24	-0	0\\
2.352	-0	-0\\
2.359	-0.5	0.866025403784438\\
2.366	-0.5	0.866025403784438\\
2.373	-0	0\\
2.485	-0	-0\\
2.492	-1	-0\\
2.506	-0	-0\\
2.611	-0	0\\
2.618	-0.107193807473524	-0.185665120800901\\
2.625	-0.5	-0.866025403784438\\
2.632	-0	-0\\
2.744	0	0\\
2.751	0.5	-0.866025403784438\\
2.758	0.5	-0.866025403784438\\
2.765	0	-0\\
2.877	0	0\\
2.884	1	0\\
2.891	0.199063844241419	0\\
2.898	-0	0\\
3.003	-0	-0\\
3.01	0.210361500514878	0.364356806848197\\
3.017	0.499999999999999	0.866025403784439\\
3.024	-0	0\\
3.136	0	0\\
3.143	-0.5	0.866025403784438\\
3.15	-0.5	0.866025403784438\\
3.157	-0	0\\
3.269	-0	-0\\
3.276	-1	-0\\
3.283	-0	-0\\
3.395	-0	-0\\
3.402	-0.322207954766399	-0.558080548258258\\
3.409	-0.5	-0.866025403784438\\
3.416	-0	0\\
3.528	-0	0\\
3.535	0.5	-0.866025403784438\\
3.542	0.5	-0.866025403784438\\
3.549	0	-0\\
3.661	0	0\\
3.668	1	0\\
3.675	-0	0\\
3.787	0	0\\
3.794	0.367232102962029	0.636064660500599\\
3.801	0.499999999999999	0.866025403784439\\
3.808	0	-0\\
3.92	0	0\\
3.927	-0.5	0.866025403784438\\
3.934	-0.396769649352417	0.687225191579675\\
3.941	-0	-0\\
4.053	0	0\\
4.06	-1	-0\\
4.067	-0	-0\\
4.179	-0	0\\
4.186	-0.5	-0.866025403784438\\
4.193	-0.5	-0.866025403784438\\
4.2	-0	0\\
4.312	-0	-0\\
4.319	0.5	-0.866025403784438\\
4.326	0.308492440046711	-0.534324579911799\\
4.333	-0	0\\
4.438	0	0\\
4.445	0.099686261469107	0\\
4.452	1	0\\
4.459	0	0\\
4.571	-0	0\\
4.578	0.499999999999999	0.866025403784439\\
4.585	0.499999999999999	0.866025403784439\\
4.592	0	0\\
4.704	-0	0\\
4.711	-0.5	0.866025403784438\\
4.725	0	0\\
4.83	0	0\\
4.844	-1	-0\\
4.851	-0	-0\\
4.963	0	-0\\
4.97	-0.5	-0.866025403784438\\
4.977	-0.5	-0.866025403784438\\
4.984	-0	-0\\
5.096	0	-0\\
5.103	0.5	-0.866025403784438\\
5.11	0	-0\\
5.222	0	0\\
5.236	1	0\\
5.243	-0	0\\
5.355	0	-0\\
5.362	0.499999999999999	0.866025403784439\\
5.369	0.499999999999999	0.866025403784439\\
5.376	0	-0\\
5.488	-0	0\\
5.495	-0.5	0.866025403784438\\
5.502	-0	0\\
5.614	-0	-0\\
5.621	-0.847035175587634	-0\\
5.628	-1	-0\\
5.635	0	0\\
5.747	-0	-0\\
5.754	-0.5	-0.866025403784438\\
5.761	-0.5	-0.866025403784438\\
5.768	-0	-0\\
5.88	-0	0\\
5.887	0.5	-0.866025403784438\\
5.894	-0	0\\
6.006	0	0\\
6.013	0.872181644123008	0\\
6.02	1	0\\
6.027	0	0\\
6.139	-0	0\\
6.146	0.499999999999999	0.866025403784439\\
6.153	0.396574145549109	0.686886569059273\\
6.16	-0	0\\
6.272	0	-0\\
6.279	-0.5	0.866025403784438\\
6.286	-0	0\\
6.398	0	0\\
6.405	-1	-0\\
6.412	-1	-0\\
6.419	-0	0\\
6.531	-0	-0\\
6.538	-0.5	-0.866025403784438\\
6.545	-0.17042063016699	-0.295177190107132\\
6.552	0	-0\\
6.657	0	0\\
6.664	0.0155491830246897	-0.0269319750149499\\
6.671	0.5	-0.866025403784438\\
6.678	0	-0\\
6.79	0	0\\
6.797	1	0\\
6.804	1	0\\
6.811	-0	0\\
6.923	0	0\\
6.93	0.499999999999999	0.866025403784439\\
6.937	0.0852671501867031	0.147687036339977\\
6.944	-0	0\\
6.993	0	0\\
};
 \addplot3 [color=mycolor2, dashed, line width=1.5pt]
 table[row sep=crcr] {%
0	-1	-0\\
6.993	-1	-0\\
};
 \addplot3 [color=mycolor3, dashed, line width=1.5pt]
 table[row sep=crcr] {%
0	-0.5	-0.866025403784438\\
6.993	-0.5	-0.866025403784438\\
};
 \addplot3 [color=mycolor4, dashed, line width=1.5pt]
 table[row sep=crcr] {%
0	0.5	-0.866025403784438\\
6.993	0.5	-0.866025403784438\\
};
 \addplot3 [color=mycolor5, dashed, line width=1.5pt]
 table[row sep=crcr] {%
0	1	0\\
6.993	1	0\\
};
 \addplot3 [color=mycolor6, dashed, line width=1.5pt]
 table[row sep=crcr] {%
0	0.499999999999999	0.866025403784439\\
6.993	0.499999999999999	0.866025403784439\\
};
 \addplot3 [color=mycolor7, dashed, line width=1.5pt]
 table[row sep=crcr] {%
0	-0.5	0.866025403784438\\
6.993	-0.5	0.866025403784438\\
};
 \end{axis}
\end{tikzpicture}%
        \caption{control $\tilde u(t)$}\label{fig:Bloch4:u}
    \end{subfigure}
    \hfill
    \begin{subfigure}[t]{0.45\linewidth}
        \centering
        \input{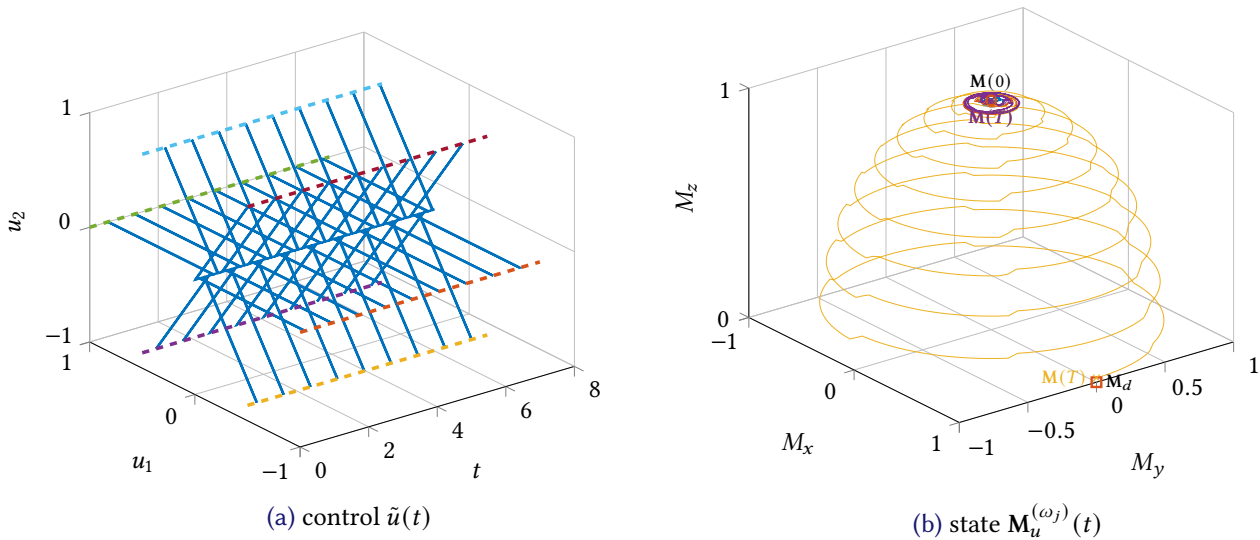}
        \caption{state $\mathbf{M}_u^{(\omega_j)}(t)$}\label{fig:Bloch4:M}
    \end{subfigure}
    \caption{Control and state for the Bloch model problem: $M=6$, $J=4$, $\mathbf M_d = (0,0,1)$ for $j=3$ and $\mathbf M_0$ otherwise}\label{fig:Bloch4}
\end{figure} 

\subsection{Linearized elasticity}

We now address the behavior in the context of optimal control of elliptic partial differential equations for the model equations of two-dimensional linearized elasticity.
Here, we choose $\Omega=[0,1]\times[0,2]$ and $\Gamma=[0,1]\times\{0\}$, which models an elastic beam clamped at the bottom. The
Lam\'e parameters are set to $\mu = \frac{E}{2(1+\nu)}$ and $\lambda = \frac{E\nu}{(1+\nu)(1-2\nu)}$ for the elastic modulus $E=20$ and the Poisson ratio $\nu =0.3$. We use a uniform structured mesh with $129$ vertices in each direction.
Since the state equation is linear, we use a direct solver for the Newton step. The Newton iteration is terminated if the active sets (i.e., the case distinctions in the definition of the Moreau--Yosida regularization) for each node coincide for two consecutive iterations, or if $50$ iterations are exceeded. The continuation in the regularization parameter $\gamma$ is performed as for the Bloch equation.

\Cref{fig:elasticExamples} shows the results for six different choices of target, multibang penalty, and control cost parameter.
In \crefrange{fig:elastic:1}{fig:elastic:4}, the target displacement $z(x)=R(x-(\frac12,1)^T)-x$ corresponds to a rotation $R\in SO(2)$ of the solid around its center.
\Cref{fig:elastic:1,fig:elastic:2} use the penalty from \cref{sec:multibang:bloch} for $\alpha=10^{-3}$, while \cref{fig:elastic:3,fig:elastic:4} use the penalty from \cref{sec:multibang:elast} for $\alpha=10^{-5}$ and $\alpha=10^{-3}$, respectively.
In all cases, the obtained control makes use of all control values in $\calM$ and aligns them with the rotation.
Furthermore, the center of the force vortex always lies slightly to the top right of the rotation center of the target state;
this allows a stronger overall rightward force in the lower part of the solid to compensate for the clamping at the bottom.
Note that unlike the case of (additional) gradient regularization of the control, small patches or sharp corners of the domains with homogeneous force are allowed.
\begin{figure}[t] 
    \begin{subfigure}[t]{0.15\linewidth}
        \centering
        \begin{tikzpicture}[x=\linewidth,y=\linewidth]
\path[use as bounding box] (-.5,-.55) rectangle (.5,.5);
\begin{axis}[%
    width=0.66\linewidth,
    height=0.66\linewidth,
    at={(-.33\linewidth,-.33\linewidth)},
    scale only axis,
    axis on top,
    xmin=-128,
    xmax=128,
    ymin=-128,
    ymax=128,
    axis line style={draw=none},
    ticks=none
    ]
    \addplot  graphics [xmin=-128,xmax=128,ymin=-128,ymax=128] {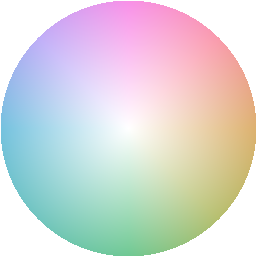};
\end{axis}
\begin{polaraxis}[%
    width=0.66\linewidth,
    height=0.66\linewidth,
    at={(-.33\linewidth,-.33\linewidth)},
    ymin=0,
    ymax=256,
    scale only axis,
    xtick={60,180,300},
    ytick={0,256},
    xticklabels={$\frac{\pi}3$,$\pi$,$-\frac{\pi}3$},
    yticklabels={$0$,$\sqrt{8}$},
    yticklabel style = {font=\tiny}
    ]
\end{polaraxis}
\end{tikzpicture}
        \begin{tikzpicture}[x=\linewidth,y=\linewidth]
\path[use as bounding box] (0,0) -- (1,0) -- (1,2.1) -- (0,2.1) -- cycle;    

\begin{axis}[%
width=\linewidth,
height=2\linewidth,
scale only axis,
axis on top,
clip=false,
xmin=0,
xmax=1,
ymin=0,
ymax=2,
axis background/.style={fill=white},
ticks=none
]
\addplot [forget plot] graphics [xmin=0, xmax=1, ymin=0, ymax=2] {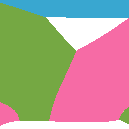};
\addplot[-Latex, 
color=black, 
point meta={sqrt((\thisrow{u})^2+(\thisrow{v})^2)}, 
point meta min=0, 
quiver={u=1.5*\thisrow{u}, v=1.5*\thisrow{v}, 
    every arrow/.append style={-{Latex[scale={1/1000*\pgfplotspointmetatransformed}]}}}]
 table[row sep=crcr] {%
x	y	u	v\\
0.0546875	0.109375	0.05	0.0866025403784439\\
0.0546875	0.359375	0.05	0.0866025403784439\\
0.0546875	0.609375	0.05	-0.0866025403784439\\
0.0546875	0.859375	0.05	-0.0866025403784439\\
0.0546875	1.109375	0.05	-0.0866025403784439\\
0.0546875	1.359375	0.05	-0.0866025403784439\\
0.0546875	1.609375	0.05	-0.0866025403784439\\
0.0546875	1.859375	0.05	-0.0866025403784439\\
0.1796875	0.359375	0.05	-0.0866025403784439\\
0.1796875	0.609375	0.05	-0.0866025403784439\\
0.1796875	0.859375	0.05	-0.0866025403784439\\
0.1796875	1.109375	0.05	-0.0866025403784439\\
0.1796875	1.359375	0.05	-0.0866025403784439\\
0.1796875	1.609375	0.05	-0.0866025403784439\\
0.1796875	1.859375	-0.1	-1.22464679914735e-17\\
0.3046875	0.359375	0.05	-0.0866025403784439\\
0.3046875	0.609375	0.05	-0.0866025403784439\\
0.3046875	0.859375	0.05	-0.0866025403784439\\
0.3046875	1.109375	0.05	-0.0866025403784439\\
0.3046875	1.359375	0.05	-0.0866025403784439\\
0.3046875	1.609375	0.05	-0.0866025403784439\\
0.3046875	1.859375	-0.1	-1.22464679914735e-17\\
0.4296875	0.109375	0.05	0.0866025403784439\\
0.4296875	0.359375	0.05	0.0866025403784439\\
0.4296875	0.609375	0.05	-0.0866025403784439\\
0.4296875	0.859375	0.05	-0.0866025403784439\\
0.4296875	1.109375	0.05	-0.0866025403784439\\
0.4296875	1.359375	0.05	-0.0866025403784439\\
0.4296875	1.859375	-0.1	-1.22464679914735e-17\\
0.5546875	0.109375	0.05	0.0866025403784439\\
0.5546875	0.359375	0.05	0.0866025403784439\\
0.5546875	0.609375	0.05	0.0866025403784439\\
0.5546875	0.859375	0.05	0.0866025403784439\\
0.5546875	1.109375	0.05	-0.0866025403784439\\
0.5546875	1.859375	-0.1	-1.22464679914735e-17\\
0.6796875	0.109375	0.05	0.0866025403784439\\
0.6796875	0.359375	0.05	0.0866025403784439\\
0.6796875	0.609375	0.05	0.0866025403784439\\
0.6796875	0.859375	0.05	0.0866025403784439\\
0.6796875	1.109375	0.05	0.0866025403784439\\
0.6796875	1.859375	-0.1	-1.22464679914735e-17\\
0.8046875	0.109375	0.05	0.0866025403784439\\
0.8046875	0.359375	0.05	0.0866025403784439\\
0.8046875	0.609375	0.05	0.0866025403784439\\
0.8046875	0.859375	0.05	0.0866025403784439\\
0.8046875	1.109375	0.05	0.0866025403784439\\
0.8046875	1.359375	0.05	0.0866025403784439\\
0.8046875	1.859375	-0.1	-1.22464679914735e-17\\
0.9296875	0.359375	0.05	0.0866025403784439\\
0.9296875	0.609375	0.05	0.0866025403784439\\
0.9296875	0.859375	0.05	0.0866025403784439\\
0.9296875	1.109375	0.05	0.0866025403784439\\
0.9296875	1.359375	0.05	0.0866025403784439\\
0.9296875	1.609375	0.05	0.0866025403784439\\
0.9296875	1.859375	-0.1	-1.22464679914735e-17\\
};
\end{axis}
\end{tikzpicture}
        \input{elast_rad3_deform.tikz}
        \caption{radial, $d=3$, $\alpha = 10^{-3}$}\label{fig:elastic:1}
    \end{subfigure}
    \hfill
    \begin{subfigure}[t]{0.15\linewidth}
        \centering
        \begin{tikzpicture}[x=\linewidth,y=\linewidth]
\path[use as bounding box] (-.5,-.55) rectangle (.5,.5);
\begin{axis}[%
    width=0.66\linewidth,
    height=0.66\linewidth,
    at={(-.33\linewidth,-.33\linewidth)},
    scale only axis,
    axis on top,
    xmin=-128,
    xmax=128,
    ymin=-128,
    ymax=128,
    axis line style={draw=none},
    ticks=none
    ]
    \addplot  graphics [xmin=-128,xmax=128,ymin=-128,ymax=128] {wheel.png};
\end{axis}
\begin{polaraxis}[%
    width=0.66\linewidth,
    height=0.66\linewidth,
    at={(-.33\linewidth,-.33\linewidth)},
    ymin=0,
    ymax=256,
    scale only axis,
    xtick={36,108,180,252,324},
    ytick={0,256},
    xticklabels={$\frac{\pi}5$,$\frac{2\pi}5$,$\pi$,$-\frac{2\pi}5$,$-\frac{\pi}5$},
    yticklabels={$0$,$\sqrt{8}$},
    yticklabel style = {font=\tiny}
    ]
\end{polaraxis}
\end{tikzpicture}
        \begin{tikzpicture}[x=\linewidth,y=\linewidth]
\path[use as bounding box] (0,0) -- (1,0) -- (1,2.1) -- (0,2.1) -- cycle;    

\begin{axis}[%
width=\linewidth,
height=2\linewidth,
scale only axis,
axis on top,
clip=false,
xmin=0,
xmax=1,
ymin=0,
ymax=2,
axis background/.style={fill=white},
ticks=none
]
\addplot [forget plot] graphics [xmin=0, xmax=1, ymin=0, ymax=2] {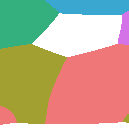};
\addplot[-Latex, 
color=black, 
point meta={sqrt((\thisrow{u})^2+(\thisrow{v})^2)}, 
point meta min=0, 
quiver={u=1.5*\thisrow{u}, v=1.5*\thisrow{v}, 
    every arrow/.append style={-{Latex[scale={1/1000*\pgfplotspointmetatransformed}]}}}]
 table[row sep=crcr] {%
x	y	u	v\\
0.0546875	0.109375	0.0809016994374947	0.0587785252292473\\
0.0546875	0.359375	0.0809016994374947	-0.0587785252292473\\
0.0546875	0.609375	0.0809016994374947	-0.0587785252292473\\
0.0546875	0.859375	0.0809016994374947	-0.0587785252292473\\
0.0546875	1.109375	0.0809016994374947	-0.0587785252292473\\
0.0546875	1.359375	-0.0309016994374947	-0.0951056516295154\\
0.0546875	1.609375	-0.0309016994374947	-0.0951056516295154\\
0.0546875	1.859375	-0.0309016994374947	-0.0951056516295154\\
0.1796875	0.109375	0.0809016994374947	-0.0587785252292473\\
0.1796875	0.359375	0.0809016994374947	-0.0587785252292473\\
0.1796875	0.609375	0.0809016994374947	-0.0587785252292473\\
0.1796875	0.859375	0.0809016994374947	-0.0587785252292473\\
0.1796875	1.109375	0.0809016994374947	-0.0587785252292473\\
0.1796875	1.359375	-0.0309016994374947	-0.0951056516295154\\
0.1796875	1.609375	-0.0309016994374947	-0.0951056516295154\\
0.1796875	1.859375	-0.0309016994374947	-0.0951056516295154\\
0.3046875	0.109375	0.0809016994374947	-0.0587785252292473\\
0.3046875	0.359375	0.0809016994374947	-0.0587785252292473\\
0.3046875	0.609375	0.0809016994374947	-0.0587785252292473\\
0.3046875	0.859375	0.0809016994374947	-0.0587785252292473\\
0.3046875	1.109375	0.0809016994374947	-0.0587785252292473\\
0.3046875	1.609375	-0.0309016994374947	-0.0951056516295154\\
0.3046875	1.859375	-0.0309016994374947	-0.0951056516295154\\
0.4296875	0.109375	0.0809016994374947	0.0587785252292473\\
0.4296875	0.359375	0.0809016994374947	0.0587785252292473\\
0.4296875	0.609375	0.0809016994374947	0.0587785252292473\\
0.4296875	0.859375	0.0809016994374947	-0.0587785252292473\\
0.4296875	1.109375	0.0809016994374947	-0.0587785252292473\\
0.4296875	1.859375	-0.0309016994374947	-0.0951056516295154\\
0.5546875	0.109375	0.0809016994374947	0.0587785252292473\\
0.5546875	0.359375	0.0809016994374947	0.0587785252292473\\
0.5546875	0.609375	0.0809016994374947	0.0587785252292473\\
0.5546875	0.859375	0.0809016994374947	0.0587785252292473\\
0.5546875	1.109375	0.0809016994374947	0.0587785252292473\\
0.5546875	1.859375	-0.1	-1.22464679914735e-17\\
0.6796875	0.109375	0.0809016994374947	0.0587785252292473\\
0.6796875	0.359375	0.0809016994374947	0.0587785252292473\\
0.6796875	0.609375	0.0809016994374947	0.0587785252292473\\
0.6796875	0.859375	0.0809016994374947	0.0587785252292473\\
0.6796875	1.109375	0.0809016994374947	0.0587785252292473\\
0.6796875	1.859375	-0.1	-1.22464679914735e-17\\
0.8046875	0.109375	0.0809016994374947	0.0587785252292473\\
0.8046875	0.359375	0.0809016994374947	0.0587785252292473\\
0.8046875	0.609375	0.0809016994374947	0.0587785252292473\\
0.8046875	0.859375	0.0809016994374947	0.0587785252292473\\
0.8046875	1.109375	0.0809016994374947	0.0587785252292473\\
0.8046875	1.859375	-0.1	-1.22464679914735e-17\\
0.9296875	0.109375	0.0809016994374947	0.0587785252292473\\
0.9296875	0.359375	0.0809016994374947	0.0587785252292473\\
0.9296875	0.609375	0.0809016994374947	0.0587785252292473\\
0.9296875	0.859375	0.0809016994374947	0.0587785252292473\\
0.9296875	1.109375	0.0809016994374947	0.0587785252292473\\
0.9296875	1.359375	-0.0309016994374947	0.0951056516295154\\
0.9296875	1.859375	-0.1	-1.22464679914735e-17\\
};
\end{axis}
\end{tikzpicture}
        \input{elast_rad5_deform.tikz}
        \caption{radial, $d=5$, $\alpha = 10^{-3}$}\label{fig:elastic:2}
    \end{subfigure}
    \hfill
    \begin{subfigure}[t]{0.15\linewidth}
        \centering
        \begin{tikzpicture}[x=\linewidth,y=\linewidth]
\path[use as bounding box] (-.5,-.55) rectangle (.5,.5);
\begin{axis}[%
    width=0.66\linewidth,
    height=0.66\linewidth,
    at={(-.33\linewidth,-.33\linewidth)},
    scale only axis,
    axis on top,
    xmin=-128,
    xmax=128,
    ymin=-128,
    ymax=128,
    axis line style={draw=none},
    ticks=none
    ]
    \addplot  graphics [xmin=-128,xmax=128,ymin=-128,ymax=128] {wheel.png};
\end{axis}
\begin{polaraxis}[%
    width=0.66\linewidth,
    height=0.66\linewidth,
    at={(-.33\linewidth,-.33\linewidth)},
    ymin=0,
    ymax=256,
    scale only axis,
    xtick={45,135,225,315},
    ytick={128,256},
    xticklabels={$\frac{\pi}4$,$\frac{3\pi}{4}\!$,$\frac{-3\pi}4\!$,$\frac{-\pi}4$},
    yticklabels={$\sqrt{2}$,$\sqrt{8}$},
    yticklabel style = {font=\tiny}
    ]
\end{polaraxis}
\end{tikzpicture}
        \begin{tikzpicture}[x=\linewidth,y=\linewidth]
\path[use as bounding box] (0,0) -- (1,0) -- (1,2.1) -- (0,2.1) -- cycle;    

\begin{axis}[%
width=\linewidth,
height=2\linewidth,
scale only axis,
axis on top,
clip=false,
xmin=0,
xmax=1,
ymin=0,
ymax=2,
axis background/.style={fill=white},
ticks=none
]
\addplot [forget plot] graphics [xmin=0, xmax=1, ymin=0, ymax=2] {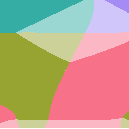};
\addplot[-Latex, 
color=black, 
point meta={sqrt((\thisrow{u})^2+(\thisrow{v})^2)}, 
point meta min=0, 
quiver={u=1.5*\thisrow{u}, v=1.5*\thisrow{v}, 
    every arrow/.append style={-{Latex[scale={1/1000*\pgfplotspointmetatransformed}]}}}]
 table[row sep=crcr] {%
x	y	u	v\\
0.0546875	0.109375	0.0353553390593274	0.0353553390593274\\
0.0546875	0.359375	0.0707106781186548	-0.0707106781186548\\
0.0546875	0.609375	0.0707106781186548	-0.0707106781186548\\
0.0546875	0.859375	0.0707106781186548	-0.0707106781186548\\
0.0546875	1.109375	0.0707106781186548	-0.0707106781186548\\
0.0546875	1.359375	0.0707106781186548	-0.0707106781186548\\
0.0546875	1.609375	-0.0707106781186548	-0.0707106781186548\\
0.0546875	1.859375	-0.0707106781186548	-0.0707106781186548\\
0.1796875	0.109375	0.0353553390593274	-0.0353553390593274\\
0.1796875	0.359375	0.0707106781186548	-0.0707106781186548\\
0.1796875	0.609375	0.0707106781186548	-0.0707106781186548\\
0.1796875	0.859375	0.0707106781186548	-0.0707106781186548\\
0.1796875	1.109375	0.0707106781186548	-0.0707106781186548\\
0.1796875	1.359375	0.0707106781186548	-0.0707106781186548\\
0.1796875	1.609375	-0.0707106781186548	-0.0707106781186548\\
0.1796875	1.859375	-0.0707106781186548	-0.0707106781186548\\
0.3046875	0.109375	0.0353553390593274	-0.0353553390593274\\
0.3046875	0.359375	0.0707106781186548	-0.0707106781186548\\
0.3046875	0.609375	0.0707106781186548	-0.0707106781186548\\
0.3046875	0.859375	0.0707106781186548	-0.0707106781186548\\
0.3046875	1.109375	0.0707106781186548	-0.0707106781186548\\
0.3046875	1.359375	0.0353553390593274	-0.0353553390593274\\
0.3046875	1.609375	-0.0353553390593274	-0.0353553390593274\\
0.3046875	1.859375	-0.0707106781186548	-0.0707106781186548\\
0.4296875	0.109375	0.0353553390593274	0.0353553390593274\\
0.4296875	0.359375	0.0707106781186548	0.0707106781186548\\
0.4296875	0.609375	0.0707106781186548	-0.0707106781186548\\
0.4296875	0.859375	0.0707106781186548	-0.0707106781186548\\
0.4296875	1.109375	0.0707106781186548	-0.0707106781186548\\
0.4296875	1.359375	0.0353553390593274	-0.0353553390593274\\
0.4296875	1.609375	-0.0353553390593274	-0.0353553390593274\\
0.4296875	1.859375	-0.0707106781186548	-0.0707106781186548\\
0.5546875	0.109375	0.0353553390593274	0.0353553390593274\\
0.5546875	0.359375	0.0707106781186548	0.0707106781186548\\
0.5546875	0.609375	0.0707106781186548	0.0707106781186548\\
0.5546875	0.859375	0.0707106781186548	0.0707106781186548\\
0.5546875	1.109375	0.0353553390593274	-0.0353553390593274\\
0.5546875	1.359375	0.0353553390593274	-0.0353553390593274\\
0.5546875	1.609375	-0.0353553390593274	-0.0353553390593274\\
0.5546875	1.859375	-0.0353553390593274	-0.0353553390593274\\
0.6796875	0.109375	0.0353553390593274	0.0353553390593274\\
0.6796875	0.359375	0.0707106781186548	0.0707106781186548\\
0.6796875	0.609375	0.0707106781186548	0.0707106781186548\\
0.6796875	0.859375	0.0707106781186548	0.0707106781186548\\
0.6796875	1.109375	0.0707106781186548	0.0707106781186548\\
0.6796875	1.359375	0.0353553390593274	0.0353553390593274\\
0.6796875	1.609375	-0.0353553390593274	-0.0353553390593274\\
0.6796875	1.859375	-0.0353553390593274	-0.0353553390593274\\
0.8046875	0.109375	0.0353553390593274	0.0353553390593274\\
0.8046875	0.359375	0.0707106781186548	0.0707106781186548\\
0.8046875	0.609375	0.0707106781186548	0.0707106781186548\\
0.8046875	0.859375	0.0707106781186548	0.0707106781186548\\
0.8046875	1.109375	0.0707106781186548	0.0707106781186548\\
0.8046875	1.359375	0.0353553390593274	0.0353553390593274\\
0.8046875	1.609375	-0.0353553390593274	0.0353553390593274\\
0.8046875	1.859375	-0.0353553390593274	0.0353553390593274\\
0.9296875	0.109375	0.0353553390593274	0.0353553390593274\\
0.9296875	0.359375	0.0707106781186548	0.0707106781186548\\
0.9296875	0.609375	0.0707106781186548	0.0707106781186548\\
0.9296875	0.859375	0.0707106781186548	0.0707106781186548\\
0.9296875	1.109375	0.0707106781186548	0.0707106781186548\\
0.9296875	1.359375	0.0353553390593274	0.0353553390593274\\
0.9296875	1.609375	-0.0353553390593274	0.0353553390593274\\
0.9296875	1.859375	-0.0353553390593274	0.0353553390593274\\
};
\end{axis}
\end{tikzpicture}
        \input{elast_box-3_deform.tikz}
        \caption{concentric, $\alpha = 10^{-3}$}\label{fig:elastic:3}
    \end{subfigure}
    \hfill
    \begin{subfigure}[t]{0.15\linewidth}
        \centering
        \begin{tikzpicture}[x=\linewidth,y=\linewidth]
\path[use as bounding box] (-.5,-.55) rectangle (.5,.5);
\begin{axis}[%
    width=0.66\linewidth,
    height=0.66\linewidth,
    at={(-.33\linewidth,-.33\linewidth)},
    scale only axis,
    axis on top,
    xmin=-128,
    xmax=128,
    ymin=-128,
    ymax=128,
    axis line style={draw=none},
    ticks=none
    ]
    \addplot  graphics [xmin=-128,xmax=128,ymin=-128,ymax=128] {wheel.png};
\end{axis}
\begin{polaraxis}[%
    width=0.66\linewidth,
    height=0.66\linewidth,
    at={(-.33\linewidth,-.33\linewidth)},
    ymin=0,
    ymax=256,
    scale only axis,
    xtick={45,135,225,315},
    ytick={128,256},
    xticklabels={$\frac{\pi}4$,$\frac{3\pi}{4}\!$,$\frac{-3\pi}4\!$,$\frac{-\pi}4$},
    yticklabels={$\sqrt{2}$,$\sqrt{8}$},
    yticklabel style = {font=\tiny}
    ]
\end{polaraxis}
\end{tikzpicture}
        \begin{tikzpicture}[x=\linewidth,y=\linewidth]
\path[use as bounding box] (0,0) -- (1,0) -- (1,2.1) -- (0,2.1) -- cycle;    

\begin{axis}[%
width=\linewidth,
height=2\linewidth,
scale only axis,
axis on top,
clip=false,
xmin=0,
xmax=1,
ymin=0,
ymax=2,
axis background/.style={fill=white},
ticks=none
]
\addplot [forget plot] graphics [xmin=0, xmax=1, ymin=0, ymax=2] {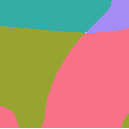};
\addplot[-Latex, 
color=black, 
point meta={sqrt((\thisrow{u})^2+(\thisrow{v})^2)}, 
point meta min=0, 
quiver={u=1.5*\thisrow{u}, v=1.5*\thisrow{v}, 
    every arrow/.append style={-{Latex[scale={1/1000*\pgfplotspointmetatransformed}]}}}]
 table[row sep=crcr] {%
x	y	u	v\\
0.0546875	0.109375	0.0707106781186548	0.0707106781186548\\
0.0546875	0.359375	0.0707106781186548	-0.0707106781186548\\
0.0546875	0.609375	0.0707106781186548	-0.0707106781186548\\
0.0546875	0.859375	0.0707106781186548	-0.0707106781186548\\
0.0546875	1.109375	0.0707106781186548	-0.0707106781186548\\
0.0546875	1.359375	0.0707106781186548	-0.0707106781186548\\
0.0546875	1.609375	-0.0707106781186548	-0.0707106781186548\\
0.0546875	1.859375	-0.0707106781186548	-0.0707106781186548\\
0.1796875	0.109375	0.0707106781186548	-0.0707106781186548\\
0.1796875	0.359375	0.0707106781186548	-0.0707106781186548\\
0.1796875	0.609375	0.0707106781186548	-0.0707106781186548\\
0.1796875	0.859375	0.0707106781186548	-0.0707106781186548\\
0.1796875	1.109375	0.0707106781186548	-0.0707106781186548\\
0.1796875	1.359375	0.0707106781186548	-0.0707106781186548\\
0.1796875	1.609375	-0.0707106781186548	-0.0707106781186548\\
0.1796875	1.859375	-0.0707106781186548	-0.0707106781186548\\
0.3046875	0.109375	0.0707106781186548	-0.0707106781186548\\
0.3046875	0.359375	0.0707106781186548	-0.0707106781186548\\
0.3046875	0.609375	0.0707106781186548	-0.0707106781186548\\
0.3046875	0.859375	0.0707106781186548	-0.0707106781186548\\
0.3046875	1.109375	0.0707106781186548	-0.0707106781186548\\
0.3046875	1.359375	0.0707106781186548	-0.0707106781186548\\
0.3046875	1.609375	-0.0707106781186548	-0.0707106781186548\\
0.3046875	1.859375	-0.0707106781186548	-0.0707106781186548\\
0.4296875	0.109375	0.0707106781186548	0.0707106781186548\\
0.4296875	0.359375	0.0707106781186548	0.0707106781186548\\
0.4296875	0.609375	0.0707106781186548	0.0707106781186548\\
0.4296875	0.859375	0.0707106781186548	-0.0707106781186548\\
0.4296875	1.109375	0.0707106781186548	-0.0707106781186548\\
0.4296875	1.359375	0.0707106781186548	-0.0707106781186548\\
0.4296875	1.609375	-0.0707106781186548	-0.0707106781186548\\
0.4296875	1.859375	-0.0707106781186548	-0.0707106781186548\\
0.5546875	0.109375	0.0707106781186548	0.0707106781186548\\
0.5546875	0.359375	0.0707106781186548	0.0707106781186548\\
0.5546875	0.609375	0.0707106781186548	0.0707106781186548\\
0.5546875	0.859375	0.0707106781186548	0.0707106781186548\\
0.5546875	1.109375	0.0707106781186548	0.0707106781186548\\
0.5546875	1.359375	0.0707106781186548	-0.0707106781186548\\
0.5546875	1.609375	-0.0707106781186548	-0.0707106781186548\\
0.5546875	1.859375	-0.0707106781186548	-0.0707106781186548\\
0.6796875	0.109375	0.0707106781186548	0.0707106781186548\\
0.6796875	0.359375	0.0707106781186548	0.0707106781186548\\
0.6796875	0.609375	0.0707106781186548	0.0707106781186548\\
0.6796875	0.859375	0.0707106781186548	0.0707106781186548\\
0.6796875	1.109375	0.0707106781186548	0.0707106781186548\\
0.6796875	1.359375	0.0707106781186548	0.0707106781186548\\
0.6796875	1.609375	-0.0707106781186548	-0.0707106781186548\\
0.6796875	1.859375	-0.0707106781186548	-0.0707106781186548\\
0.8046875	0.109375	0.0707106781186548	0.0707106781186548\\
0.8046875	0.359375	0.0707106781186548	0.0707106781186548\\
0.8046875	0.609375	0.0707106781186548	0.0707106781186548\\
0.8046875	0.859375	0.0707106781186548	0.0707106781186548\\
0.8046875	1.109375	0.0707106781186548	0.0707106781186548\\
0.8046875	1.359375	0.0707106781186548	0.0707106781186548\\
0.8046875	1.609375	-0.0707106781186548	0.0707106781186548\\
0.8046875	1.859375	-0.0707106781186548	-0.0707106781186548\\
0.9296875	0.109375	0.0707106781186548	0.0707106781186548\\
0.9296875	0.359375	0.0707106781186548	0.0707106781186548\\
0.9296875	0.609375	0.0707106781186548	0.0707106781186548\\
0.9296875	0.859375	0.0707106781186548	0.0707106781186548\\
0.9296875	1.109375	0.0707106781186548	0.0707106781186548\\
0.9296875	1.359375	0.0707106781186548	0.0707106781186548\\
0.9296875	1.609375	-0.0707106781186548	0.0707106781186548\\
0.9296875	1.859375	-0.0707106781186548	0.0707106781186548\\
};
\end{axis}
\end{tikzpicture}
        \input{elast_box-5_deform.tikz}
        \caption{concentric, $\alpha = 10^{-5}$}\label{fig:elastic:4}
    \end{subfigure}
    \hfill
    \begin{subfigure}[t]{0.15\linewidth}
        \centering
        \begin{tikzpicture}[x=\linewidth,y=\linewidth]
\path[use as bounding box] (-.5,-.55) rectangle (.5,.5);
\begin{axis}[%
    width=0.66\linewidth,
    height=0.66\linewidth,
    at={(-.33\linewidth,-.33\linewidth)},
    scale only axis,
    axis on top,
    xmin=-128,
    xmax=128,
    ymin=-128,
    ymax=128,
    axis line style={draw=none},
    ticks=none
    ]
    \addplot  graphics [xmin=-128,xmax=128,ymin=-128,ymax=128] {wheel.png};
\end{axis}
\begin{polaraxis}[%
    width=0.66\linewidth,
    height=0.66\linewidth,
    at={(-.33\linewidth,-.33\linewidth)},
    ymin=0,
    ymax=256,
    scale only axis,
    xtick={45,135,225,315},
    ytick={128,256},
    xticklabels={$\frac{\pi}4$,$\frac{3\pi}{4}\!$,$\frac{-3\pi}4\!$,$\frac{-\pi}4$},
    yticklabels={$\sqrt{2}$,$\sqrt{8}$},
    yticklabel style = {font=\tiny}
    ]
\end{polaraxis}
\end{tikzpicture}
        \begin{tikzpicture}[x=\linewidth,y=\linewidth]
\path[use as bounding box] (0,0) -- (1,0) -- (1,2.1) -- (0,2.1) -- cycle;    

\begin{axis}[%
width=\linewidth,
height=2\linewidth,
scale only axis,
axis on top,
clip=false,
xmin=0,
xmax=1,
ymin=0,
ymax=2,
axis background/.style={fill=white},
ticks=none
]
\addplot [forget plot] graphics [xmin=0, xmax=1, ymin=0, ymax=2] {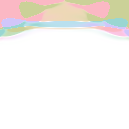};
\addplot[-Latex, 
color=black, 
point meta={sqrt((\thisrow{u})^2+(\thisrow{v})^2)}, 
point meta min=0, 
point meta max=0.1,
quiver={u=1.5*\thisrow{u}, v=1.5*\thisrow{v}, 
    every arrow/.append style={-{Latex[scale={1/1000*\pgfplotspointmetatransformed}]}}}]
 table[row sep=crcr] {%
x	y	u	v\\
0.0546875	1.359375	-0.000283327586604607	0.000995558151701052\\
0.0546875	1.609375	-0.0353553390593274	0.0353553390593274\\
0.0546875	1.859375	0.0353553390593274	0.0353553390593274\\
0.1796875	1.359375	0.000263743367003969	-0.000410714927635264\\
0.1796875	1.609375	-0.0353553390593274	-0.0263140159852521\\
0.1796875	1.859375	0.0353553390593274	-0.0353553390593274\\
0.3046875	1.609375	-0.0353553390593274	-0.0273054273281385\\
0.3046875	1.859375	0.0353553390593274	-0.0353553390593274\\
0.4296875	1.609375	-0.0353553390593274	-0.00560682635689849\\
0.4296875	1.859375	0.0353553390593274	-0.00167050569451843\\
0.5546875	1.609375	-0.0353553390593274	0.00406141696236012\\
0.5546875	1.859375	0.0353553390593274	0.00274745639862095\\
0.6796875	1.609375	-0.0353553390593274	0.0230661354414778\\
0.6796875	1.859375	0.0353553390593274	0.0353553390593274\\
0.8046875	1.359375	0.000175031022078319	0.000271420370308644\\
0.8046875	1.609375	-0.0353553390593274	0.00715909474315652\\
0.8046875	1.859375	0.0353553390593274	0.0353553390593274\\
0.9296875	1.359375	-0.000186682252276792	-0.000701918731637707\\
0.9296875	1.609375	-0.0353553390593274	-0.0353553390593274\\
0.9296875	1.859375	0.0353553390593274	-0.0353553390593274\\
};
\end{axis}
\end{tikzpicture}
        \input{elast_box_c2_deform.tikz}
        \caption{concentric, $\alpha = 10^{-5}$}\label{fig:elastic:5}
    \end{subfigure}
    \hfill
    \begin{subfigure}[t]{0.15\linewidth}
        \centering
        \begin{tikzpicture}[x=\linewidth,y=\linewidth]
\path[use as bounding box] (-.5,-.55) rectangle (.5,.5);
\begin{axis}[%
    width=0.66\linewidth,
    height=0.66\linewidth,
    at={(-.33\linewidth,-.33\linewidth)},
    scale only axis,
    axis on top,
    xmin=-128,
    xmax=128,
    ymin=-128,
    ymax=128,
    axis line style={draw=none},
    ticks=none
    ]
    \addplot  graphics [xmin=-128,xmax=128,ymin=-128,ymax=128] {wheel.png};
\end{axis}
\begin{polaraxis}[%
    width=0.66\linewidth,
    height=0.66\linewidth,
    at={(-.33\linewidth,-.33\linewidth)},
    ymin=0,
    ymax=256,
    scale only axis,
    xtick={45,135,225,315},
    ytick={128,256},
    xticklabels={$\frac{\pi}4$,$\frac{3\pi}{4}\!$,$\frac{-3\pi}4\!$,$\frac{-\pi}4$},
    yticklabels={$\sqrt{2}$,$\sqrt{8}$},
    yticklabel style = {font=\tiny}
    ]
\end{polaraxis}
\end{tikzpicture}
        \begin{tikzpicture}[x=\linewidth,y=\linewidth]
\path[use as bounding box] (0,0) -- (1,0) -- (1,2.1) -- (0,2.1) -- cycle;    

\begin{axis}[%
width=\linewidth,
height=2\linewidth,
scale only axis,
axis on top,
clip=false,
xmin=0,
xmax=1,
ymin=0,
ymax=2,
axis background/.style={fill=white},
ticks=none
]
\addplot [forget plot] graphics [xmin=0, xmax=1, ymin=0, ymax=2] {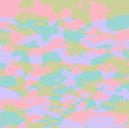};
\addplot[-Latex, 
color=black, 
point meta={sqrt((\thisrow{u})^2+(\thisrow{v})^2)}, 
point meta min=0, 
point meta max=0.1,
quiver={u=1.5*\thisrow{u}, v=1.5*\thisrow{v}, 
    every arrow/.append style={-{Latex[scale={1/1000*\pgfplotspointmetatransformed}]}}}]
 table[row sep=crcr] {%
x	y	u	v\\
0.0546875	0.109375	-0.0353553390593274	-0.0353553390593274\\
0.0546875	0.359375	0.0353553390593274	-0.0353553390593274\\
0.0546875	0.609375	-0.0353553390593274	0.0353553390593274\\
0.0546875	0.859375	0.0353553390593274	0.0353553390593274\\
0.0546875	1.109375	-0.0353553390593274	0.0353553390593274\\
0.0546875	1.359375	0.0353553390593274	-0.0353553390593274\\
0.0546875	1.609375	0.0353553390593274	0.0353553390593274\\
0.0546875	1.859375	0.0353553390593274	0.0353553390593274\\
0.1796875	0.109375	-0.0353553390593274	-0.0353553390593274\\
0.1796875	0.359375	0.0353553390593274	0.0353553390593274\\
0.1796875	0.609375	0.0353553390593274	-0.0353553390593274\\
0.1796875	0.859375	-0.0353553390593274	0.0353553390593274\\
0.1796875	1.109375	-0.0353553390593274	-0.0353553390593274\\
0.1796875	1.359375	-0.0353553390593274	0.0353553390593274\\
0.1796875	1.609375	-0.0353553390593274	-0.0353553390593274\\
0.1796875	1.859375	0.0353553390593274	0.0353553390593274\\
0.3046875	0.109375	-0.0353553390593274	-0.0353553390593274\\
0.3046875	0.359375	-0.0353553390593274	0.0353553390593274\\
0.3046875	0.609375	-0.0353553390593274	-0.0353553390593274\\
0.3046875	0.859375	0.0353553390593274	0.0353553390593274\\
0.3046875	1.109375	-0.0353553390593274	0.0353553390593274\\
0.3046875	1.359375	-0.0353553390593274	0.0353553390593274\\
0.3046875	1.609375	0.0353553390593274	-0.0353553390593274\\
0.3046875	1.859375	0.0353553390593274	0.0353553390593274\\
0.4296875	0.109375	0.0353553390593274	-0.0353553390593274\\
0.4296875	0.359375	0.0353553390593274	0.0353553390593274\\
0.4296875	0.609375	-0.0353553390593274	-0.00132554437269138\\
0.4296875	0.859375	-0.0353553390593274	-0.0353553390593274\\
0.4296875	1.109375	-0.0353553390593274	-0.0353553390593274\\
0.4296875	1.359375	0.0353553390593274	0.0353553390593274\\
0.4296875	1.609375	-0.0353553390593274	-0.0353553390593274\\
0.4296875	1.859375	0.0353553390593274	-0.0353553390593274\\
0.5546875	0.109375	-0.0353553390593274	0.0353553390593274\\
0.5546875	0.359375	0.0353553390593274	-0.0353553390593274\\
0.5546875	0.609375	-0.0353553390593274	-0.0353553390593274\\
0.5546875	0.859375	-0.0353553390593274	0.0353553390593274\\
0.5546875	1.109375	-0.0353553390593274	0.0353553390593274\\
0.5546875	1.359375	0.0353553390593274	-0.0353553390593274\\
0.5546875	1.609375	0.0353553390593274	0.0353553390593274\\
0.5546875	1.859375	0.0353553390593274	-0.0353553390593274\\
0.6796875	0.109375	0.0353553390593274	0.0353553390593274\\
0.6796875	0.359375	-0.0353553390593274	-0.0353553390593274\\
0.6796875	0.609375	0.0353553390593274	-0.0353553390593274\\
0.6796875	0.859375	-0.0353553390593274	-0.0353553390593274\\
0.6796875	1.109375	-0.0353553390593274	0.0353553390593274\\
0.6796875	1.359375	-0.0353553390593274	0.0353553390593274\\
0.6796875	1.609375	0.0353553390593274	-0.0353553390593274\\
0.6796875	1.859375	0.0353553390593274	-0.0353553390593274\\
0.8046875	0.109375	-0.0353553390593274	0.0353553390593274\\
0.8046875	0.359375	-0.0353553390593274	-0.0353553390593274\\
0.8046875	0.609375	-0.0353553390593274	0.0353553390593274\\
0.8046875	0.859375	-0.0353553390593274	0.0353553390593274\\
0.8046875	1.109375	-0.0353553390593274	-0.0353553390593274\\
0.8046875	1.359375	-0.0353553390593274	0.0353553390593274\\
0.8046875	1.609375	-0.0353553390593274	0.0353553390593274\\
0.8046875	1.859375	0.0353553390593274	0.0353553390593274\\
0.9296875	0.109375	0.0353553390593274	0.0353553390593274\\
0.9296875	0.359375	0.0353553390593274	-0.0353553390593274\\
0.9296875	0.609375	-0.0353553390593274	-0.0353553390593274\\
0.9296875	0.859375	-0.0353553390593274	-0.0353553390593274\\
0.9296875	1.109375	0.0353553390593274	0.0353553390593274\\
0.9296875	1.359375	0.0353553390593274	0.0353553390593274\\
0.9296875	1.609375	-0.0353553390593274	-0.0353553390593274\\
0.9296875	1.859375	0.0353553390593274	-0.0353553390593274\\
};
\end{axis}
\end{tikzpicture}
        \input{elast_box_c3_deform.tikz}
        \caption{concentric, $\alpha = 10^{-5}$}\label{fig:elastic:6}
    \end{subfigure}
    \caption{Control (top rows: phase and magnitude color coded as shown in color wheel with values in $\calM$ indicated, additionally indicated by arrows) and state (bottom row: target deformation in gray, achieved deformation in red) for the elasticity model problem}
    \label{fig:elasticExamples}
\end{figure}

\Cref{fig:elastic:5} shows that the control is not guaranteed to take values in $\calM$; here, the target displacement $z$ is the displacement induced by a deadload to the left applied at the top domain boundary.
Since the target was induced by a forcing with zero load throughout the bulk material, the optimal control mainly takes the nonpreferred value of zero.
However, a slight random perturbation of $z$ again leads to a pure multibang control, as shown in \cref{fig:elastic:6}.

We again show the convergence behavior for the example in \cref{fig:elastic:3} in \cref{tab:iterElastic}. Since this example is linear, only a few Newton iterations ($2$ to $6$) are required for all values of $\gamma$, and correspondingly only a few line searches are carried out for $\gamma<10^{-5}$. As before, the multibang structure is already strongly promoted for $\gamma\approx 10^{-6}$. (Let us point out that the elastic body is fixed at the bottom boundary so that the control has to be $0$ there, which for this example does not lie in $\calM$.)
\begin{table}[b]
    \caption{Convergence behavior for the example in \cref{fig:elastic:3}: number of semi-smooth Newton steps, number of times a line search was required, and number of nodes with $u_\gamma(x)\notin\calM$}
    \label{tab:iterElastic}
    \centering
    \resizebox{\linewidth}{!}{%
        \begin{tabular}{lrrrrrrrrrrrrr}
            \toprule
            $\gamma$       & \num[round-precision = 0]{1.953e-1} & \num[round-precision = 0]{1.221e-2} & \num[round-precision = 0]{1.526e-3} & \num[round-precision = 0]{1.907e-4} & \num[round-precision = 0]{1.192e-5} & \num[round-precision = 0]{1.490e-6} & \num[round-precision = 0]{1.863e-7} & \num[round-precision = 0]{1.164e-8} & \num[round-precision = 0]{1.445e-9} & \num[round-precision = 0]{1.819e-10}\\
            \midrule
            \# SSN         & \num{2}    & \num{4}    & \num{5}    & \num{5}    & \num{4}   & \num{6}  & \num{4}  & \num{4}  & \num{5}  & \num{6} \\
            \# line search & \num{0}    & \num{0}    & \num{0}    & \num{0}    & \num{0}   & \num{2}  & \num{1}  & \num{2}  & \num{3}  & \num{4}\\
            \# not MB      & \num{4225} & \num{4210} & \num{3747} & \num{1245} & \num{179} & \num{84} & \num{71} & \num{68} & \num{68} & \num{68}  \\
            \bottomrule	
        \end{tabular}
    }
\end{table}

\subsection{Multimaterial branched transport}

To illustrate the behavior of our approach for multimaterial branched transport, we fix a random network
obtained by a random perturbation of vertices of a regular $10\times 10$ square grid and then performing a Delaunay triangulation (subsequently removing very long edges).
In our experiments we fix the vertices that will act as material sources or sinks
and assign them different materials, resulting in different optimal transportation schemes; see \cref{fig:btransport}.
The amount of each material in our example calculations is simply taken as $m_i=1$ for all $i$.
In contrast to the previous examples, we here pick $c(v)=|v|_2$ rather than the squared norm,
which leads to a preference for combined flows of multiple materials. We fix the control costs at $\alpha=10^{-3}$.

For the numerical solution, we start with a zero flux and a Moreau--Yosida parameter $\gamma_0=20$. The semismooth Newton systems are again solved iteratively using GMRES without restarts at a tolerance of $10^{-11}$; we include a backtracking line search with a minimal step size $\tau_{\min}=10^{-5}$. Starting with a reduction factor $q=0.5$, we adapt $\gamma$ and $q$ as follows: If the Newton method for $\gamma_n$ did not converge within $20$ iterations or the minimal step size did not lead to a reduction of the residual norm, we discard the iterate, set $q=q^{0.25}$, and restart with $u^{n-1}$ and $\gamma_{n-1}q$. If the Newton iteration converged (with a residual norm smaller than $\min\{\gamma_n,10^{-6}\}$ or a relative residual norm smaller than $10^{-9}$, whichever occurs first) within $15$ steps, we accept the iterate, set $q = \min\{q^{0.75},1-10^{-4}\}$, and compute $u^{n+1}$ with $u^n$ as starting value and $\gamma_{n+1} = \gamma_n q$. If the Newton iteration even converged within $5$ steps, we continue similarly but with reduced $q = \min\{1-10^{-3},\max\{q^{1.25},0.5\}\}$. Otherwise we continue with $q = \min\{1-10^{-3},q\}$. We terminate the path-following at $\gamma<10^{-7}$. Again, this is a heuristic procedure that worked well for this example; in all reported cases, the deviations from the desired discrete control values are less than $0.006$ on each edge.

The results are shown in \cref{fig:btransport} for different configurations of sources and sinks.
For three sources at the bottom and three sinks in reversed order at the top, all mass flows converge along the optimal transportation path (\cref{fig:btransport:a}).
If the flow of $m_3$ is reversed by swapping its source and sink, there is no longer a payoff by joint transport so that $m_3$ is transported independently of the other materials (\cref{fig:btransport:b}).
If instead the order of the sinks is reversed it becomes more economic for $m_3$ to take the direct route than to create a flow with all materials (\cref{fig:btransport:c}).
Finally, the network with an additional fourth material becomes more complicated (\cref{fig:btransport:d}).

\begin{figure}
    \centering
    \setlength\unitlength{.24\linewidth}
    \begin{subfigure}[t]{0.24\linewidth}
        \begin{picture}(1,1)
            \put(0,0){\includegraphics[width=\unitlength]{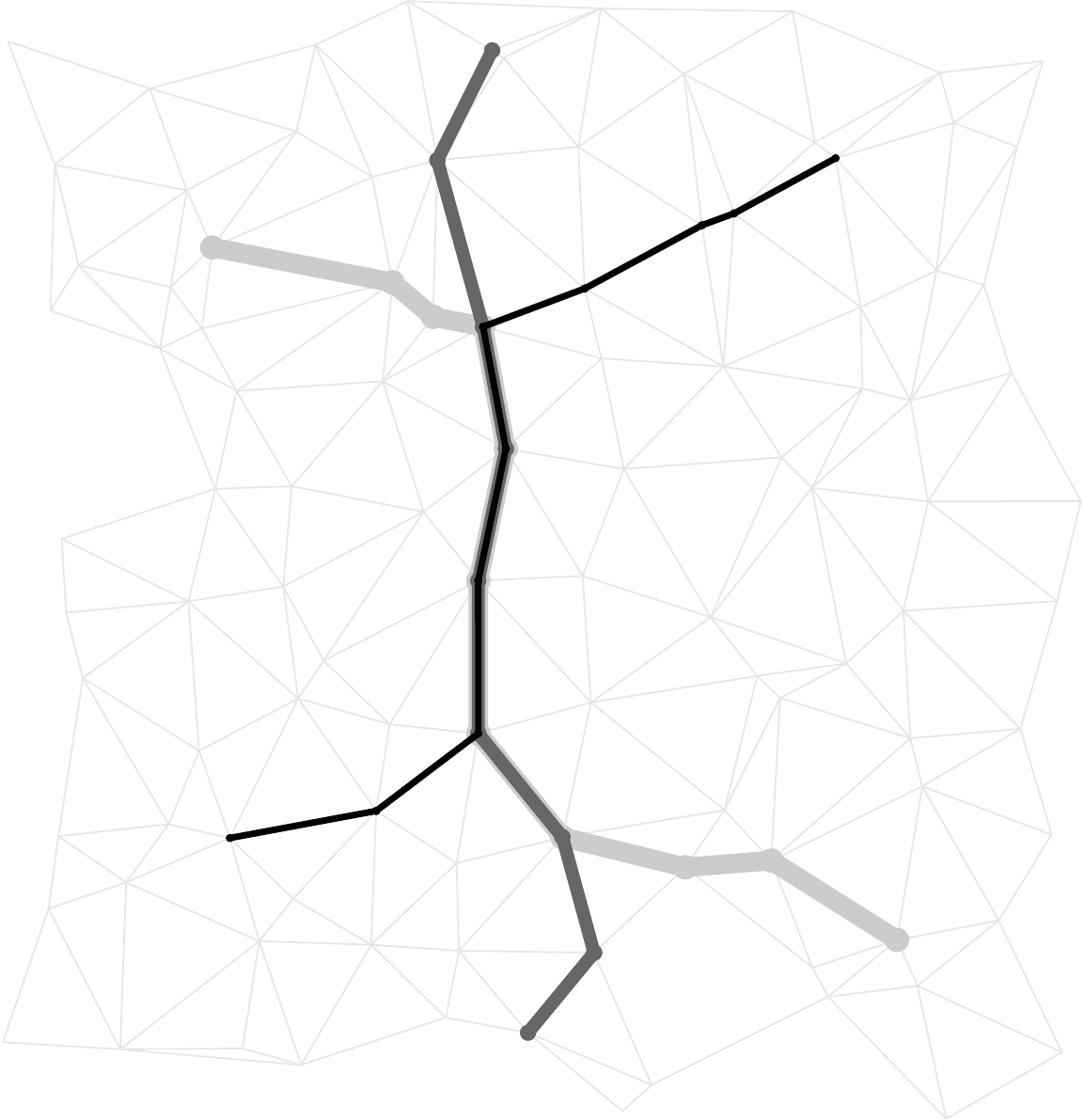}}
            \put(.0,.2){\small$+m_1$}
            \put(.4,.0){\small$+m_2$}
            \put(.75,.1){\small$+m_3$}
            \put(.65,.9){\small$-m_1$}
            \put(.42,.98){\small$-m_2$}
            \put(.0,.85){\small$-m_3$}
        \end{picture}%
        \caption{}\label{fig:btransport:a}
    \end{subfigure}
    \hfill
    \begin{subfigure}[t]{0.24\linewidth}
        \begin{picture}(1,1)
            \put(0,0){\includegraphics[width=\unitlength]{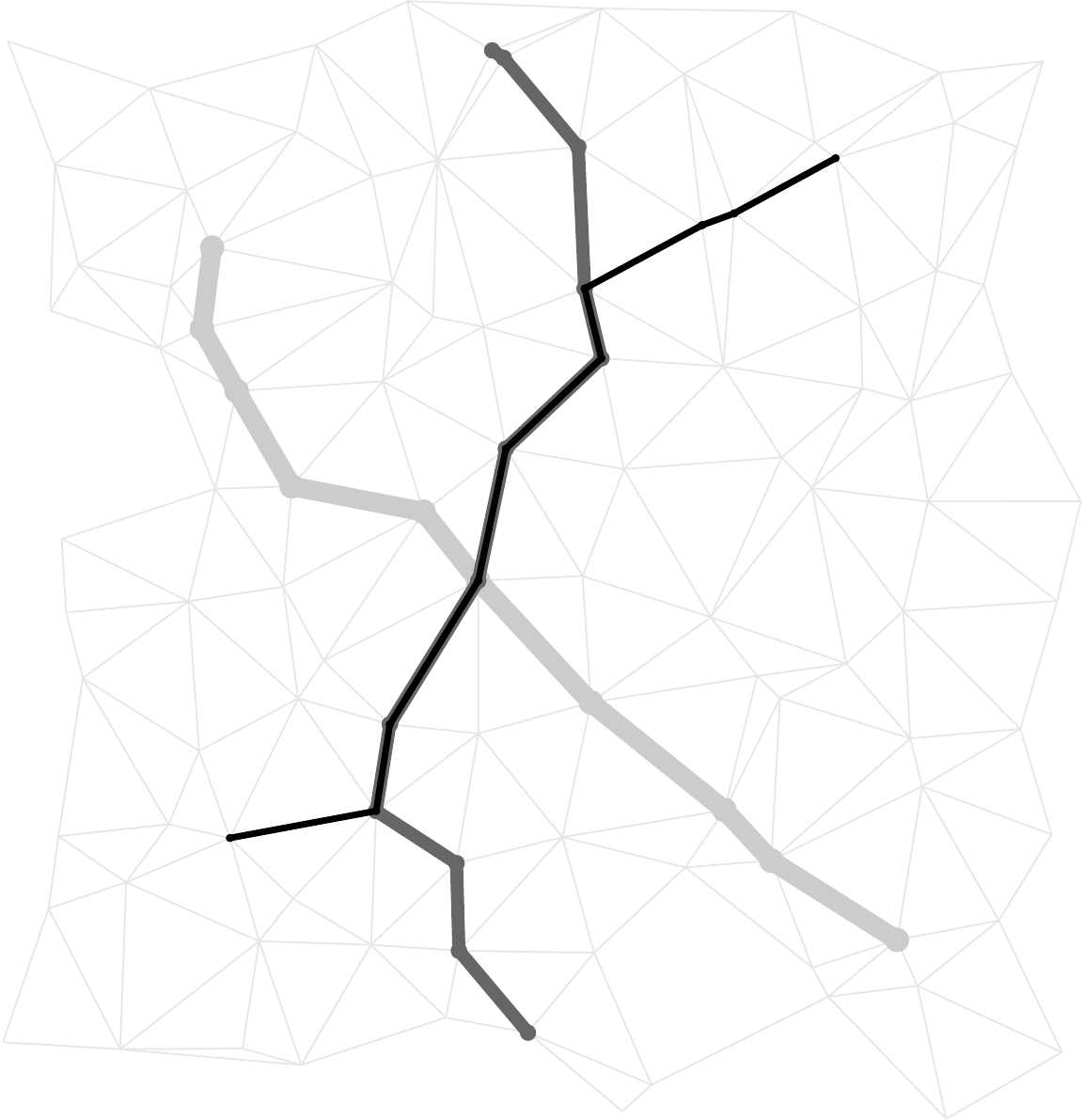}}
            \put(.0,.2){\small$+m_1$}
            \put(.4,.0){\small$+m_2$}
            \put(.75,.1){\small$-m_3$}
            \put(.65,.9){\small$-m_1$}
            \put(.42,.98){\small$-m_2$}
            \put(.0,.85){\small$+m_3$}
        \end{picture}%
        \caption{}\label{fig:btransport:b}
    \end{subfigure}
    \hfill
    \begin{subfigure}[t]{0.24\linewidth}
        \begin{picture}(1,1)
            \put(0,0){\includegraphics[width=\unitlength]{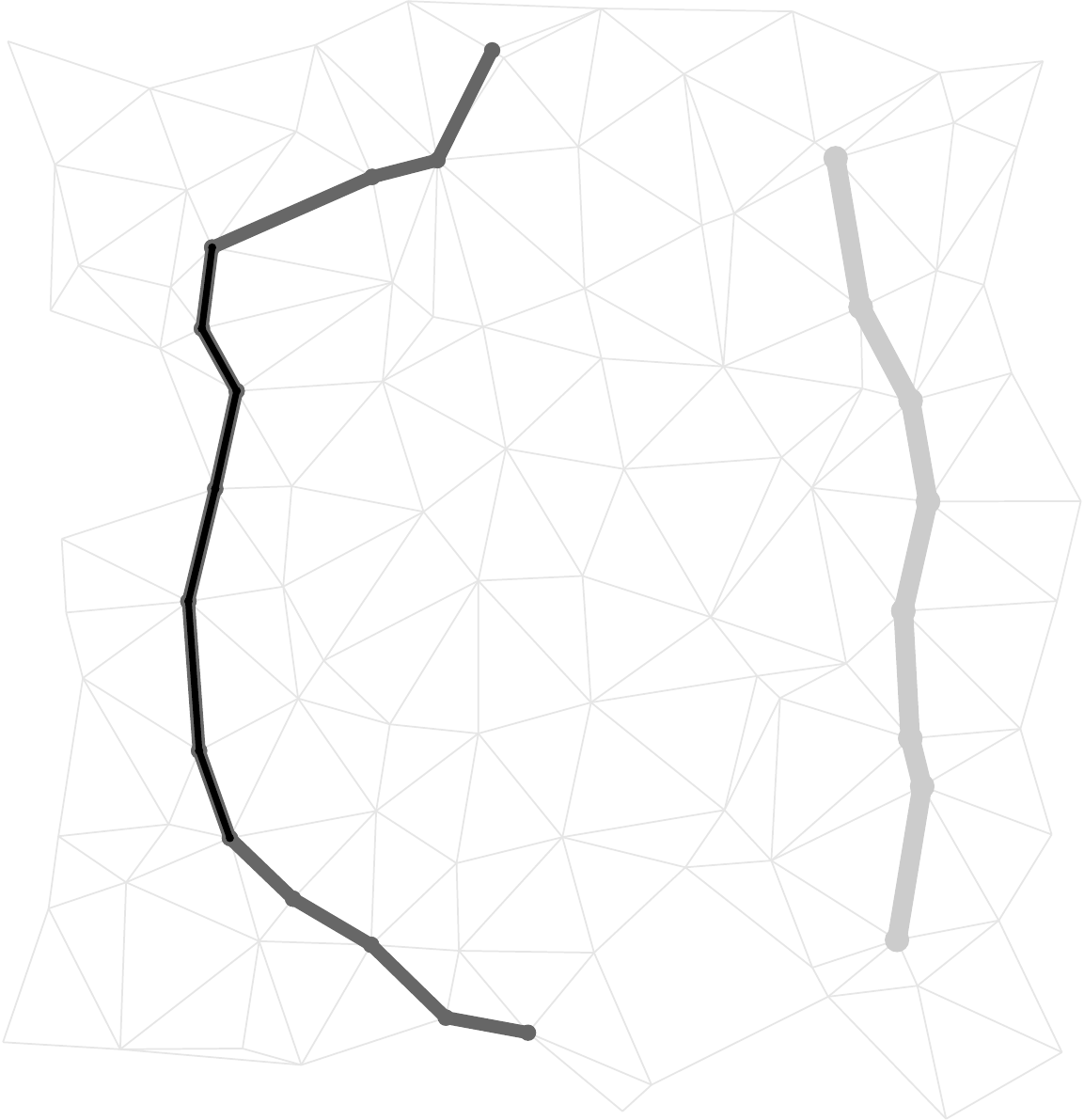}}
            \put(.0,.2){\small$+m_1$}
            \put(.4,.0){\small$+m_2$}
            \put(.75,.1){\small$+m_3$}
            \put(.65,.9){\small$-m_3$}
            \put(.42,.98){\small$-m_2$}
            \put(.0,.85){\small$-m_1$}
        \end{picture}%
        \caption{}\label{fig:btransport:c}
    \end{subfigure}
    \hfill
    \begin{subfigure}[t]{0.24\linewidth}
        \begin{picture}(1,1)
            \put(0,0){\includegraphics[width=\unitlength]{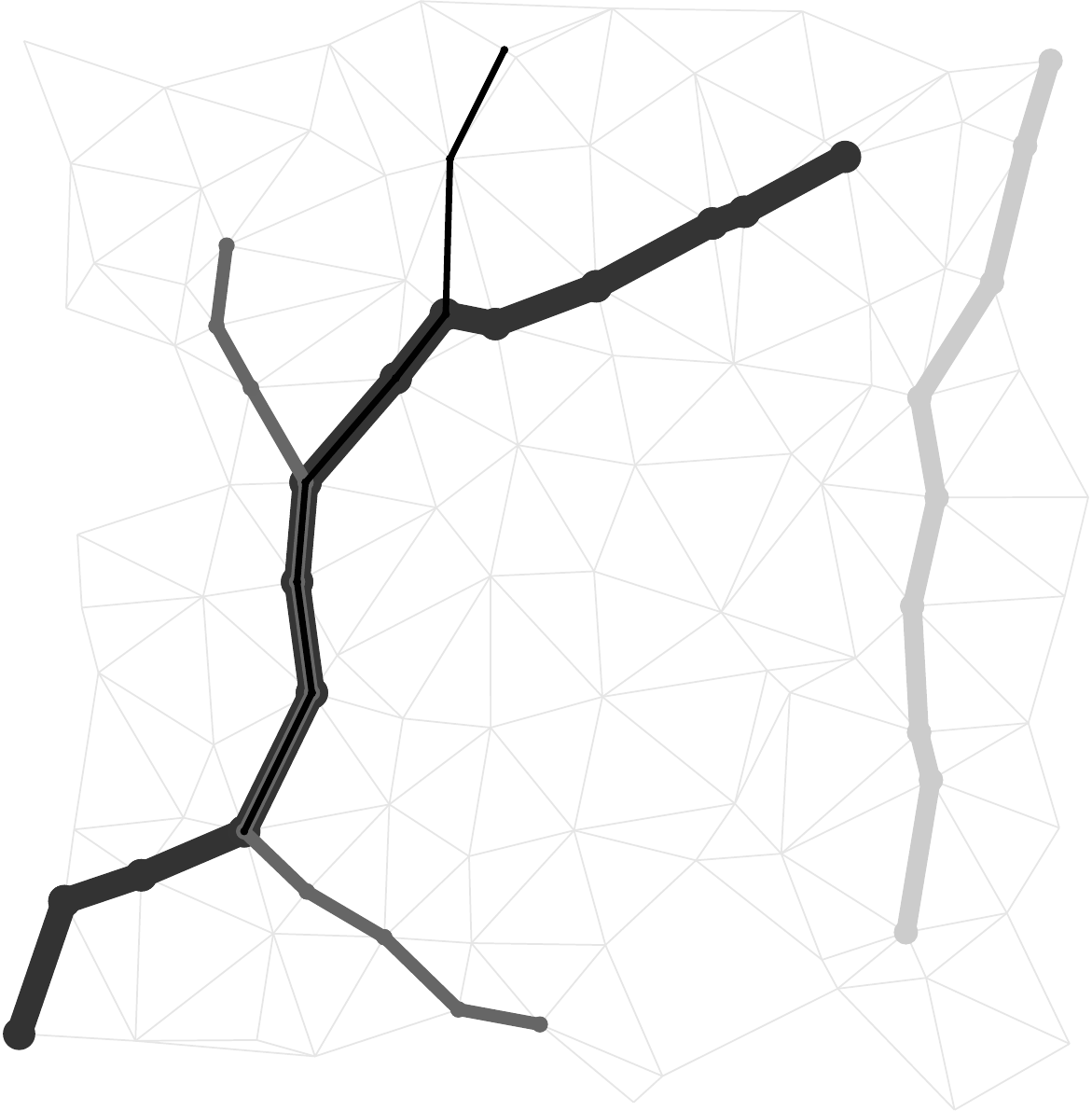}}
            \put(.25,.23){\small$+m_1$}
            \put(.4,.0){\small$+m_2$}
            \put(.75,.1){\small$+m_3$}
            \put(.0,.0){\small$+m_4$}
            \put(.65,.9){\small$-m_4$}
            \put(.42,.98){\small$-m_1$}
            \put(.0,.85){\small$-m_2$}
            \put(.85,.98){\small$-m_3$}
        \end{picture}%
        \caption{}\label{fig:btransport:d}
    \end{subfigure}
    \caption{Optimal material flows on a fixed random network with fixed locations but different permutations of sources ($+m_i$) and sinks ($-m_i$); flows corresponding to different materials are indicated by lines of different widths and gray values.}
    \label{fig:btransport}
\end{figure}

\section{Conclusion}

A preference for a small number of predefined discrete control values can be achieved by a piecewise affine pointwise regularization term whose corners lie at the preferred values.
In contrast to the case of scalar controls treated in \cite{CK:2013,CK:2015,CKK:2017,CKT:2019}, the case of vector-valued controls allows giving multiple control values equal preference,
and numerical experiments for optimal control of the Bloch equation, optimal control of elastic deformation, and multimaterial branched transport show that this feature is indeed exhibited by the optimal control in a broad range of practically relevant scenarios.
Furthermore, the optimal control problems leading to admissible controls turn out to be dense among a family of control problems.
A more precise characterization of control problems with admissible solutions would be desirable and should be further investigated.
For instance, for certain control problems such as the elasticity-based example, one might conjecture that targets leading to nonmultibang controls are nowhere dense.
In the context of nonsmooth optimization, investigating rigorous globalization of the semismooth Newton method by the path-following approach followed for the branched transport example would be worthwhile. Finally, an interesting topic for follow-up work would be combining the vector-valued results presented in this work with the techniques from \cite{CKK:2017} for topology optimization of elastic composite materials.

\appendix

\section{Properties of the Bloch equation}\label{sec:BlochProperties}
Here we verify that the state operator \eqref{eq:settingBloch} satisfies the required assumptions \ref{enm:weakWeakContinuity}--\ref{enm:adjointConvergence}.
In the following, a subscript to $\mathbf M^{(\omega)}$ and $\mathbf B^{(\omega)}$ always refers to the chosen control $u$.

\begin{proposition}\label{thm:operatorProperties}
    The operator $S$ as defined in \eqref{eq:settingBloch} is well-defined and satisfies {\ref{enm:weakWeakContinuity}--\ref{enm:compactness}.}
\end{proposition}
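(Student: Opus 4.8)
The plan is to establish the four properties by working with the Bloch ODE directly and exploiting the fact that, for fixed offset frequency $\omega$, the magnetization $\mathbf M^{(\omega)}$ evolves under a linear (in $\mathbf M$) but $u$-dependent skew-symmetric system $\tfrac{\dd}{\dd t}\mathbf M^{(\omega)}(t) = B_u^\omega(t)\mathbf M^{(\omega)}(t)$, where $B_u^\omega(t)$ is the cross-product matrix associated to $\mathbf B^{(\omega)}(t) = (u_1(t),u_2(t),\omega)$. First I would record the structural consequences of skew-symmetry: the Euclidean norm $|\mathbf M^{(\omega)}(t)|_2$ is conserved, so $|\mathbf M^{(\omega)}(t)|_2 = |\mathbf M_0|_2 = 1$ for all $t$, which gives a uniform a priori bound independent of $u$. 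Existence, uniqueness, and global solvability then follow from Carathéodory theory, since $B_u^\omega(\cdot) \in L^2(0,T)$ whenever $u \in L^2((0,T);\R^2)$ and the right-hand side is linear in $\mathbf M$; hence $S$ is well-defined as a map into $(\R^3)^J$.

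Next I would address \ref{enm:weakWeakContinuity}. Since the target space $(\R^3)^J$ is finite-dimensional, weak convergence there is just convergence, so it suffices to show that $u_i \rightharpoonup u$ in $L^2((0,T);\R^2)$ implies $\mathbf M^{(\omega)}_{u_i}(T) \to \mathbf M^{(\omega)}_{u}(T)$. The key step is to pass to the integral (Volterra) form $\mathbf M^{(\omega)}_{u}(t) = \mathbf M_0 + \int_0^t B_u^\omega(s)\mathbf M^{(\omega)}_u(s)\,\dd s$, bound $\{\mathbf M^{(\omega)}_{u_i}\}$ uniformly in $W^{1,1}$ hence in $C([0,T];\R^3)$ (using the norm conservation for the uniform sup bound and $\|B_{u_i}^\omega\|_{L^2}$ boundedness for the derivative bound), extract a uniformly convergent subsequence via Arzelà--Ascoli, and identify the limit: in the term $\int_0^t B_{u_i}^\omega(s)\mathbf M^{(\omega)}_{u_i}(s)\,\dd s$ one writes $B_{u_i}^\omega \mathbf M_{u_i} = B_{u_i}^\omega \mathbf M + B_{u_i}^\omega(\mathbf M_{u_i} - \mathbf M)$; the first summand converges because $B_{u_i}^\omega \rightharpoonup B_u^\omega$ in $L^2$ tested against the fixed $L^2$ function $\mathbf M \chi_{[0,t]}$, and the second is controlled by $\|B_{u_i}^\omega\|_{L^2}\|\mathbf M_{u_i} - \mathbf M\|_{L^2} \to 0$. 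A subsequence-of-every-subsequence argument upgrades this to convergence of the full sequence, giving \ref{enm:weakWeakContinuity}. Property \ref{enm:compactness} is then almost immediate: a weakly convergent sequence in $U$ is mapped, by \ref{enm:weakWeakContinuity} and finite-dimensionality of $Y$, to a convergent (hence strongly convergent) sequence in $Y$, so $S$ maps weakly convergent sequences to strongly convergent ones, which is precisely compactness for an operator on a Hilbert space.

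For \ref{enm:FrechetDifferentiability} I would linearize: the candidate derivative $S'(u)\varphi = (\delta\mathbf M^{(\omega_1)}_\varphi(T),\dots,\delta\mathbf M^{(\omega_J)}_\varphi(T))$ where $\delta\mathbf M^{(\omega)}_\varphi$ solves the linearized equation $\tfrac{\dd}{\dd t}\delta\mathbf M^{(\omega)}_\varphi = B_u^\omega \delta\mathbf M^{(\omega)}_\varphi + B_\varphi^0 \mathbf M^{(\omega)}_u$, $\delta\mathbf M^{(\omega)}_\varphi(0) = 0$ (this is equation \eqref{eq:BlochFrechetEquation} referenced later in the paper). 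One checks boundedness and linearity of $\varphi \mapsto S'(u)\varphi$ via Grönwall, then estimates the remainder $\mathbf M^{(\omega)}_{u+\varphi} - \mathbf M^{(\omega)}_u - \delta\mathbf M^{(\omega)}_\varphi$: it satisfies a linear ODE with the same homogeneous part $B_u^\omega$ and an inhomogeneity that is quadratically small in $\|\varphi\|_{L^2}$ (products of $B_\varphi^0$ with $\mathbf M_{u+\varphi}-\mathbf M_u$, the latter being $O(\|\varphi\|_{L^2})$ by a first Grönwall estimate), so another Grönwall bound yields $o(\|\varphi\|_{L^2})$ uniformly. The main obstacle throughout is bookkeeping the $L^2$-versus-$L^\infty$ norms carefully in the Grönwall arguments, since $B_u^\omega$ is only $L^2$ in time — one must use the $L^1$ or $L^2$ forms of Grönwall's inequality rather than the $L^\infty$ version — but this is a standard if slightly delicate issue; the conceptual content is entirely in the norm-conservation bound and the weak-continuity passage to the limit described above.
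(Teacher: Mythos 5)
Your proposal is correct, and for well-definedness, Fr\'echet differentiability, and compactness it coincides with the paper's argument (same skew-symmetry/norm-conservation observation, same linearized equation \eqref{eq:BlochFrechetEquation}, same Gronwall treatment of the remainder with the equivalent decomposition $B^0_\varphi(\mathbf M_{u+\varphi}-\mathbf M_u)$ in place of the paper's $B^0_\varphi\,\delta\mathbf M_\varphi$, and the same finite-dimensionality argument for \ref{enm:compactness}). Where you genuinely diverge is \ref{enm:weakWeakContinuity}. The paper works quantitatively: it writes the equation for the difference $\Delta M_i=\mathbf M^{(\omega)}_{u_i}-\mathbf M^{(\omega)}_u$, applies Gronwall to obtain the explicit bound \eqref{eq:Gronwall} in terms of $f_i(r)=\int_0^r\Delta B_i(s)\,\dd s$, and then shows $f_i\to0$ in $L^2$ by expanding $\norm{f_i}_{L^2}^2$ as a double integral over a simplex and using weak convergence of $(s_1,s_2)\mapsto\Delta B_i(s_1)\cdot\Delta B_i(s_2)$. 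You instead use a soft compactness--identification--uniqueness scheme: Arzel\`a--Ascoli on the trajectories, passage to the limit in the Volterra form via the splitting $B^\omega_{u_i}\mathbf M_{u_i}=B^\omega_{u_i}\tilde{\mathbf M}+B^\omega_{u_i}(\mathbf M_{u_i}-\tilde{\mathbf M})$, and a subsequence-of-subsequences step. Both are valid; the paper's version has the advantage that the same estimate \eqref{eq:Gronwall} is recycled later (in \cref{prop:blochAdjointContinuity}) to get convergence of $\mathbf M^{(\omega)}_{u_i}$ uniformly in $t$, whereas your version delivers that uniform convergence for free from Arzel\`a--Ascoli but no rate. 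Two small points to tighten: equicontinuity does not follow from a uniform $W^{1,1}$ bound alone --- you should invoke the uniform $L^2$ bound on $\dd\mathbf M_{u_i}/\dd t$ (via $|\mathbf M_{u_i}|_2=1$ and boundedness of $\norm{B^\omega_{u_i}}_{L^2}$) to get H\"older-$\tfrac12$ equicontinuity by Cauchy--Schwarz; and the identification step must be carried out with the Arzel\`a--Ascoli limit $\tilde{\mathbf M}$ and closed by the uniqueness of solutions (itself a Gronwall consequence, which the paper records explicitly) to conclude $\tilde{\mathbf M}=\mathbf M_u$.
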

\begin{proof}
    Introducing the skew-symmetric matrix
    \begin{equation*}
        B_u^\omega(t)=\begin{pmatrix}0&\omega&-(u(t))_2\\-\omega&0&(u(t))_1\\(u(t))_2&-(u(t))_1&0\end{pmatrix},
    \end{equation*}
    the homogeneous linear Bloch equation $\frac{\dd}{\dd t}{\mathbf{M}}^{(\omega)}_u(t) = B_u^\omega(t)\mathbf{M}^{(\omega)}_u(t)$ for a control $u(t)\in\R^2$ has a solution $\mathbf{M}^{(\omega)}_u(t)$ by  Carath\'eodory's existence theorem.
    Furthermore,
    \begin{equation*}
        \frac{\dd}{\dd t}|\mathbf{M}^{(\omega)}_u(t)|_2^2=2\mathbf{M}^{(\omega)}_u(t)\cdot\frac{\dd}{\dd t}{\mathbf{M}}^{(\omega)}_u(t)=0,
    \end{equation*}
    and thus $|\mathbf{M}^{(\omega)}_u(t)|_2=1$ for all $t$.
    Now let $u_i\rightharpoonup u$ weakly in $L^2(\Omega;\R^2)$. Then
    \begin{equation*}
        \left\{\begin{aligned}
                \frac{\dd}{\dd t}{\mathbf{M}}^{(\omega)}_{u_i}(t)-\frac{\dd}{\dd t}{\mathbf{M}}^{(\omega)}_{u}(t)
                &=B_{u_i}^\omega(t)\left(\mathbf{M}^{(\omega)}_{u_i}(t)-\mathbf{M}^{(\omega)}_{u}(t)\right)
                +(B_{u_i}^\omega(t)-B_u^\omega(t))\mathbf{M}^{(\omega)}_{u}(t), \quad t\in[0,T],\\
                \mathbf{M}^{(\omega)}_{u_i}(0) &= \mathbf{M}^{(\omega)}_{u}(0).
        \end{aligned}\right.
    \end{equation*}
    Upon abbreviating $\Delta M_i=\mathbf{M}^{(\omega)}_{u_i}-\mathbf{M}^{(\omega)}_{u}$ and $\Delta B_i=(B_{u_i}^\omega-B_u^\omega)\mathbf{M}^{(\omega)}_{u}$ and integrating from $0$ to $t$, we arrive at
    \begin{equation*}
        \begin{aligned}[t]
            |\Delta M_i(t)|_2
            &=\left|\int_0^tB_{u_i}^\omega(s)\Delta M_i(s)\,\dd s+\int_0^t\Delta B_i(s)\,\dd s\,\right|_2\\
            &\leq\int_0^t|B_{u_i}^\omega(s)|_2|\Delta M_i(s)|_2\,\dd s+\left|\int_0^t\Delta B_i(s)\,\dd s\, \right|_2.
        \end{aligned}
    \end{equation*}
    Gronwall's inequality now implies that
    \begin{equation}\label{eq:Gronwall}
        \left|\Delta M_i(t)\right|_2
        \leq\left|\int_0^t\Delta B_i(s)\,\dd s\,\right|_2
        +\int_0^t\left|\int_0^r\Delta B_i(s)\,\dd s\,\right|_2|B_{u_i}^\omega(r)|_2\exp\left(\int_r^t|B_{u_i}^\omega(s)|_2\,\dd s\right)\,\dd r.
    \end{equation}
    The first term converges to zero due to $\Delta B_i\rightharpoonup0$ in $L^2(\Omega;\R^{3})$ (since $\mathbf{M}^{(\omega)}_{u}\in L^\infty(\Omega;\R^3)$).
    Additionally, the exponential is bounded by $\exp(\sqrt T\|B_{u_i}^\omega\|_{L^2(\Omega;\R^{3\times3})})\leq C\in\R$ independent of $i$.
    Thus, the right-hand side converges to zero if
    \begin{equation*}
        f_i\to0\text{ in }L^2(\Omega;\R)
        \qquad\text{for}\qquad
        f_i:\Omega\to\R,\quad r\mapsto\int_0^r\Delta B_i(s)\,\dd s.
    \end{equation*}
    This is indeed the case since
    \begin{equation*}
        \|f_i\|_{L^2(\Omega)}^2=\int_{\{s\in(0,T)^3:s_1,s_2\leq s_3\}}\Delta B_i(s_1)\cdot\Delta B_i(s_2)\,\dd s
    \end{equation*}
    and $s\mapsto \Delta B_i(s_1)\cdot\Delta B_i(s_2)$ converges weakly to zero in $L^2((0,T)^3;\R)$.
    Thus $\mathbf M^{(\omega_j)}_{u_i}(T)$ converges for all $j$, and therefore $S(u_i)\to S(u)$.
    This argument also implies uniqueness of the solution.

    Moreover, $S$ is Fr\'echet differentiable, and its derivative at $u\in L^2(\Omega;\R^2)$ is given by
    \begin{equation*}
        S'(u):U\to Y,\quad
        \phi\mapsto \delta\mathbf M_\phi(T)=(\delta\mathbf M_\phi^{(\omega_1)}(T),\ldots,\delta\mathbf M_\phi^{(\omega_J)}(T))
        \quad
    \end{equation*}
    with $\delta\mathbf M_\phi^{(\omega)}$ solving the linearized state equation (note $\partial_u(B_u^\omega)(\phi)=B_\phi^0$)
    \begin{equation}\label{eq:BlochFrechetEquation}
        \left\{\begin{aligned}
                \tfrac{\dd}{\dd t} \delta\mathbf M_\phi^{(\omega)}(t)&=B_{u}^{\omega}(t)\delta\mathbf M_\phi^{(\omega)}(t)+B_\phi^0(t)\mathbf M^{(\omega)}_{u}(t)\,,\qquad t\in[0,T],\\
                \delta\mathbf M_\phi^{(\omega)}(0)&=(0,0,0)^T.
        \end{aligned}\right.
    \end{equation}
    Indeed, $\delta\mathbf M_\phi(T)$ is obviously linear in $\phi$, and the unique solvability follows just like for $\mathbf M_u^{(\omega)}$.
    Furthermore, for any $\tilde u\in U$ with $\|\tilde u-u\|_{U}\leq1$ and $\phi=\tilde u-u$ we have
    \begin{equation*}
        \tfrac\dd{\dd t}(\mathbf{M}^{(\omega)}_{\tilde u}-\mathbf{M}^{(\omega)}_{u}-\delta\mathbf M_\phi^{(\omega)})
        =B_{\tilde u}^{\omega}(\mathbf{M}^{(\omega)}_{\tilde u}-\mathbf{M}^{(\omega)}_{u}-\delta\mathbf M_\phi^{(\omega)})
        +(B_{\tilde u}^{\omega}-B_{u}^{\omega})\delta\mathbf M_\phi^{(\omega)}
    \end{equation*}
    with zero initial condition.
    Gronwall estimates analogous to \eqref{eq:Gronwall} (now for $\delta\mathbf M_\phi^{(\omega)}$ and $\mathbf{M}^{(\omega)}_{\tilde u}-\mathbf{M}^{(\omega)}_{u}-\delta\mathbf M_\phi^{(\omega)}$, exploiting that $|B_{\tilde u}^\omega(r)|_2\exp(\int_r^t|B_{\tilde u}^\omega(s)|_2\,\dd s)$ is bounded by a constant only depending on $\|u\|_{U}$) imply that
    \begin{equation*}
        |\delta\mathbf M_\phi^{(\omega)}(t)|_2\leq\tilde C\|B_{\tilde u}^{\omega}-B_{u}^{\omega}\|_{L^2(\Omega;\R^{3\times 3})}\leq2\tilde C\|\tilde u-u\|_{U}
    \end{equation*}
    for a constant $\tilde C>0$ and all $t\in\Omega$ as well as
    \begin{equation*}
        \begin{aligned}[t]
            |\mathbf{M}^{(\omega)}_{\tilde u}(T)-\mathbf{M}^{(\omega)}_{u}(T)-\delta\mathbf M_\phi^{(\omega)}(T)|_2
            &\leq C\sup_{t\in\Omega}\left|\int_0^t(B_{\tilde u}^{\omega}(s)-B_{u}^{\omega}(s))\delta\mathbf M_\phi^{(\omega)}(s)\,\dd s\,\right|_2\\
            &\leq C\|B_{\tilde u}^{\omega}-B_{u}^{\omega}\|_{L^1(\Omega;\R^{3\times 3})}\|\delta\mathbf M_\phi^{(\omega)}\|_{L^\infty(\Omega;\R^3)}\\
            &\leq C\|\tilde u-u\|_{U}^2,
        \end{aligned}
    \end{equation*}
    where $C$ denotes a positive constant (not necessarily the same in all inequalities).
    We thus have
    \begin{equation*}
        |S(\tilde u)-S(u)-S'(u)(\tilde u-u)|_2\leq C\|\tilde u-u\|_{U}^2
    \end{equation*}
    as required.
    The compactness follows from the finite dimensionality of $\ran S$.
\end{proof}

We will also require some regularity results for the adjoint operator $S'(u)^*$.

\begin{proposition}\label{prop:bloch_adjoint}
    For $S$ from \eqref{eq:settingBloch} and $u\in U$ we have
    \begin{align*}
        &S'(u)^*:Y\to U,\\
        &(S'(u)^*y)(t)=\sum_{j=1}^J\left(\begin{smallmatrix}0&\left(\mathbf M_u^{(\omega_j)}(t)\right)_3&-\left(\mathbf M_u^{(\omega_j)}(t)\right)_2\\-\left(\mathbf M_u^{(\omega_j)}(t)\right)_3&0&\left(\mathbf M_u^{(\omega_j)}(t)\right)_1\end{smallmatrix}\right)\mathbf{\Psi}_{u,j}(t),
    \end{align*}
    where $\mathbf{\Psi}_{u,j}$ solves the adjoint equation
    \begin{equation*}
        \frac{\dd}{\dd t}\mathbf{\Psi}_{u,j}(t)=\mathbf{\Psi}_{u,j}(t)\times\mathbf B_u^{(\omega_j)}(t),\quad
        \mathbf{\Psi}_{u,j}(T)=y_j,\quad
        j=1,\ldots,J.
    \end{equation*}
\end{proposition}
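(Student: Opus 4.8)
The plan is to read off $S'(u)^*$ from its defining identity $\langle S'(u)\phi,y\rangle_Y=\langle\phi,S'(u)^*y\rangle_U$, valid for all $\phi\in U$ and $y\in Y$, using the explicit form of $S'(u)$ established in \cref{thm:operatorProperties}. By that result, $S'(u)\phi=(\delta\mathbf M_\phi^{(\omega_1)}(T),\ldots,\delta\mathbf M_\phi^{(\omega_J)}(T))$ with $\delta\mathbf M_\phi^{(\omega)}$ solving the linearized equation \eqref{eq:BlochFrechetEquation}, so
\begin{equation*}
\langle S'(u)\phi,y\rangle_Y=\sum_{j=1}^J\delta\mathbf M_\phi^{(\omega_j)}(T)\cdot y_j.
\end{equation*}
First I would establish that the adjoint equation $\tfrac{\dd}{\dd t}\mathbf\Psi_{u,j}=\mathbf\Psi_{u,j}\times\mathbf B_u^{(\omega_j)}=B_u^{\omega_j}\mathbf\Psi_{u,j}$ with terminal datum $\mathbf\Psi_{u,j}(T)=y_j$ admits a unique absolutely continuous solution: the rewriting $\mathbf v\times\mathbf B_u^{(\omega)}=B_u^{\omega}\mathbf v$ is immediate from the definitions of $B_u^\omega$ and $\mathbf B_u^{(\omega)}$, so this is a linear ODE with exactly the structure of the homogeneous Bloch equation (run backwards in time), and the Carath\'eodory existence theorem together with the Gronwall argument from the proof of \cref{thm:operatorProperties} apply verbatim; in particular $\mathbf\Psi_{u,j}\in C([0,T];\R^3)$.

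The core step is an integration by parts. For each $j$, since $\delta\mathbf M_\phi^{(\omega_j)}(0)=0$, the product rule for absolutely continuous functions gives
\begin{equation*}
\delta\mathbf M_\phi^{(\omega_j)}(T)\cdot y_j=\int_0^T\tfrac{\dd}{\dd t}\bigl(\delta\mathbf M_\phi^{(\omega_j)}(t)\cdot\mathbf\Psi_{u,j}(t)\bigr)\,\dd t.
\end{equation*}
Expanding the derivative and substituting \eqref{eq:BlochFrechetEquation} and the adjoint equation, the two contributions $B_u^{\omega_j}\delta\mathbf M_\phi^{(\omega_j)}\cdot\mathbf\Psi_{u,j}$ and $\delta\mathbf M_\phi^{(\omega_j)}\cdot B_u^{\omega_j}\mathbf\Psi_{u,j}$ cancel by the skew-symmetry of $B_u^{\omega_j}$, leaving only $\int_0^T(B_\phi^0(t)\mathbf M_u^{(\omega_j)}(t))\cdot\mathbf\Psi_{u,j}(t)\,\dd t$. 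Writing out $B_\phi^0$ (the matrix $B_\cdot^\omega$ with $\omega=0$, applied to $\phi$) and collecting the coefficients of $\phi_1$ and $\phi_2$ identifies this with $\int_0^T\phi(t)\cdot A_j(t)\mathbf\Psi_{u,j}(t)\,\dd t$, where $A_j(t)$ is the $2\times3$ matrix built from the components of $\mathbf M_u^{(\omega_j)}(t)$ appearing in the claimed formula. Summing over $j$ and invoking the Riesz identification $U^*=U$ then yields the asserted expression for $S'(u)^*y$; it lies in $U$ because $\mathbf M_u^{(\omega_j)},\mathbf\Psi_{u,j}\in L^\infty(\Omega;\R^3)$, so the integrand is bounded, hence in $L^2$.

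I expect the only genuinely delicate point to be the justification of the product rule and fundamental theorem of calculus in the integration-by-parts step, since $\delta\mathbf M_\phi^{(\omega_j)}$ and $\mathbf\Psi_{u,j}$ are only $W^{1,1}$ (Carath\'eodory) solutions rather than $C^1$; this is dispatched by the standard fact that a product of absolutely continuous functions is absolutely continuous with the expected derivative. The remaining work — verifying $B_u^\omega\mathbf v=\mathbf v\times\mathbf B_u^{(\omega)}$, the skew-symmetry cancellation, and the bookkeeping that turns $(B_\phi^0\mathbf M_u^{(\omega_j)})\cdot\mathbf\Psi_{u,j}$ into $\phi\cdot A_j\mathbf\Psi_{u,j}$ — is purely algebraic and routine.
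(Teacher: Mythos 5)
Your proof is correct and follows essentially the same route as the paper's: express $\langle y,S'(u)\phi\rangle$ via the linearized state, introduce $\mathbf\Psi_{u,j}$, integrate the product $\mathbf\Psi_{u,j}\cdot\delta\mathbf M_\phi^{(\omega_j)}$ by parts using $\delta\mathbf M_\phi^{(\omega_j)}(0)=0$, cancel the $B_u^{\omega_j}$ terms (the paper does this via the scalar triple product identity, you via skew-symmetry of $B_u^{\omega_j}$ — the same computation), and read off the coefficients of $\phi$ from $\mathbf\Psi_{u,j}\cdot B_\phi^0\mathbf M_u^{(\omega_j)}$. Your added remarks on well-posedness of the terminal-value problem and on the product rule for absolutely continuous functions are correct and only make explicit what the paper leaves implicit.
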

\begin{proof}
    From \cref{thm:operatorProperties} we have $S'(u)\phi=(\delta\mathbf M_\phi^{(\omega_1)}(T),\ldots,\delta\mathbf M_\phi^{(\omega_J)}(T))$ for any $u,\phi\in U$ with $\delta\mathbf M_\phi^{(\omega)}$ solving \eqref{eq:BlochFrechetEquation}.
    Thus we obtain for $y\in (\R^3)^J$ that
    \begin{equation*}
        \begin{aligned}[t]
            \int_\Omega \phi(t)\cdot(S'(u)^* y)(t)\,\dd t
            &=\scalprod{ y,S'(u)\phi}
            =\sum_{j=1}^J y_j^T\delta\mathbf M_\phi^{(\omega_j)}(T)\\
            &=\sum_{j=1}^J\mathbf{\Psi}_{u,j}(T)^T\delta\mathbf M_\phi^{(\omega_j)}(T)\\
            &=\sum_{j=1}^J\int_\Omega\mathbf{\Psi}_{u,j}(t)^T\frac{\dd}{\dd t} \delta\mathbf M_\phi^{(\omega_j)}(t)+\frac{\dd}{\dd t}\mathbf{\Psi}_{u,j}(t)^T\delta\mathbf M_\phi^{(\omega_j)}(t)\,\dd t\\
            &=\sum_{j=1}^J\int_\Omega\mathbf{\Psi}_{u,j}(t)^T\left[\frac{\dd}{\dd t} \delta\mathbf M_\phi^{(\omega_j)}(t)-\delta\mathbf M_\phi^{(\omega_j)}(t)\times\mathbf B_u^{\omega_j}(t)\right]\,\dd t\\
            & =\sum_{j=1}^J\int_\Omega\mathbf{\Psi}_{u,j}(t)^T\left[B_\phi^0(t)\mathbf M_u^{(\omega_j)}(t)\right]\,\dd t
        \end{aligned}
    \end{equation*}
    from which the result follows.
\end{proof}

\begin{proposition}\label{prop:blochAdjointContinuity}
    For any $u\in U$, we have $\ran S'(u)^*\hookrightarrow L^\infty(\Omega;\R^2)$.
    Moreover, $u\mapsto S'(u)^*$ is continuous in $L^\infty(\Omega;\R^2)$ under weak convergence of $u$ in $U$ and thus it satisfies \ref{enm:adjointConvergence} with $V=L^1(\Omega;\R^2)$.
\end{proposition}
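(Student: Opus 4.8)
The plan is to verify the two assertions separately, in both cases reducing everything to the explicit formula for $S'(u)^*$ from \cref{prop:bloch_adjoint} together with the $a$~priori bounds already established in \cref{thm:operatorProperties}. Recall that $(S'(u)^*y)(t)$ is a finite sum over $j=1,\ldots,J$ of products of a matrix built from the components of $\mathbf M_u^{(\omega_j)}(t)$ with the adjoint state $\mathbf\Psi_{u,j}(t)$, where $\mathbf\Psi_{u,j}$ solves the terminal-value problem $\frac{\dd}{\dd t}\mathbf\Psi_{u,j}=\mathbf\Psi_{u,j}\times\mathbf B_u^{(\omega_j)}$, $\mathbf\Psi_{u,j}(T)=y_j$. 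The key structural fact is that this adjoint equation has exactly the same skew-symmetric form as the forward Bloch equation (written as $\frac{\dd}{\dd t}\mathbf\Psi = -B_u^{\omega}\mathbf\Psi$ or equivalently with the cross product), so $|\mathbf\Psi_{u,j}(t)|_2 = |y_j|_2$ for all $t\in[0,T]$ by the same energy computation $\frac{\dd}{\dd t}|\mathbf\Psi_{u,j}|_2^2 = 2\,\mathbf\Psi_{u,j}\cdot(\mathbf\Psi_{u,j}\times\mathbf B_u^{(\omega_j)}) = 0$. Since also $|\mathbf M_u^{(\omega_j)}(t)|_2 = 1$, each summand in $(S'(u)^*y)(t)$ is pointwise bounded in norm by a constant times $|y_j|_2$, uniformly in $t$. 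Summing over $j$ gives $\|S'(u)^*y\|_{L^\infty(\Omega;\R^2)}\le C\,|y|$ with $C$ independent of $u$, which proves $\ran S'(u)^*\hookrightarrow L^\infty(\Omega;\R^2)$ together with a uniform operator bound.

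For the continuity under weak convergence, I would take $u_i\rightharpoonup u$ in $U$ and $y\in Y$ fixed, and show $\|[S'(u_i)^*-S'(u)^*]y\|_{L^\infty(\Omega;\R^2)}\to0$. Writing out the difference, it splits into terms involving $\mathbf M_{u_i}^{(\omega_j)}(t)-\mathbf M_u^{(\omega_j)}(t)$ (multiplied by a bounded adjoint state) and terms involving $\mathbf\Psi_{u_i,j}(t)-\mathbf\Psi_{u,j}(t)$ (multiplied by a bounded matrix). For the first family, I already know from the proof of \cref{thm:operatorProperties} that $\mathbf M_{u_i}^{(\omega)}\to\mathbf M_u^{(\omega)}$ in $C([0,T];\R^3)$ — the Gronwall estimate \eqref{eq:Gronwall} there gives precisely a uniform-in-$t$ bound that tends to zero. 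For the second family, the adjoint state $\mathbf\Psi_{u_i,j}$ satisfies an equation of the identical type (skew-symmetric coefficient $B_{u_i}^{\omega_j}$, fixed terminal datum $y_j$), so a completely analogous Gronwall argument — splitting $\frac{\dd}{\dd t}(\mathbf\Psi_{u_i,j}-\mathbf\Psi_{u,j})$ into a term with coefficient $B_{u_i}^{\omega_j}$ acting on the difference plus a forcing $(B_{u_i}^{\omega_j}-B_u^{\omega_j})\mathbf\Psi_{u,j}$ and integrating from $T$ backwards — yields uniform convergence $\mathbf\Psi_{u_i,j}\to\mathbf\Psi_{u,j}$ in $C([0,T];\R^3)$, using exactly the weak-to-strong mechanism (the antiderivative of a weakly-null, $L^\infty$-modulated integrand converges strongly) that was the crux of \cref{thm:operatorProperties}. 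Combining, each term in the difference tends to zero uniformly in $t$, hence in $L^\infty(\Omega;\R^2)$; since $J$ is finite this completes the argument. Finally, taking $V=L^1(\Omega;\R^2)$ we have $U=L^2(\Omega;\R^2)\hookrightarrow V$ and $V^*=L^\infty(\Omega;\R^2)$, so the statement just proved is literally \ref{enm:adjointConvergence}.

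The main obstacle, such as it is, is the second family of terms: one must recognize that the adjoint equation is of the \emph{same} structural type as the forward equation and that the Gronwall argument of \cref{thm:operatorProperties} — in particular the observation that $r\mapsto\int_0^r(B_{u_i}^{\omega}(s)-B_u^{\omega}(s))\,\mathbf v(s)\,\dd s$ converges to zero in $L^2$ whenever $\mathbf v\in L^\infty$ (via the Fubini trick with $\{s_1,s_2\le s_3\}$) — applies verbatim after reversing time and replacing the bounded modulating function $\mathbf M_u^{(\omega)}$ by $\mathbf\Psi_{u,j}$, which is itself bounded by $|y_j|$ by the energy identity. Once one sees that no new estimate is needed — only the transplantation of the forward estimate to the backward adjoint system — the proof is routine; I would keep the writeup short and explicitly cite \eqref{eq:Gronwall} rather than re-deriving it.
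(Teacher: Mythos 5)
Your overall architecture matches the paper's: reduce to uniform-in-$t$ convergence of $\mathbf M_{u_i}^{(\omega_j)}$ and $\mathbf\Psi_{u_i,j}$, observe that the adjoint equation has the same skew-symmetric structure so the forward argument transfers, and read off the $L^\infty$ bound from the energy identities $|\mathbf M_u^{(\omega_j)}(t)|_2=1$ and $|\mathbf\Psi_{u,j}(t)|_2=|y_j|_2$. The boundedness claim $\ran S'(u)^*\hookrightarrow L^\infty(\Omega;\R^2)$ is handled correctly and completely.

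There is, however, a genuine gap in the continuity part: you assert that the proof of \cref{thm:operatorProperties} already yields $\mathbf M_{u_i}^{(\omega)}\to\mathbf M_u^{(\omega)}$ in $C([0,T];\R^3)$ because \eqref{eq:Gronwall} ``gives a uniform-in-$t$ bound that tends to zero.'' It does not. In that proof the second term on the right of \eqref{eq:Gronwall} is indeed controlled uniformly in $t$ (it is bounded above by its value at $t=T$, which was shown to vanish), but the first term $\bigl|\int_0^t\Delta B_i(s)\,\dd s\bigr|_2$ was only shown to vanish for each \emph{fixed} $t$, by testing the weak convergence $\Delta B_i\rightharpoonup0$ against $\chi_{[0,t]}$; and the auxiliary functions $f_i(r)=\int_0^r\Delta B_i(s)\,\dd s$ were only shown to converge to zero in $L^2(\Omega)$, which does not imply $\sup_{t}|f_i(t)|\to0$. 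Upgrading this pointwise convergence to uniform convergence is exactly the new content of the paper's proof of this proposition: it invokes the Dunford--Pettis criterion to obtain equi-integrability of $\{\Delta B_i\}$ and runs a compactness/contradiction argument on near-maximizing times $t_i\to\hat t$. Your writeup needs some such step. A quick repair consistent with your approach: $|f_i(t)-f_i(t')|\le\|\Delta B_i\|_{L^2}\,|t-t'|^{1/2}$ with $\|\Delta B_i\|_{L^2}$ bounded, so the $f_i$ are equicontinuous, and equicontinuity together with pointwise convergence to zero gives uniform convergence by Arzel\`a--Ascoli. The remainder of your argument --- transplanting the estimate to the backward adjoint system with $\mathbf\Psi_{u,j}$ as the bounded modulating function, and identifying $V=L^1(\Omega;\R^2)$, $V^*=L^\infty(\Omega;\R^2)$ --- is sound.
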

\begin{proof}
    By the formula for $S'(u)^*$ from \cref{prop:bloch_adjoint}, it is enough to show that $\mathbf M_{u_i}^{(\omega_j)}$ and $\mathbf{\Psi}_{u_i,j}$ converge in $L^\infty(\Omega;\R^3)$ as $u_i\rightharpoonup u$ in $U$.
    It suffices to consider $\mathbf M_{u_i}^{(\omega_j)}$, since the adjoint variable $\mathbf{\Psi}_{u,j}$ satisfies the same differential equation.
    Thus, we only have to show that the right-hand side in \eqref{eq:Gronwall} converges to zero uniformly in $t$.
    Note that the second integral is bounded above by the one for $t=T$ which has already been shown to converge to zero. Hence it suffices to show $\int_0^t\Delta B_i(s)\,\dd s\to0$ uniformly in $t$ as $i\to\infty$.
    Since $\Delta B_i\rightharpoonup0$ in $L^2(\Omega;\R^3)$, we also have weak convergence in $L^1(\Omega;\R^3)$ so that by the Dunford--Pettis criterion the $\Delta B_i$ are equi-integrable.
    Now let $t_i\in[0,T]$ be such that $|\int_0^{t_i}\Delta B_i(s)\,\dd s\,|_2\geq\sup_{t\in[0,T]}|\int_0^{t}\Delta B_i(s)\,\dd s\,|_2-\frac1i$,
    and assume that for a subsequence (still indexed by $i$) we have $|\int_0^{t_i}\Delta B_i(s)\,\dd s\,|_2\geq C>0$ for all $i$.
    Upon taking another subsequence, we can further assume that $t_i\to\hat t\in[0,T]$.
    Due to the equi-integrability, there is a $\Delta t>0$ such that $\int_{\hat t-\Delta t}^{\hat t+\Delta t}|\Delta B_i(s)|_2\,\dd s<C/2$;
    thus for $i$ large enough we have $|\int_0^{\hat t}\Delta B_i(s)\,\dd s\,|_2\geq C/2$.
    However, this contradicts the weak convergence of $\Delta B_i$ to $0$ so that indeed $\int_0^t\Delta B_i(s)\,\dd s\to0$ uniformly in $t$ as\break $i\to\infty$.
\end{proof}

\bibliographystyle{jnsao}
\bibliography{multibang}

\end{document}